\documentclass{amsart}
\usepackage{amsmath, amssymb, amsthm, mathtools,xcolor}
\usepackage{graphicx}
\usepackage[colorlinks=true, linkcolor=red, citecolor=red, urlcolor=blue]{hyperref}
\usepackage{comment}
\usepackage{setspace}
\usepackage[top=1.5in, bottom=1.5in, left=1.3in, right=1.3in]{geometry}

\newtheorem{theorem}{Theorem}[section]
\newtheorem{lemma}[theorem]{Lemma}
\newtheorem{corollary}[theorem]{Corollary}
\newtheorem{proposition}[theorem]{Proposition}
\newtheorem{conjecture}[theorem]{Conjecture}
\newtheorem{definition}[theorem]{Definition}

\newtheorem{remark}[theorem]{Remark}

\theoremstyle{definition}

\subjclass[2020]{Primary  37P35, 37P05, 14G12; Seondary 11R32, 11C08, 37F10}
\keywords{Periodic points; Dynamical Mordell-Lang Conjecture; Arboreal Fields}

\numberwithin{equation}{section}

\title{Simultaneous Periods for Families of Rational Maps Modulo Primes}
\author{Bhawesh Mishra}
\address{Department of Mathematical Sciences, University of Memphis, Memphis, TN, 38152}
\email{bhaweshmishra2024@gmail.com}

\begin{document}

\begin{abstract}
Let \(K\) be a number field, and \(\varphi_{1},\ldots,\varphi_{g}\in K(t)\) be finitely many rational maps, each of degree at least \(2\). We first show that for generic finite sets \(\mathcal{A}_{1},\ldots,\mathcal{A}_{g}\) consisting entirely of points that are not \(\varphi_{i}\)-periodic, there exists a set of primes \(\mathfrak p\) of \(K\) of positive density such that for each \(\mathcal{A}_{i}\) and every \(\alpha\in\mathcal{A}_i\), \(\alpha\) is not \(\varphi_i\)-periodic modulo \(\mathfrak p\). The notion of genericity used here is defined in terms of the associated arboreal fields and is sharper than those previously used in the literature. Leveraging our proof in the generic case, we then show that the same conclusion holds for most \textit{expected} cases of non-generic sets \(\mathcal{A}_{i}\). Finally, we apply our result to confirm the dynamical Mordell--Lang conjecture for coordinate-wise actions of a class of maps that includes rational maps that are generic in this sense.
\end{abstract}

\maketitle

\section{Introduction}\label{sec:intro}
Throughout this article, \(K\) will denote a number field. Given a rational map \(\varphi:\mathbb{P}^{1}_{K}\longrightarrow\mathbb{P}^{1}_{K}\) of degree at least \(2\) and integer \(m\geq 1\), we will write \(\varphi^{m}\) for the \(m\)-th iterate of \(\varphi\) under composition. The \textit{forward orbit} of a point \(\alpha\in\mathbb{P}^{1}(K)\) under \(\varphi\) is the set \( \mathcal{O}^{+}(\alpha) := \{\varphi^{m}(\alpha):m\geq 0\}\). Similarly, the \textit{backward orbit} of \(\alpha\) is the set \(\mathcal{O}^{-}(\alpha) := \{\beta\in\mathbb{P}^{1}(\overline{K})\,:\,\varphi^{m}(\beta)=\alpha\text{ for some }m\geq 0\}\). We will say that \(\alpha\) is \textit{\(\varphi\)-periodic} if \(\varphi^{m}(\alpha)=\alpha\) for some \(m\geq 1\). More generally, we will say that \(\alpha\) is \textit{\(\varphi\)-preperiodic} if its forward orbit is finite and \(\alpha\) is \textit{exceptional} if its backward orbit is finite.

\color{black}
\subsection{Motivation}
Given two points \(\alpha,\beta\in\mathbb{P}^{1}(K)\) such that \(\beta\) is not \(\varphi\)-preperiodic and \(\alpha\) is not in the forward orbit of \(\beta\) under \(\varphi\), one might ask how many primes \(\mathfrak{p}\) of \(K\) there are such that \(\alpha\) \textit{is} in the forward orbit of \(\beta\) under \(\varphi\) modulo \(\mathfrak{p}\). The fact that there are infinitely many such \(\mathfrak{p}\) unless \(\alpha\) is exceptional for \(\varphi\) follows from \cite[Lemma 4.1]{etaleDML}. Then, an application of \cite[Theorem 2.2]{Sil93}) also shows that there are infinitely many \(\mathfrak{p}\) such that \(\alpha\) is \textit{not} in the forward orbit of \(\beta\) modulo \(\mathfrak{p}\). It is known that the set \(\mathcal{S}\) of primes \(\mathfrak{p}\) such that \(\alpha\) is in the forward orbit of \(\beta\) modulo \(\mathfrak{p}\) has density zero in some cases (see \cite{OdoniIterates} and \cite{Jon08} amongst others). However, there are cases when \(\mathcal{S}\) has positive density. More generally, one may expect such examples for \textit{exceptional maps} (see \cite{GTZ07}). In this context, the set of primes such that \(\alpha\) is not in the forward orbit of \(\beta\) always has positive density - the most general case of this result as established in \cite{BGHKST} which we state in Theorem \ref{thm:BGHKST}. 

Isolating the primes \(\mathfrak{p}\) such that \(\alpha\) is in the forward orbit of \(\beta\) has implications for what is known as the dynamical Mordell-Lang conjecture, which is a dynamic analogue to the classical cyclic-case of the Mordell-Lang conjecture established in \cite{Faltings, McQuillan, Vojta} \cite{GT09, GTZ08}. 
\begin{conjecture}{\textbf{(Dynamical Mordell-Lang Conjecture})}
Let \(\Phi\) be an endomorphism of a quasi-projective variety  \(X\) defined over \(\mathbb{C}\), \(V \subset X\) be a closed subvariety and \(x \in X(\mathbb{C})\). Then, the set of positive integers \(n\) such that \(\Phi^{n}(x) \in V(\mathbb{C})\) is a union of finitely many arithmetic progressions\(\{an+b\}_{n\in\mathbb{N}}\) with $a, b\geq 0$.
\label{con:DMLC}
\end{conjecture}

The conjecture \ref{con:DMLC} for \'etale maps was first established by Bell, Ghioca and Tucker in \cite{etaleDML}, and it was established for endomorphisms of \(\mathbb{A}^{2}\) by Xie in \cite{Xie17}. In this article however, we will focus on the case of Conjecture \ref{con:DMLC} when \(X = (\mathbb{P}^{1})^{g}\) and \(\Phi := (\varphi_{1}, \varphi_{2}, \ldots, \varphi_{g})\) will be the coordinate-wise action for rational maps \(\varphi_{i}\), which is known as the \textit{split} case. The following result, first established in \cite{BGKT12}, gives sufficient conditions for the conclusions of Conjecture \ref{con:DMLC} in terms of non-intersection of orbits with residue classes of attracting periodic point. 

\begin{proposition}[{\cite[Theorem~3.4]{BGKT12}}]
Let $V$ be a subvariety of $(\mathbb{P}^{1})^{g}$ defined over $\mathbb{C}_{p}$, let
$f_{1},\dots,f_{g} \in \mathbb{C}_{p}(t)$ be rational functions of good reduction on $\mathbb{P}^{1}$,
and let $\Phi$ denote the coordinatewise action of $(f_{1},\dots,f_{g})$ on
$(\mathbb{P}^{1})^{g}$. Let $\mathcal{O}$ be the $\Phi$-orbit of a point
\[
\alpha = (x_{1},\dots,x_{g}) \in (\mathbb{P}^{1}(\mathbb{C}_{p}))^{g},
\]
and suppose that for each $i$, the orbit $\mathcal{O}_{f_{i}}(x_{i})$ does not intersect
the residue class of any attracting $f_{i}$-periodic point. Then
$V(\mathbb{C}_{p}) \cap \mathcal{O}$ is a union of at most finitely many orbits of the form
\[
\{\Phi^{nk+\ell}(\alpha)\}_{n \ge 0}
\]
for nonnegative integers $k$ and $\ell$. \label{prop:orbitAP}
\end{proposition}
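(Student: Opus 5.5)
The plan is the standard $p$-adic interpolation argument of Bell--Ghioca--Tucker: for each coordinate, a suitable iterate of $f_i$ is parametrized on a residue class by a $p$-adic analytic function of the iteration index. Then $V\cap\mathcal{O}$ is cut out by zero sets of analytic functions on $\mathbb{Z}_p$, and Strassmann's theorem finishes.

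\emph{Step 1 (reduction to a periodic residue class).} Because each $f_i$ has good reduction, it induces a map $\bar f_i$ on $\mathbb{P}^1(\overline{\mathbb{F}}_p)$. The reductions of $x_i$ and of its iterates lie in a finitely generated residue field, hence in a finite field, so the reduced orbit of $x_i$ is preperiodic. After replacing $\alpha$ by $\Phi^{\ell_0}(\alpha)$ we may assume every $\bar x_i$ is $\bar f_i$-periodic. Finitely many initial terms only add finitely many singleton orbits, which are of the allowed form with $k=0$. Let $N$ be a common multiple of the periods, and split $\mathcal{O}$ into the $N$ suborbits $\{\Phi^{nN+\ell}(\alpha)\}_{n\ge0}$ with $0\le\ell<N$. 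For each such suborbit, $g_i:=f_i^N$ maps the residue class $D_i$ of $f_i^{\ell}(x_i)$ into itself.

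\emph{Step 2 (linearization on the residue class).} Choose a coordinate in which $D_i$ is the open unit disc. Since $f_i$ has good reduction, $g_i$ is given on $D_i$ by a power series with integral coefficients, $g_i(z)=a_0+a_1z+\cdots$ with $|a_0|<1$. The residue class contains a fixed point of $g_i$, and the hypothesis that the orbit avoids residue classes of attracting periodic points forces $|a_1|=1$. Otherwise $\bar g_i$ would be a constant-plus-higher-order map with nonunit derivative, and the fixed point in $D_i$ would be attracting. With $|g_i'|=1$ on $D_i$, $g_i$ is an analytic isometry of $D_i$. Passing to a further iterate $g_i^{M}$, the map becomes close enough to the identity in the appropriate norm, namely $|g_i^M(z)-z|\le|p|^{c}$ uniformly with $c>1/(p-1)$. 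This step is the main obstacle: one must handle the case where $a_1$ is a root of unity in the residue field, the wild ramification when $p$ is small, and the fact that the disc radius must be shrunk uniformly. I would follow Rivera-Letelier's (or Poonen's) interpolation lemma. It gives, for $z\in D_i$, a $p$-adic analytic map $G_i:\mathbb{Z}_p\to D_i$, convergent on $\mathbb{Z}_p$, with $G_i(n)=g_i^{Mn}(z)$ for all $n\ge0$. Subdividing the suborbits once more by residues mod $M$, we get for each fixed residue $r$ analytic functions $G_{i,r}$ with $G_{i,r}(n)=f_i^{NMn+r}(x_i)$.

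\emph{Step 3 (Strassmann).} Cover $V$ by finitely many affine charts and take defining polynomials $P_1,\dots,P_s$. After possibly changing coordinates by a $\mathrm{PGL}_2$ of good reduction, the points in the disc product lie in a single affine chart. Then $H_{j}(n):=P_j(G_{1,r}(n),\dots,G_{g,r}(n))$ is analytic on $\mathbb{Z}_p$, since the disc is bounded and the $P_j$ are polynomials. So $\{n: \Phi^{NMn+r}(\alpha)\in V\}=\bigcap_j H_j^{-1}(0)\cap\mathbb{N}$. By Strassmann's theorem, each $H_j$ is either identically zero or has finitely many zeros in $\mathbb{Z}_p$. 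Hence this set is either all of $\mathbb{N}$ (giving the single orbit $\{\Phi^{nNM+r}(\alpha)\}$) or finite (giving finitely many singleton orbits with $k=0$). Taking the union over the finitely many residues $r$ and the finitely many discarded initial terms proves the proposition.
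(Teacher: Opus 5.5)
The paper does not prove this proposition; it quotes it from \cite{BGKT12}. The argument there has the same architecture as yours. You reduce to periodic residue classes, use the non-attracting hypothesis to parametrize each coordinate orbit along arithmetic progressions by a $p$-adic analytic function (via Rivera-Letelier's analysis of indifferent residue classes), and finish with Strassmann's theorem. Your Step 1, your Step 3, the existence of a $g_i$-fixed point in $D_i$, and the deduction $|a_1|=1$ are all fine.

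The step that fails as written is the central claim of Step 2: that some iterate satisfies $|g_i^M(z)-z|\le|p|^c$ uniformly on $D_i$, possibly after shrinking the disc uniformly. In your coordinate, $g_i^M(z)-z$ is an integral power series. Its reduction is the expansion at $\bar 0$ of $\overline{g_i}^{\,M}(z)-z$, which is nonzero because $\overline{g_i}^{\,M}$ has degree at least $2$. So some coefficient is a unit, and $\sup_{z\in D_i}|g_i^M(z)-z|=1$ for every $M$. For the same reason no single $M$ works on $\{|z|\le r\}$ for all $r<1$, so the shrinking cannot be uniform. As stated, Step 2 therefore does not produce the hypothesis of Poonen's lemma. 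That lemma needs a self-map of a \emph{closed} disc which, rescaled to the closed unit disc, is $\equiv\mathrm{id}\pmod{p^c}$.

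The repair is pointwise.
\begin{itemize}
\item Let $m$ be the order of $\bar a_1$ in $\overline{\mathbb{F}}_p^{*}$ and put $h=g_i^m$. The same Weierstrass argument gives a fixed point $\zeta\in D_i$ of $h$, with $\lambda=h'(\zeta)\equiv a_1^m\equiv 1$.
\item For your point $z\in D_i$, put $r=|z-\zeta|<1$; if $r=0$ there is nothing to prove.
\item Expand $h(\zeta+u)=\zeta+\lambda u+\sum_{k\ge2}c_ku^k$ with $|c_k|\le 1$, and substitute $u=\rho w$ with $|\rho|=r$. The map $\tilde h(w)=\lambda w+\sum_{k\ge2}c_k\rho^{k-1}w^k$ is an integral restricted power series with $\|\tilde h-\mathrm{id}\|\le\max(|\lambda-1|,r)=:\delta<1$. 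The point $z$ corresponds to some $w_0$ with $|w_0|=1$.
\item The elementary estimate $\|\tilde h^{p}-\mathrm{id}\|\le\max(|p|\delta,\delta^{2})$ gives $\tilde h^{p^t}\equiv\mathrm{id}\pmod{p^c}$ for $t\gg0$.
\item Poonen's lemma then yields your $G_i$, namely $G_i(n)=\zeta+\rho F(n,w_0)$, interpolating $g_i^{Mn}(z)$ with $M=mp^t$.
\end{itemize}
This $M$ depends on $z$, which is harmless: you only need it for one point per coordinate and residue class, and you can pass to a common multiple. With Step 2 replaced by this, the rest of your argument goes through as written.
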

Readers may consult \cite{DMLBook} for a broad and accessible discussion on the use of \(p\)-adic analytic approach towards establishing the dynamical Mordell-Lang conjecture, and \cite{Amerik11} for analogous result for finite extensions of \(\mathbb{Q}_{p}\). Using the main result in our article, we establish the following case of dynamical Mordell-Lang conjecture for the coordinatewise action of linearly-disjoint system of rational maps (please see Definition \ref{def:linearly-disjoint-system} for definition of linearly-disjoint system).

\begin{theorem}\label{thm:DML}
Let $K$ be a number field, \(g \in\mathbb{N}\), \(V \subset (\mathbb{P}^{1})^{g}\) be a subvariety defined over \(K\), let \(x = (x_{1}, \ldots, x_{g})\in(\mathbb{P}^{1})^{g}(K)\). Moreover, let \(\Phi := (\varphi_{1}, \ldots, \varphi_{g})\) act on \((\mathbb{P}^{1})^{g}\) coordinatewise, where each \(\varphi_{i}\in K(t)\) is a rational function of degree \(\geq 2\). Furthermore, let \(\mathcal{S}_{i}\) be the set of critical points of \(\varphi_{i}\) and assume that the set \(\big\{(\varphi_{i}, \mathcal{S}_{i})\big\}_{i=1}^{g}\) forms an almost linearly-disjoint system.  Then, the set of integers $n \in \mathbb{N}$ such that $\Phi^{n}(x) \in V(\overline{K})$ is a union of finitely many arithmetic progressions $\{nk + \ell\}_{n \in \mathbb{N}}$, where $k,\ell \ge 0$ are nonnegative integers.  
\end{theorem}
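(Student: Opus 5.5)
The plan is to reduce Theorem \ref{thm:DML} to Proposition \ref{prop:orbitAP} by finding one prime \(\mathfrak p\) of \(K\) at which the hypotheses of that proposition hold. We embed \(K_{\mathfrak p}\hookrightarrow\mathbb{C}_p\), with \(p\) the rational prime below \(\mathfrak p\), and apply the proposition to \(V\), \(\Phi\) and \(x\).

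\textbf{Preliminary reductions.} If some \(x_i\) is \(\varphi_i\)-preperiodic, replace \(x\) by \(\Phi^{N}(x)\) for large \(N\) so that each such coordinate becomes periodic. After passing to an iterate \(\Phi^{M}\) and splitting \(\mathbb{N}\) into residue classes mod \(M\), we may assume these coordinates are fixed. We can then project away those coordinates: restrict \(V\) to the fibre over the fixed point and work in a lower-dimensional product. Finitely many shifts and iterates preserve the conclusion "finite union of arithmetic progressions", so we may assume no \(x_i\) is preperiodic. We will need to check that the almost linearly-disjoint hypothesis for \(\{(\varphi_i,\mathcal S_i)\}\) survives passing to iterates. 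This should hold because the critical points of \(\varphi_i^M\) lie in the backward orbits of \(\mathcal S_i\), so the arboreal fields involved are the same.

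\textbf{Choosing the prime.} We want \(\mathfrak p\) such that:
\begin{enumerate}
\item each \(\varphi_i\) has good reduction at \(\mathfrak p\) and \(x_i\) is \(\mathfrak p\)-integral;
\item for each \(i\), no critical point of \(\varphi_i\) (over \(\overline K\)) reduces to a periodic point of \(\overline{\varphi_i}\) modulo \(\mathfrak p\).
\end{enumerate}
Condition (1) excludes only finitely many primes. For condition (2), recall that an attracting periodic point \(\gamma\) of a good-reduction map in \(\mathbb{C}_p\) has \(|(\varphi^n)'(\gamma)|<1\). This forces the reduction of some point of the cycle to be a critical point of \(\overline{\varphi}\). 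Under good reduction the critical points of \(\overline{\varphi}\) are the reductions of critical points of \(\varphi\), away from finitely many primes where they might collide, which we also exclude. Hence if no critical point reduces to a periodic residue class, there are no attracting periodic points at all, and the orbit hypothesis of Proposition \ref{prop:orbitAP} holds vacuously. This matches the way the critical sets \(\mathcal S_i\) enter the statement.

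To get condition (2), we invoke the paper's main theorem on simultaneous non-periodicity modulo primes in the almost linearly-disjoint setting, applied with \(\mathcal A_i=\mathcal S_i\) (enlarging \(K\) to contain the critical points if necessary). It gives a positive-density set of primes \(\mathfrak p\) such that, for all \(i\) at once, no \(\alpha\in\mathcal S_i\) is \(\varphi_i\)-periodic modulo \(\mathfrak p\). Any such \(\mathfrak p\) outside the finite bad set works.

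\textbf{Main obstacle.} The main theorem presumably requires that the points of \(\mathcal A_i\) are not \(\varphi_i\)-periodic in characteristic \(0\), but a critical point may be periodic, e.g.\ for polynomials, where \(\infty\) is a superattracting fixed critical point. I expect the "almost" in almost linearly-disjoint to be exactly the provision handling such finitely many genuinely periodic critical points. These give attracting cycles that exist globally, and Proposition \ref{prop:orbitAP} requires that the orbit of \(x_i\) avoid their residue classes. I would try first to handle this as follows:
\begin{enumerate}
\item Choose \(\mathfrak p\) so that the reduction of \(x_i\) is itself not preperiodic to those residue cycles. This is a further positive-density (Chebotarev-type) condition, compatible with the linear disjointness, because \(x_i\) is not preperiodic and the relevant arboreal field of the cycle's backward orbit is again linearly disjoint.
\item Alternatively, change coordinates in \(\mathbb{C}_p\) so that the exceptional attracting cycle, e.g.\ \(\infty\), is treated separately. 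By a standard argument, an orbit entering a superattracting basin stays there, and DML along it follows from a \(p\)-adic analysis via Böttcher coordinates.
\end{enumerate}
Securing simultaneous conditions on all coordinates at a single prime is where the linear disjointness is used crucially.
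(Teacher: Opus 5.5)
Your route is valid, and even more economical than the paper's, when no critical point of any \(\varphi_i\) is \(\varphi_i\)-periodic in characteristic \(0\). In that case Theorem \ref{thm:multimultiperiodic}, applied after enlarging \(K\) to contain the critical points, gives a prime of good reduction with \(\operatorname{char} k_{\mathfrak p}>\deg\varphi_i\) at which no critical point is periodic modulo \(\mathfrak p\). The critical points of the reduction are then exactly the reductions of those of \(\varphi_i\), so your chain-rule argument shows there are no attracting cycles in \(\mathbb{P}^1(\mathbb{C}_p)\), and Proposition \ref{prop:orbitAP} applies vacuously for every starting point \(x\).

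The gap is the complementary case. Nothing in the hypothesis forces the critical points to be non-periodic; a critical point in a non-exceptional superattracting cycle is compatible with it. Your reading of ``almost'' is also mistaken: it only means that linear disjointness is required over some finite extension \(L\supseteq E\) (allowed to depend on \(m\)) instead of over \(K\). It makes no provision for periodic critical points. If some \(c\in\mathcal S_i\) is \(\varphi_i\)-periodic, its reduction is a periodic critical point of \(\varphi_{i,\mathfrak p}\) at every good prime. So attracting cycles exist at every prime, your strategy of excluding them cannot succeed, and Theorem \ref{thm:multimultiperiodic} does not apply to all of \(\mathcal S_i\).

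Your fallback (1) names the right target but only asserts it. You would need, at one and the same prime, both non-periodicity of the non-periodic critical points and avoidance by the orbit of \(x_i\) of the residue classes of the genuine critical cycles. Two positive-density sets of primes need not intersect, and the hypotheses give no disjointness statement for an arboreal field attached to the cycle. Fallback (2) is not an argument: inside an attracting basin the \(p\)-adic interpolation underlying Proposition \ref{prop:orbitAP} is precisely what fails, and nothing is said about coupling such a coordinate with the others.

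The missing idea is to aim the prime at the orbit of \(x_i\) rather than at the critical points themselves. After your reduction to non-preperiodic \(x_i\), apply Theorem \ref{thm:main-intro} with \(\mathcal A_i=\mathcal S_i\) and \(\mathcal T_i=\{x_i\}\). It needs almost linear disjointness only for the non-preperiodic critical points, which is inherited from the hypothesis by passing to subsets. Its conclusion nevertheless covers every point of \(\mathcal A_i\), periodic critical points included; those are handled by the reduction to fixed points and the ramification-index condition of Proposition \ref{prop:same-ram-index}, not by any disjointness. It produces \(M\) and a positive-density set of good primes with \(\operatorname{char}k_{\mathfrak p}>\deg\varphi_i\) at which \(\varphi_i^m(x_i)\), \(m\ge M\), never reduces to a critical point.

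By your own chain-rule observation, the residue classes of any attracting cycle contain a critical residue class. So if some \(\varphi_i^m(x_i)\) with \(m\ge M\) entered the residue class of an attracting periodic point, the orbit would afterwards pass through a critical residue class, a contradiction. Hence Proposition \ref{prop:orbitAP} applies to \(\Phi^M(x)\). This is the paper's proof.

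It also makes your check that the hypothesis survives passage to iterates unnecessary, and the reason you give for it does not establish it anyway. Attracting cycles of \(\varphi_i^M\) coincide with those of \(\varphi_i\), so the prime can be chosen for the original maps.
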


We note that a result analogous to Theorem \ref{thm:DML} for \textit{generic} polynomial maps \(f(x)\in K[x]\) of degree \(d \geq 2\) was established in \cite{Juul}, where the notion of genericity is that the critical points, and constant term, of polynomials lie in a Zariski-dense subset of the appropriate parameter space. This assumption in \cite{Juul} allows one to conclude that the Galois group of the splitting field of $(f^{\prime}\circ f)$ is isomorphic to the direct product of \((d-1)\) copies of $n$-fold iterated wreath product of \(S_{d}\). The notion of genericity used in this article (called \textit{almost linear-disjointness} and defined in \ref{def:linearly-disjoint-system}) does not involve any such assumption on Galois groups. Similarly, a more general case of the dynamical Mordell-Lang conjecture for generic endomorphisms of \(\mathbb{P}_{M}^{n}\), where \(M\) is an algebraically closed field of characteristic zero, was established by Fakhruddin in \cite{Fak14}. In \cite{Fak14}, the notion of genericity is based on avoidance of countably many proper subvarieties in the moduli space of morphisms of \(\mathbb{P}_{M}^{n}\) to itself. As promised earlier in the introduction, we will deal with a more general problem and then establish Theorem \ref{thm:DML} as a consequence. In this context, consider the following result that was first established in \cite{BGHKST}.

\begin{theorem}\label{thm:BGHKST}
Let \(K\) be a number field, and let \(\varphi_{1},\ldots,\varphi_{g}:\mathbb{P}_{K}^{1}\longrightarrow\mathbb{P}_{K}^ {1}\) be rational maps of degree at least 2. Let \({\mathcal{A}}_{1},\ldots,{\mathcal{A}}_{g}\) be finite subsets of \(\mathbb{P}^{1}(K)\) such that at most one set \({\mathcal{A}}_{i}\) contains a point that is not \(\varphi_{i}\)-preperiodic, and there is at most one such point in that set \({\mathcal{A}}_{i}\). Let \({\mathcal{T}}_{1},\ldots,{\mathcal{T}}_{g}\) be finite subsets of \(\mathbb{P}^{1}(K)\) such that no \({\mathcal{T}}_{i}\) contains any \(\varphi_{i}\)-preperiodic points. Then there is a positive integer \(M\) and a set of primes \({\mathcal{P}}\) of \(K\) having positive density such that for any \(i=1,\ldots,g\), any \(\gamma\in{\mathcal{T}}_{i}\), any \(\alpha\in{\mathcal{A}}_{i}\), any \(\mathfrak{p}\in{\mathcal{P}}\), and any \(m\geq M\),
\[
\varphi_{i}^{m}(\gamma)\not\equiv\alpha\pmod{\mathfrak{p}}.
\]
\end{theorem}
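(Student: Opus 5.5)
The plan follows the Galois-theoretic strategy of Odoni and Jones. We translate ``$\varphi_i^m(\gamma)\not\equiv\alpha \pmod{\mathfrak p}$ for all $m\ge M$'' into a Frobenius condition on the arboreal extensions of the $\alpha$'s, and then produce a positive density of primes via Chebotarev. Throughout, discard the finitely many primes of bad reduction for the $\varphi_i$, the primes where points of $\mathcal{A}_i\cup\mathcal{T}_i$ collide modulo $\mathfrak p$, and the primes ramified in the finite extensions used below.

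\textbf{Step 1: the preperiodic points.} Take $\alpha\in\mathcal{A}_i$ preperiodic. Its forward orbit is a finite set $Z_i\subset\mathbb{P}^1(K)$. Since no $\gamma\in\mathcal{T}_i$ is preperiodic, a height argument gives that no $\varphi_i^m(\gamma)$ lies in $Z_i$. However, $\varphi_i^m(\gamma)\equiv z\pmod{\mathfrak p}$ for some $z\in Z_i$ can still happen modulo $\mathfrak p$. To handle this, pass to the preimage trees of the periodic cycles in $Z_i$. The condition $\varphi_i^m(\gamma)\equiv\alpha$ for large $m$ forces $\gamma$ to be congruent to a point of $\varphi_i^{-n}(Z_i)$ for all large $n$. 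The key idea, as in Jones' and BGHKST's work, is to find a level $n$ and a Frobenius element that acts on $\varphi_i^{-n}(z)$ with no fixed points; for $z$ not exceptional this uses a fixed-point-free element. Then $\gamma$ has no $K_{\mathfrak p}$-rational point above $z$ at level $n$. A simpler alternative reduction for preperiodic $\alpha$ is the following. Modulo $\mathfrak p$ the orbit of $\gamma$ is eventually periodic, so it suffices that no iterate $\varphi_i^{m}(\gamma)$ with $m\ge M$ meets $\alpha$. Since $\alpha$ is preperiodic with finite orbit, we reduce to: $\gamma$ does not land in the finite set of strictly preperiodic tails of $\alpha$'s orbit. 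This is a finite condition holding outside the finitely many primes where the reductions of those tail points coincide with periodic points, or where $\gamma$'s orbit enters them. That last clause is exactly what needs the Frobenius argument.

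\textbf{Step 2: the single wandering point.} For the unique non-preperiodic $\alpha_0\in\mathcal{A}_{i_0}$, consider the arboreal fields $K_n=K(\varphi_{i_0}^{-n}(\alpha_0))$. By an Odoni-type argument, using that $\alpha_0$ is not exceptional (after discarding a finite exceptional case), there is some $n$ and some $\sigma\in\mathrm{Gal}(K_n/K)$ fixing no point of $\varphi_{i_0}^{-n}(\alpha_0)$. For $\mathfrak p$ with Frobenius conjugate to $\sigma$, the reduction $\bar\alpha_0$ has no $k_{\mathfrak p}$-rational $n$-th preimage. Hence $\varphi_{i_0}^m(\gamma)\not\equiv\alpha_0$ for every $m\ge n$ and every $K$-rational $\gamma$, since $\varphi_{i_0}^{m-n}(\gamma)$ would be such a preimage. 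So $M=n$ suffices for this index.

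\textbf{Step 3: simultaneity.} We need one set of primes of positive density satisfying the Frobenius conditions for all $i$ at once. Form the compositum $L$ of the finitely many Galois extensions used in Steps 1--2. The main obstacle is showing that the desired Frobenius conditions are compatible in $\mathrm{Gal}(L/K)$, which need not be a product.

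This is exactly why the hypothesis allows only one wandering point. For preperiodic $\alpha$, the conditions can be arranged to depend only on a tower over periodic cycles. In that tower, any element acting trivially on a suitable finite base level can be lifted to act without fixed points high up, because the tree over a periodic point is self-similar. I would first try to show this: choose $\sigma$ for $\alpha_0$ in $\mathrm{Gal}(L/K)$, restrict it to the preperiodic pieces, and then modify it by elements of the kernel of restriction to $K_n$. These elements act on deeper levels of the periodic trees, and one would argue they can still kill the relevant fixed points. Chebotarev applied to the resulting conjugacy class in $\mathrm{Gal}(L/K)$ then yields the set $\mathcal{P}$ of positive density.
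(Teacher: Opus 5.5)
Your overall frame (Frobenius conditions, Chebotarev, discarding finitely many primes) is right. The deduction at the end of Step~2 is also correct: no $k_{\mathfrak p}$-rational point of $\varphi_{i_0}^{-n}(\alpha_0)$ forces $\varphi_{i_0}^m(\gamma)\not\equiv\alpha_0$ for all $m\ge n$. But each existence claim that carries weight is unproved, and Step~3 is a hope rather than an argument.

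\textbf{Step 2.} No ``Odoni-type'' computation produces a derangement in $\mathrm{Gal}(K_n/K)$ for an arbitrary $\varphi$ and $\alpha_0$. The absence of $K$-rational preimages does not give one: the Galois group of $(x^2-2)(x^2-3)(x^2-6)$, acting on its roots, has no derangement. The existence of such an element is the core difficulty of the theorem. It is obtained from a prime $\mathfrak p$ at which $\alpha_0$ is not periodic modulo $\mathfrak p$ (the Roth/Silverman input of \cite[Lemma 4.3]{BGKT12}), followed by the $\mathfrak p$-adic analysis of Lemma~\ref{lem:nonper-modp}.

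\textbf{Step 1.} This step aims at the wrong set. If $z$ is periodic, $\varphi_i^{-n}(z)$ always contains a $K$-rational point of its cycle, so no Galois element acts on it without fixed points. Moreover, the primes at which the orbit of $\gamma$ meets $\bar z$ are in general infinite in number. The right objects are the first-entry preimages $\beta$, with $\varphi_i^m(\beta)\in\mathcal A_i$ but $\varphi_i^t(\beta)\notin\mathcal A_i$ for $t<m$, taken after making the preperiodic points fixed. Controlling their reductions requires the ramification index of $\varphi_i$ at each fixed $\alpha$ to be preserved modulo $\mathfrak p$, so that no other preimage of $\alpha$ reduces to $\bar\alpha$.

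\textbf{Step 3.} Self-similarity is a property of $\mathrm{Aut}(T)$, not of the Galois group. Nothing guarantees that the kernel of restriction to $K_n$ contains elements moving all the relevant deep vertices. Finding them is the original problem again, now entangled across several maps.

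\textbf{The missing idea.} The argument of \cite{BGHKST}, which the paper reproduces in Section~\ref{sec:proof-main-intro}, makes compatibility automatic. Fix one prime $\tilde{\mathfrak p}$ above a prime $\mathfrak p$ with the following properties:
\begin{enumerate}
\item every $\varphi_i$ has good reduction at $\mathfrak p$;
\item $\operatorname{char}k_{\mathfrak p}>\deg\varphi_i$;
\item the ramification indices at the fixed points are preserved;
\item $\alpha_0$ is not periodic modulo $\mathfrak p$.
\end{enumerate}
The first three hold for all but finitely many $\mathfrak p$, and the fourth is supplied by \cite[Lemma 4.3]{BGKT12}.

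Apply Proposition~\ref{prop:same-ram-index} to each $(\varphi_i,\mathcal A_i)$ with this same $\tilde{\mathfrak p}$, and let $L$ be the compositum of the resulting fields. Then for all large $m$, and every first-entry level-$m$ preimage $\beta$ of every $\mathcal A_i$, $\tilde{\mathfrak p}$ is unramified in $L(\beta)/L$ with residue degree $>1$. Hence the \emph{single} Frobenius of $\tilde{\mathfrak p}$ in the Galois closure $M$ of all these $\beta$ over $L$ fixes none of them, whatever the structure of $\mathrm{Gal}(M/L)$. The proof finishes with three steps: apply Chebotarev to this conjugacy class; observe that an orbit entering $\mathcal A_i$ modulo such a prime at time $\ge m$ must already have entered before time $m$; and remove finitely many primes.

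This also locates the real role of ``one wandering point''. One needs a single prime at which every wandering point is non-periodic. For several points this is not known in general, and it is exactly what Theorem~\ref{thm:multimultiperiodic} supplies under almost linear disjointness.
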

The following local-global principle regarding periodic points of rational maps is a consequence of Theorem \ref{thm:BGHKST}. 

\begin{proposition}\label{cor:nonper-red}
Let \(K\) be a number field, let \(\varphi:\mathbb{P}^{1}_{K}\longrightarrow\mathbb{P}^{1}_{K}\) be a rational function of degree at least 2, and let \(\alpha\in\mathbb{P}^{1}(K)\) be a non-periodic point of \(\varphi\). Then there is a positive density set of primes \(\mathfrak{p}\) of \(K\) at which \(\varphi\) has good reduction \(\varphi_{\mathfrak{p}}\) and such that the reduction of \(\alpha\) modulo \(\mathfrak{p}\) is not \(\varphi_{\mathfrak{p}}\)-periodic.
\end{proposition}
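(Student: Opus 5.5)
We derive Proposition \ref{cor:nonper-red} from Theorem \ref{thm:BGHKST} with \(g=1\), choosing \(\mathcal{A}_1\) and \(\mathcal{T}_1\) so that ``\(\alpha\) periodic modulo \(\mathfrak p\)'' forces a forbidden congruence. We split according to whether \(\alpha\) is \(\varphi\)-preperiodic.

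\emph{Case 1: \(\alpha\) is strictly preperiodic.} Then the orbit of \(\alpha\) is finite. Let \(\beta_0,\ldots,\beta_{r-1}\) be its periodic cycle, and let \(\gamma=\varphi^{j}(\alpha)\) be the last non-periodic point of the orbit. Then \(\varphi(\gamma)=\beta_0\), but \(\gamma\neq\beta_{r-1}\), the periodic preimage of \(\beta_0\). All of these points lie in \(K\). Choose \(\mathfrak p\) outside the finite set where \(\varphi\) has bad reduction or where any two of the finitely many orbit points collide modulo \(\mathfrak p\). For such \(\mathfrak p\), the reduced orbit of \(\alpha\) has the same shape as the orbit over \(K\): a tail followed by a cycle of length \(r\) with \(\alpha\) in the tail. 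So \(\alpha\) is not periodic modulo \(\mathfrak p\). This holds for all but finitely many primes, which gives density \(1\), and Theorem \ref{thm:BGHKST} is not needed.

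\emph{Case 2: \(\alpha\) is not preperiodic.} Here we want \(\mathcal{A}_1\) and \(\mathcal{T}_1\) with \(\mathcal{T}_1\) free of preperiodic points. The obstacle is to detect the return \(\varphi^{n}(\alpha)\equiv\alpha\) for all \(n\geq 1\), while Theorem \ref{thm:BGHKST} only controls \(m\geq M\). Take \(\mathcal{T}_1=\{\alpha\}\) and \(\mathcal{A}_1=\{\alpha\}\); the hypothesis allows one non-preperiodic point in a single \(\mathcal{A}_i\). Theorem \ref{thm:BGHKST} gives \(M\) and a positive-density set \(\mathcal{P}\) with \(\varphi^{m}(\alpha)\not\equiv\alpha \pmod{\mathfrak p}\) for all \(m\geq M\) and \(\mathfrak p\in\mathcal P\). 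Now suppose \(\alpha\) were periodic modulo \(\mathfrak p\), with some period \(n\geq 1\). Then \(\varphi^{kn}(\alpha)\equiv\alpha\) for every \(k\geq 1\), and taking \(kn\geq M\) gives a contradiction. Hence \(\alpha\) is non-periodic modulo every \(\mathfrak p\in\mathcal P\).

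Removing the finitely many primes of bad reduction from \(\mathcal{P}\), and in Case 1 the finitely many collision primes, does not affect positive density. The only step needing care is the reduction argument in Case 2: the positive-density set from Theorem \ref{thm:BGHKST} already excludes large \(m\), and periodicity propagates to arbitrarily large multiples of the period. One should also check that reduction modulo \(\mathfrak p\) of points of \(\mathbb P^1(K)\) commutes with \(\varphi\) at primes of good reduction, which is standard.
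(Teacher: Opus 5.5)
Your proof is correct and follows the paper's route. The paper gives no argument beyond saying the proposition is a consequence of Theorem~\ref{thm:BGHKST}, and your choice \(\mathcal{A}_1=\mathcal{T}_1=\{\alpha\}\) for wandering \(\alpha\), combined with the observation that periodicity modulo \(\mathfrak p\) recurs at arbitrarily large multiples of the period, supplies exactly that derivation; your separate treatment of strictly preperiodic \(\alpha\), which is needed because such \(\alpha\) cannot be placed in \(\mathcal{T}_1\), is also right and uses the same finite-orbit reasoning as Remark~\ref{rem:exceptional-point}.
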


In \cite[Section 6, pp.658]{BGHKST}, the authors asked whether the restriction in Theorem \ref{thm:BGHKST}, that at most one set $\mathcal{A}_{i}$ contains at most one point that is not \(\varphi_{i}\)-preperiodic, can be relaxed (see \cite[Section 6, Question 6.1]{BGHKST}). Even for $g = 1$ and \(\mathcal{A}_{1} = \{\alpha_{1}, \alpha_{2}\}\) the methods in \cite{BGHKST} do not yield the desired conclusion, unless \(\mathcal{A}_{1} = \{\alpha_{1}, \alpha_{2}\}\) satisfies one of the following special arrangements:
\begin{enumerate}
    \item Exactly one of the element of \(\mathcal{A}_{1}\) is pre-periodic as in Theorem \ref{thm:BGHKST}.

    \item One element of \(\mathcal{A}_{1}\) lies in the forward orbit of the other. 
\end{enumerate}
This limitation stems from the fact that the Theorem \ref{thm:BGHKST} in \cite{BGHKST} employs \cite[Lemma 4.3]{BGKT12} which itself relies on Roth's theorem and results in \cite{Sil93}. It is not obvious how to extend the methods in either of those articles when \(\mathcal{A}_{1}\) does not satisfy any of the conditions above. One case in which the answer is known in slightly more generality is where \(\varphi(x) = x^{2} + c\) for \(c\not\in\{0, -1\}\) and \(\alpha_{1} = 0\) (see \cite[Section 6, pp. 658]{BGHKST}). In this article, we establish a multi-element version of Hasse-principle for periodic points, under the assumption that the fields generated by the iterated pre-images are eventually linearly disjoint over some sub-field. Please see \ref{def:linearly-disjoint-system} and \ref{def:good-red} for pertinent definitions of almost \(\varphi\)-disjoint and of good reduction.

\begin{theorem}\label{thm:multiperiodic}
Let $K$ be a number field, \(\varphi: \mathbb{P}^{1}_{K} \longrightarrow \mathbb{P}^{1}_{K}\) be a rational function of degree at least $2$. Let $\alpha_{1}, \alpha_{2}, \ldots, \alpha_{k}$ be finitely many non-periodic points of \(\varphi\) such that \(\{\alpha_{i}\}_{i=1}^{k}\) is almost \(\varphi\)-linearly disjoint. Then, there exists a positive density subset of primes $\mathfrak{p}$ of $K$ at which \(\varphi\) has good reduction \(\varphi_{\mathfrak{p}}\) at \(\mathfrak{p}\) such that \(\alpha_{i}\) modulo \(\mathfrak{p}\) is not \(\varphi_{\mathfrak{p}}\)-periodic for any \(1 \leq i \leq k\). 
\end{theorem}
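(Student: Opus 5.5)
We use the standard Galois-theoretic criterion for periodicity modulo primes. Fix \(\alpha\) non-periodic. If \(\alpha\) is \(\varphi_{\mathfrak p}\)-periodic of period \(m\), then \(\alpha \equiv \varphi^{m}(\varphi^{n m - n}(\alpha))\) for suitable multiples. More usefully, for every \(n\) the point \(\alpha\) has a preimage under \(\varphi_{\mathfrak p}^{n}\) in the residue field \(k_{\mathfrak p}\), namely a point in its own periodic cycle. Hence, at a prime of good reduction \(\mathfrak p\) (unramified in the arboreal field \(K_n(\alpha):=K(\varphi^{-n}(\alpha))\) and with \(\alpha\) not meeting the ramification locus modulo \(\mathfrak p\)), periodicity of \(\alpha\) mod \(\mathfrak p\) forces the Frobenius at \(\mathfrak p\) in \(G_n(\alpha)=\mathrm{Gal}(K_n(\alpha)/K)\) to fix at least one point of \(\varphi^{-n}(\alpha)\). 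By Dedekind–Kummer, a root mod \(\mathfrak p\) of the degree-\(d^n\) equation lifts to a \(K_{\mathfrak p}\)-rational preimage away from finitely many primes. So it suffices to find \(n\) and a set of primes of positive density whose Frobenius, simultaneously for each \(i\), acts fixed-point-freely on \(\varphi^{-n}(\alpha_i)\).

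\textbf{Step 1 (single point).} For each \(i\) we produce \(n_i\) and an element \(\sigma_i\in G_{n}(\alpha_i)\), for all \(n\ge n_i\), with no fixed point on \(\varphi^{-n}(\alpha_i)\). The elements \(\sigma_i\) should moreover be trivial on the base subfield \(F\) over which the arboreal fields are (almost) linearly disjoint. We get this from Proposition \ref{cor:nonper-red} combined with Chebotarev. That result gives a positive density set of primes at which \(\alpha_i\) is non-periodic. For such \(\mathfrak p\), the element \(\alpha_i\) is not in the image of \(\varphi_{\mathfrak p}^{n}\) restricted to \(\mathbb P^1(k_{\mathfrak p})\) for large \(n\): the eventual image of \(\varphi_{\mathfrak p}^n\) on a finite set is exactly the set of periodic points. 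Thus for \(n\) at least the maximal preperiodic tail length, Frobenius has no fixed point on \(\varphi^{-n}(\alpha_i)\). The tail lengths could be unbounded over \(\mathfrak p\). To handle this, we partition the positive-density set by tail length, so some fixed \(n\) captures positive density. Next, we intersect with primes splitting completely in \(F\); this is harmless after replacing \(K\) by \(F\) and rerunning Proposition \ref{cor:nonper-red} over \(F\) with \(\varphi\) defined there. This yields \(\sigma_i\in\mathrm{Gal}(FK_n(\alpha_i)/F)\) that is fixed-point-free.

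\textbf{Step 2 (gluing via linear disjointness).} By the almost \(\varphi\)-linear disjointness hypothesis (Definition \ref{def:linearly-disjoint-system}), for \(n\) large the fields \(FK_n(\alpha_1),\dots,FK_n(\alpha_k)\) are linearly disjoint over \(F\). Hence
\[
\mathrm{Gal}\bigl(FK_n(\alpha_1)\cdots FK_n(\alpha_k)/F\bigr)\cong\prod_{i=1}^{k}\mathrm{Gal}\bigl(FK_n(\alpha_i)/F\bigr).
\]
Take \(N=\max n_i\), and lift each \(\sigma_i\) to level \(N\). A lift of a fixed-point-free element remains fixed-point-free, since a fixed point at level \(N\) maps under \(\varphi^{N-n_i}\) to a fixed point at level \(n_i\). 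The tuple \((\sigma_1,\dots,\sigma_k)\) is then an element of the compositum's Galois group. By Chebotarev, the primes of \(F\) whose Frobenius lies in its conjugacy class have positive density. Their restrictions to \(K\), which have degree one over \(K\) up to density-zero exceptions, also have positive density. We then remove the finitely many bad primes (bad reduction, ramification, and the Hensel exceptions). At the remaining primes no \(\alpha_i\) has a \(k_{\mathfrak p}\)-rational \(N\)-th preimage, so no \(\alpha_i\) is periodic modulo \(\mathfrak p\).

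\textbf{Main obstacle.} The delicate point is Step 1's uniformity: choosing a single level \(n\) together with a Frobenius class that is fixed-point-free and compatible with the base field \(F\) of disjointness. This requires controlling the tail lengths and ensuring positive density survives the restriction to primes split in \(F\). If descending to \(F\) breaks Proposition \ref{cor:nonper-red}, the fallback is to argue directly with conjugacy classes in \(\mathrm{Gal}(K_n(\alpha_i)/K)\). In that approach we pick a coset of \(\mathrm{Gal}(\cdot/F)\) common to all \(i\), and replace the product decomposition by a fibered product over \(\mathrm{Gal}(F/K)\).
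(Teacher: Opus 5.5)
Your architecture (a fixed-point-free element in each $\mathrm{Gal}(F_n(\varphi,\alpha_i)/F)$, a product element via linear disjointness, Chebotarev, descent to $K$) is the paper's, namely Propositions \ref{prop:derangement-in-large-enough-extensions}, \ref{prop:derangement-finite-set} and Section \ref{sec:Proofs-Main-Results}. However, Step 2 relies on a hypothesis you do not have.

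You take one base field $F$ over which $FK_n(\alpha_1),\dots,FK_n(\alpha_k)$ are linearly disjoint for all large $n$. Definition \ref{def:linearly-disjoint-system} only supplies, for arbitrarily large $m$, a base field $L^{(m)}$ that depends on $m$. In the paper's example $\varphi=x^2$ it is $\mathbb{Q}(\zeta_{2^m})$, and over any fixed number field $F$ those arboreal fields share $F(\zeta_{2^m})\neq F$ once $m$ is large.

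This is not cosmetic. The level $n_i$ from your Step 1 depends on the base field: it is the first $n$ with $\alpha_i \bmod \mathfrak q\notin\varphi_{\mathfrak q}^{n}(\mathbb{P}^1(k_{\mathfrak q}))$ for a prime $\mathfrak q$ of the base, and this can grow without bound as $k_{\mathfrak q}$ grows. With $F=L^{(m)}$ you would need $m\ge n_i(L^{(m)})$, and nothing guarantees it.

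Worse, nothing in the definition prevents $L^{(m)}$ from containing some $\beta\in\varphi^{-m}(\alpha_i)$; condition (1) only asks $L^{(m)}\subsetneq L^{(m)}_m(\varphi,\alpha_i)$. In that case every element of $\mathrm{Gal}(L^{(m)}_m(\varphi,\alpha_i)/L^{(m)})$ fixes $\beta$, and there is no derangement to glue. Your fallback via a fibered product over $\mathrm{Gal}(F/K)$ is not carried out. It needs derangements in the $G_n(\alpha_i)$ with a common restriction to $F$, which is the same unresolved point, and $L^{(m)}$ need not be Galois over $K$ or lie inside the arboreal fields.

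The paper orders the quantifiers differently. It first fixes $E=\prod_i E_i$, with $E_i$ from Proposition \ref{prop:derangement-in-large-enough-extensions} (ultimately Lemma \ref{lem:nonper-modp}). Only then does it use the ``for every extension $E$'' clause of the definition, which your argument never invokes, to get $L^{(m)}\supseteq E$.

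You should know, however, that the paper does not resolve this dependence either. In the proof of Proposition \ref{prop:derangement-finite-set}, the threshold $M_i$ is the one attached to $N_i=L^{(m)}$, so the condition $m\ge M_i$ is imposed after $L^{(m)}$ has already been chosen as a function of $m$. Both arguments become complete if the base field in Definition \ref{def:linearly-disjoint-system} can be taken independent of $m$, as in the situation of Theorem \ref{thm:HanTucker}. Under that reading your Steps 1--2 work essentially as written.

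A smaller point concerns the partition by tail length. Partitioning a positive-density set that way need not produce a piece of positive density, since density is not countably additive. You do not need density at that stage, though. A single prime $\mathfrak q$ of the base field with good reduction, and with $\alpha_i$ non-periodic modulo $\mathfrak q$, already gives a derangement of $\varphi^{-n}(\alpha_i)$ at every level $n$ with $\alpha_i \bmod \mathfrak q\notin\varphi_{\mathfrak q}^{n}(\mathbb{P}^1(k_{\mathfrak q}))$. Take any element of a decomposition group above $\mathfrak q$ that induces the $N\mathfrak q$-power map on the residue field. A fixed point of it would reduce to a $k_{\mathfrak q}$-rational preimage, which is impossible. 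This needs no unramifiedness, so it can replace the role of Lemma \ref{lem:nonper-modp}, and Chebotarev supplies the density afterwards.
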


In fact, Theorem \ref{thm:multiperiodic} holds in much greater generality for multiple rational maps as follows. 

\begin{theorem}\label{thm:multimultiperiodic}
Let $K$ be a number field, \(g \in\mathbb{N}\) and \(\varphi_{1}, \ldots, \varphi_{g}: \mathbb{P}^{1}_{K} \longrightarrow \mathbb{P}^{1}_{K}\) be finitely many rational functions, each of degree at least $2$. For every \(1 \leq i \leq g\), let \(\mathcal{S}_{i} = \big\{\alpha_{ij} \big\}_{j=1}^{n_{i}}\subset\mathbb{P}^{1}(K)\) be a set consisting entirely of points that are not \(\varphi_{i}\)-periodic. Furthermore, suppose that the set \(\big\{(\varphi_{i}, \mathcal{S}_{i})\big\}_{i=1}^{g}\) forms an almost linearly-disjoint system.  Then, there exists a positive density subset of primes $\mathfrak{p}$ of $K$ at which each of the \(\varphi_{i}\) has good reduction such that for every \(1 \leq i \leq g\) and for every \(1 \leq j \leq n_{i}\), \(\alpha_{ij}\) is not \(\varphi_{i}\)-periodic modulo \(\mathfrak{p}\).
\end{theorem}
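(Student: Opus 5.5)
The plan is a Chebotarev argument in the arboreal fields. Since Definition \ref{def:linearly-disjoint-system} is not yet visible, I assume it says roughly the following. For each $i$ and each $\alpha_{ij}\in\mathcal{S}_i$, let $K_{ij,n}=K(\varphi_i^{-n}(\alpha_{ij}))$ be the $n$-th arboreal field. Then there are a finite extension $F/K$ and an index $N$ such that, for $n\ge N$, the fields $F K_{ij,n}$ are linearly disjoint over $F$ as $(i,j)$ varies.

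The basic criterion is as follows. Let $\mathfrak p$ be a prime of good reduction for every $\varphi_i$, outside a finite bad set $B$. If $\alpha_{ij}$ is $\varphi_{i,\mathfrak p}$-periodic, then every element of its backward orbit tree mod $\mathfrak p$ contains a periodic point. Hence for every $n$ there is a $\beta\in\varphi_i^{-n}(\alpha_{ij})$ whose reduction lies in the residue field $k_{\mathfrak p}$. For $\mathfrak p$ unramified in $K_{ij,n}$, this gives a point of $\varphi_i^{-n}(\alpha_{ij})$ fixed by $\mathrm{Frob}_{\mathfrak p}$, modulo the usual care about collisions of preimages mod $\mathfrak p$, which is harmless since only finitely many primes divide the discriminants. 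So it suffices to find, for a single large $n$, a positive density of $\mathfrak p$ such that $\mathrm{Frob}_{\mathfrak p}$ acts without fixed points on $\varphi_i^{-n}(\alpha_{ij})$ for every pair $(i,j)$ simultaneously.

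For one pair $(i,j)$, I will use that $\alpha_{ij}$ is not periodic. By Proposition \ref{cor:nonper-red}, applied via Chebotarev as in the proof of Theorem \ref{thm:BGHKST}, there is a positive density of primes at which $\alpha_{ij}$ is non-periodic. I will extract from this an element $\sigma_{ij}$ of $G_{ij,n}=\mathrm{Gal}(FK_{ij,n}/F)$ acting fixed-point-freely on $\varphi_i^{-n}(\alpha_{ij})$, for $n$ large.

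To get this element, I will take the positive-density set from Proposition \ref{cor:nonper-red}. Conversely to the criterion above, non-periodicity of $\alpha_{ij}$ at $\mathfrak p$ forces some finite level $n(\mathfrak p)$ with no $k_{\mathfrak p}$-rational preimage. Since the preimage sets are nested, a positive-density subset has $n(\mathfrak p)\le n_0$ for a uniform $n_0$, by countable additivity of upper density, which needs care. Such a Frobenius has no fixed point on level $n_0$, and therefore none on any deeper level $n\ge n_0$ either.

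This must also be arranged inside $\mathrm{Gal}(\cdot/F)$ rather than $\mathrm{Gal}(\cdot/K)$. Here I will restrict to primes splitting completely in $F$. The set from Proposition \ref{cor:nonper-red} might meet them in density zero, so the robust route is to rerun the BGHKST/Silverman argument over the base field $F$ instead of $K$: apply Proposition \ref{cor:nonper-red} with $K$ replaced by $F$. This yields $\sigma_{ij}\in\mathrm{Gal}(FK_{ij,n}/F)$ with no fixed points, for all $n\ge\max(N,n_0)$.

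Now fix such an $n$. By linear disjointness over $F$, the group $\mathrm{Gal}(\prod_{i,j}FK_{ij,n}/F)$ is the full product $\prod G_{ij,n}$, so the tuple $(\sigma_{ij})$ is realized by a single element $\sigma$. Chebotarev over $F$ then gives a positive-density set of primes $\mathfrak P$ of $F$, of degree one over $K$, with Frobenius conjugate to $\sigma$. These lie over a positive-density set of primes $\mathfrak p$ of $K$ that split completely in $F$. For those, $\mathrm{Frob}_{\mathfrak p}$ acts on each $\varphi_i^{-n}(\alpha_{ij})$ like a conjugate of $\sigma_{ij}$, hence without fixed points, so no $\alpha_{ij}$ is periodic mod $\mathfrak p$. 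Finally, discard the finitely many primes of bad reduction, the ramified primes, and the primes where preimages collide.

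I expect the main obstacle to be the single-pair step: producing a fixed-point-free Frobenius at a uniform level over the enlarged base $F$. It depends on transferring Theorem \ref{thm:BGHKST} / Proposition \ref{cor:nonper-red} to $F$ and on the uniformity in $n$. The product step is formal once the definition of almost linear disjointness supplies the product decomposition.
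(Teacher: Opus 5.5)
Your architecture is exactly the paper's: a derangement for each pair $(i,j)$, assembled into one element via linear disjointness, then Chebotarev over the enlarged base and descent to $K$. The gap is the step you flag yourself, namely producing a fixed-point-free $\sigma_{ij}\in\mathrm{Gal}(FK_{ij,n}/F)$ at a level $n$ independent of the prime.

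\textbf{Why that step fails as written.} Upper density is not countably additive. Your level $n(\mathfrak p)$ is only bounded in terms of $N\mathfrak p$ (one has $n(\mathfrak p)\le N\mathfrak p+1$), and a positive-density set of primes can be an increasing union of finite sets, e.g.\ the sets $\{\mathfrak p: N\mathfrak p\le x\}$. So nothing yields a positive-density subset with $n(\mathfrak p)\le n_0$. Rerunning Proposition \ref{cor:nonper-red} over $F$ only produces another positive-density set with the same non-uniform levels, so it does not repair this. As written, $\sigma_{ij}$ is never actually constructed.

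\textbf{The missing idea: one prime suffices.} This works through decomposition groups, and it is what the paper extracts from Lemma \ref{lem:nonper-modp} in Proposition \ref{prop:derangement-in-large-enough-extensions}.

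Fix one prime $\mathfrak p$ of $K$ of good reduction for $\varphi_i$ at which $\alpha_{ij}$ is not periodic, and any prime $\mathfrak P\mid\mathfrak p$ of $F$. Write $\bar\alpha$ for the reduction of $\alpha_{ij}$. Then $\bar\alpha$ is still non-periodic for $\varphi_{i,\mathfrak P}$, since enlarging the residue field changes nothing. So no transfer of Theorem \ref{thm:BGHKST} to $F$ is needed.

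If $n>N\mathfrak P$, no $\bar\gamma\in\mathbb P^1(k_{\mathfrak P})$ satisfies $\varphi_{i,\mathfrak P}^n(\bar\gamma)=\bar\alpha$. Otherwise $\bar\gamma,\varphi_{i,\mathfrak P}(\bar\gamma),\dots,\varphi_{i,\mathfrak P}^n(\bar\gamma)$ would be $n+1$ distinct points of $\mathbb P^1(k_{\mathfrak P})$, since a repetition would make $\bar\alpha$ periodic.

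Now take any $\sigma$ in the decomposition group of a prime $\mathfrak Q\mid\mathfrak P$ of $FK_{ij,n}$ that induces $x\mapsto x^{N\mathfrak P}$ on the residue field. Reduction mod $\mathfrak Q$ is $\sigma$-equivariant. So $\sigma\beta=\beta$ would force the reduction of $\beta$ to lie in $\mathbb P^1(k_{\mathfrak P})$, while good reduction gives it $\varphi_{i,\mathfrak P}^n$-image $\bar\alpha$, a contradiction. Hence $\sigma$ is a derangement of $\varphi_i^{-n}(\alpha_{ij})$ for every $n>N\mathfrak P$, and no unramifiedness is needed. With this in place, your product-and-Chebotarev paragraph goes through.

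\textbf{The definition differs from the one you assumed.} Definition \ref{def:linearly-disjoint-system} gives no single $F$ valid for all $n\ge N$. Instead, for each prescribed extension $E$ of $K$, it gives arbitrarily large $m$ and bases $L^{(m)}\supseteq E$ that depend on $m$. The product step must then be carried out over $L^{(m)}$, and the per-pair threshold depends on that base. In the argument above the threshold becomes $N\mathfrak P$ for a prime $\mathfrak P$ of $L^{(m)}$; in the paper it is the $M$ of Lemma \ref{lem:nonper-modp}, which likewise depends on $L$. When the base moves with $m$, you must check that $m$ really exceeds the threshold attached to $L^{(m)}$. Your fixed-$F$ formulation avoids this issue, but the definition does not supply it.
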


Theorem \ref{thm:multimultiperiodic} leads to the following multi-map and multi-element generalization of Theorem \ref{thm:BGHKST}. 

\begin{theorem}\label{thm:main-intro}
Let \(K\) be a number field, let \(\varphi_{1},\ldots,\varphi_{g}:\mathbb{P}_{K}^{1}\longrightarrow\mathbb{P}_{K}^ {1}\) be rational maps of degree at least 2, and let \({\mathcal{A}}_{1},\ldots,{\mathcal{A}}_{g}\) be finite subsets of \(\mathbb{P}^{1}(K)\). Let \(\mathcal{B}_{i} \subseteq \mathcal{A}_{i}\) be the subset consisting of all the points in \(\mathcal{A}_{i}\) that are not \(\varphi_{i}\)-preperiodic, if there are any. Furthermore, suppose that the set \(\big\{(\varphi_{i}, \mathcal{B}_{i})\big\}_{i=1}^{g}\) forms an almost linearly-disjoint system. Let \({\mathcal{T}}_{1},\ldots,{\mathcal{T}}_{g}\) be finite subsets of \(\mathbb{P}^{1}(K)\) such that no \({\mathcal{T}}_{i}\) contains any \(\varphi_{i}\)-preperiodic points. Then, there is a positive integer \(M\) and a set of primes \({\mathcal{P}}\) of \(K\) with positive density such that for any \(i=1,\ldots,g\), any \(\gamma\in{\mathcal{T}}_{i}\), any \(\alpha\in{\mathcal{A}}_{i}\), any \(\mathfrak{p}\in{\mathcal{P}}\), and any \(m\geq M\),
\[
\varphi_{i}^{m}(\gamma)\not\equiv\alpha\pmod{\mathfrak{p}}.
\]
\end{theorem}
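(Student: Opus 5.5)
The plan is to deduce Theorem \ref{thm:main-intro} from Theorem \ref{thm:multimultiperiodic}, following the way Theorem \ref{thm:BGHKST} is derived from its single-point version in \cite{BGHKST}. The idea is to intersect a positive-density set of primes, supplied by Theorem \ref{thm:multimultiperiodic}, with the cofinite set of primes at which all the preperiodic data behave well.

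First, split each \(\mathcal{A}_i = \mathcal{B}_i \sqcup \mathcal{C}_i\), where \(\mathcal{C}_i\) consists of the \(\varphi_i\)-preperiodic points.

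\textbf{Preperiodic targets.} For \(\alpha\in\mathcal{C}_i\), let \(Z_i\) be the finite set of all \(\varphi_i\)-preperiodic points occurring in the forward orbits of \(\mathcal{C}_i\). We discard finitely many primes: those of bad reduction, those at which two distinct points of \(Z_i\cup\mathcal{T}_i\) collide, and those at which the reduced orbit structure of \(Z_i\) changes. At any remaining \(\mathfrak p\), the congruence \(\varphi_i^m(\gamma)\equiv\alpha\) with \(m\) large forces \(\varphi_i^{m'}(\gamma) \bmod \mathfrak p\) to lie in the reduction of some periodic cycle of \(Z_i\). This is exactly the situation treated in \cite{BGHKST} via \cite[Lemma 4.3]{BGKT12} and Silverman's result \cite{Sil93}. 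I would reuse their argument to get a positive-density set \(\mathcal P_0\) and an integer \(M_0\) handling all \(\gamma\in\mathcal T_i\) and all \(\alpha\in\mathcal C_i\) simultaneously. Indeed, the hypothesis of Theorem \ref{thm:BGHKST} with \(\mathcal{A}_i\) replaced by \(\mathcal{C}_i\) is satisfied, so its set of primes can be taken as \(\mathcal P_0\). The issue is then to make \(\mathcal{P}_0\) compatible with the primes for the non-preperiodic part. For this I would go inside the proof of Theorem \ref{thm:multimultiperiodic}: the primes it produces are defined by Frobenius conditions in the arboreal fields of \(\mathcal B_i\). The primes of \cite{BGHKST} are defined by Chebotarev conditions in fields attached to the cycles of \(Z_i\), such as cyclotomic-type or multiplier fields. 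The almost-linear-disjointness hypothesis should allow both sets of conditions to be imposed jointly, giving a positive-density intersection.

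\textbf{Non-preperiodic targets.} For \(\alpha\in\mathcal B_i\), the key observation is this: if \(\alpha \bmod \mathfrak p\) is not \(\varphi_{i,\mathfrak p}\)-periodic, then \(\alpha\) has only finitely many preimages along any orbit. More precisely, let \(\bar\gamma=\gamma \bmod \mathfrak p\). Its orbit is eventually periodic with tail length \(t_\mathfrak p\). Since \(\alpha\) is not periodic mod \(\mathfrak p\), the congruence \(\varphi_i^m(\gamma)\equiv\alpha\) forces \(m< t_{\mathfrak p}\). So I need a uniform bound on \(t_\mathfrak p\), and this is the main obstacle, because tail lengths are not uniformly bounded across primes. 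To overcome it, I would replace \(\alpha\) by the finite set \(\mathcal{B}_i^{(N)}=\bigcup_{n\le N}\varphi_i^{-n}(\alpha)\), or rather work with the statement that \(\varphi_i^{-N}(\alpha)\) contains no \(K_{\mathfrak p}\)-rational point while \(\alpha\) is non-periodic modulo \(\mathfrak p\). Concretely, the Frobenius element at \(\mathfrak p\) in the arboreal extension \(K(\varphi_i^{-N}(\alpha))/K\) should fix no root. Then \(\varphi_i^m(\gamma)\equiv\alpha\) with \(m\ge N\) would produce a \(K\)-rational-mod-\(\mathfrak p\) point \(\varphi_i^{m-N}(\gamma)\) in \(\varphi_i^{-N}(\alpha)\), which is a contradiction. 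The existence of a fixed-point-free element is the Odoni/Jones-type input. I expect the proof of Theorem \ref{thm:multimultiperiodic} to produce, through linear disjointness of the arboreal towers, a positive proportion of Galois elements acting without fixed points at level \(N\) simultaneously for all \(\alpha\in\mathcal B_i\) and all \(i\). Chebotarev then gives the density.

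Finally, set \(M=\max(M_0,N)\) and let \(\mathcal P\) be the jointly Chebotarev-defined set. Its positive density follows because, under the almost linearly-disjoint hypothesis, the Galois group of the compositum is close to the direct product, so the conjugacy-invariant subsets of the factors multiply.
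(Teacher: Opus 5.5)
There is a genuine gap where you glue your two halves together. You build $\mathcal P_0$ (from Theorem \ref{thm:BGHKST} applied to the preperiodic sets $\mathcal C_i$) and a second Chebotarev set for the targets in $\mathcal B_i$ independently. You then assert that almost linear disjointness lets both conditions be imposed at once with positive density. But two positive-density sets of primes can be disjoint, and the hypothesis does nothing to prevent this. Almost linear disjointness of $\{(\varphi_i,\mathcal B_i)\}$ only constrains the arboreal fields $L_m(\varphi_i,\alpha)$, $\alpha\in\mathcal B_i$, relative to one another. Inside \cite{BGHKST}, the primes of $\mathcal P_0$ are cut out by a Frobenius condition in the field generated by the first-time preimages of the preperiodic targets over an auxiliary extension, not by cyclotomic or multiplier fields. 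Nothing says that field is linearly disjoint from the compositum of the $\mathcal B_i$-arboreal fields, or even compatible with it on their intersection. So your closing claim, that the Galois group of the compositum is close to a direct product, is unsupported for exactly the compositum you need. As written, the argument only gives each half separately.

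The missing idea is a \emph{single} Frobenius that is simultaneously a derangement on everything; linear disjointness is never used to multiply densities. The paper proceeds as follows.
\begin{enumerate}
\item It first reduces to the case where every preperiodic point of $\mathcal A_i$ is $\varphi_i$-fixed.
\item It uses Theorem \ref{thm:multimultiperiodic} only to choose \emph{one} prime $\mathfrak p$ at which every point of every $\mathcal B_i$ is non-periodic modulo $\mathfrak p$. This prime also has good reduction, $\operatorname{char} k_{\mathfrak p}>\deg\varphi_i$, and preserved ramification at the fixed points.
\item It applies Proposition \ref{prop:same-ram-index} to all of $\mathcal A_i$ at one $\tilde{\mathfrak p}$ over $\mathfrak p$. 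This gives a field $L$ and a level $m$ such that $\mathfrak q=\tilde{\mathfrak p}\cap\mathfrak o_L$ is unramified with residue degree $>1$ in $L(\beta)$, for every $\beta$ with $\varphi_i^m(\beta)\in\mathcal A_i$ and $\varphi_i^t(\beta)\notin\mathcal A_i$ for $t<m$, for all $i$ at once.
\item Hence the Frobenius at $\mathfrak q$ is a derangement in the one Galois extension $M/L$ generated by all such $\beta$, and Chebotarev in $M/L$ gives the positive-density set.
\end{enumerate}

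Your observation for $\alpha\in\mathcal B_i$ is correct: if $\varphi_i^N(x)-\alpha$ has no root modulo a prime, then $\varphi_i^n(\gamma)\equiv\alpha$ is impossible there for all $n\ge N$ and all $\gamma$. To use it, discard the finitely many primes where roots of $\varphi_i^N(x)-\alpha$ collide, and work over an extension $L$ rather than $K$ before descending. It is a cleaner replacement for the paper's Claim~1 on those targets. But it must be imposed through this same $M/L$ and the same $\mathfrak q$, via Lemma \ref{lem:nonper-modp} at $\tilde{\mathfrak p}$, not through a separately constructed set of primes.
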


\color{black}

Firstly, we note that both Theorems \ref{thm:multimultiperiodic} and \ref{thm:main-intro} hold in greater generality as a consequence of our methods. We will state and establish them in Section \ref{sec:augmentation}. We also want to note that the proof of Theorem \ref{thm:main-intro} in our article follows a logically different order from the proof of Theorems \ref{thm:BGHKST} and \ref{thm:multiperiodic} in \cite{BGHKST}. For us, Theorem \ref{thm:main-intro} is a consequence of Theorem \ref{thm:multimultiperiodic}. This is because \cite{BGHKST} uses a result from \cite{BGKT12} which guarantees the existence of infinitely many primes modulo which a non-periodic element stays non-periodic. However, no such result exists for multiple maps and multiple elements. Therefore, in this article we have to first establish Theorem \ref{thm:multimultiperiodic} to obtain infinitely many primes modulo which finitely many elements stay non-perodic. Only then, we can establish Theorem \ref{thm:main-intro} and its application to dynamical Mordell-Lang conjecture. 

Another corollary of Theorem \ref{thm:main-intro} pertains to elliptic curves and we need to introduce Latt\'es map to state it. Suppose that \(E\) is an elliptic curve defined over a number field \(K\) and \(q\) is a prime. When \(E\) is written in Weierstrass form, we have a morphism \(x: E \longrightarrow \mathbb{P}^{1}_{K}\) that takes a point in \(E\) to its first coordinate. Let \([q] : E \longrightarrow E\) denote the multiplication by \(q\). Then, there exists a unique \(\varphi\in K(x)\) such that \(x \circ [q] = \varphi \circ x\). The map \(\varphi\) is called the Latt\'es map with respect to \(q\). Readers can consult \cite{Sil09} for further facts about Latt\'es map and elliptic curves in general. 

\begin{corollary}\label{cor:multiell-order}
Let \(K\) be a number field, \(g \in\mathbb{N}\), \(E_{1}, E_{2}, \ldots, E_{g}\) be finitely many elliptic curves defined over \(K\), \(q\) be a rational prime and \(\varphi_{1}, \varphi_{2}, \ldots, \varphi_{g}\) be the associated Latt\'es maps with respect to \(q\) respectively. For every \(1 \leq i \leq g\), let \(\mathcal{S}_{i} = \{Q_{ij}\}_{j=1}^{n_{i}}\subset E_{i}(K)\) be a subset containing finitely many non-torsion points. Furthermore, assume that \(\big\{\big(\varphi_{i}, \{x(Q_{ij})\}_{j=1}^{n_{i}}\big)\big\}_{i=1}^{g}\) forms an almost linearly-disjoint system. Then, for every positive integer \(n\) there exists a positive density set of primes \(\mathfrak{p}\) of \(K\) at which each of the \(E_{i}\) has good reduction such that for every \(1 \leq i \leq g\) and every \(1 \leq j \leq n_{i}\), the order of \((Q_{ij})_{\mathfrak{p}}\) in the finite group \((E_{i})_{\mathfrak{p}}(k_{\mathfrak{p}})\) is divisible by \(q^{n}\).
\end{corollary}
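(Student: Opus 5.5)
The plan is to reduce Corollary \ref{cor:multiell-order} to Theorem \ref{thm:main-intro}, applied to the Latt\`es maps \(\varphi_i\). We translate the divisibility statement into a non-membership statement about forward orbits modulo \(\mathfrak{p}\).

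\emph{Setup and translation.} Fix \(n\). Take \(\mathcal{A}_i=\{x(O)\}=\{\infty\}\), the image of the identity. Since \(x\circ[q^m]=\varphi_i^m\circ x\) and \(\infty\) is fixed by \(\varphi_i\), it is \(\varphi_i\)-preperiodic. Hence \(\mathcal{B}_i=\emptyset\), and the system \(\{(\varphi_i,\mathcal{B}_i)\}\) is trivially almost linearly-disjoint.

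\emph{Choice of \(\mathcal{T}_i\).} We cannot take \(\mathcal{T}_i=\{x(Q_{ij})\}\) directly. The conclusion of Theorem \ref{thm:main-intro} would then say \(\varphi_i^m(x(Q_{ij}))\not\equiv\infty\) for all \(m\ge M\), i.e.\ \([q^m]Q_{ij}\neq O\) modulo \(\mathfrak p\) for all large \(m\). That is false, because \((Q_{ij})_{\mathfrak p}\) always has finite order.

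So the correct reduction goes instead through the non-periodicity statement Theorem \ref{thm:multimultiperiodic}, whose hypothesis is exactly the one given in the corollary. Write the order of \((Q_{ij})_{\mathfrak p}\) in \((E_i)_{\mathfrak p}(k_{\mathfrak p})\) as \(q^{e}u\) with \(q\nmid u\). Then:
\begin{itemize}
\item \((Q_{ij})_{\mathfrak p}\) is \([q]\)-periodic, i.e.\ \([q^m]P=P\) for some \(m\ge1\), if and only if its order is prime to \(q\), i.e.\ \(e=0\).
\item More generally, \([q^{n-1}]Q_{ij}\) being non-periodic modulo \(\mathfrak p\) means that \(q\) divides the order of \(q^{n-1}Q_{ij}\), which is equivalent to \(e\ge n\).
\end{itemize}

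On \(\mathbb{P}^1\), periodicity of a point \(P\) under \([q]\) implies \(x(P)\) is \(\varphi_i\)-periodic. Hence non-periodicity of \(x(q^{n-1}Q_{ij})=\varphi_i^{n-1}(x(Q_{ij}))\) modulo \(\mathfrak p\) under \(\varphi_{i,\mathfrak p}\) forces non-periodicity of \(q^{n-1}Q_{ij}\) under \([q]\). This requires good reduction of \(E_i\), which holds away from finitely many primes, and compatibility of \(x\) with reduction.

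\emph{Applying the theorem.} Apply Theorem \ref{thm:multimultiperiodic} with \(\mathcal{S}_i'=\{\varphi_i^{n-1}(x(Q_{ij}))\}_j\). These points are not \(\varphi_i\)-periodic, since \(q^{n-1}Q_{ij}\) is non-torsion and \(x\) of a non-torsion point is not preperiodic for the Latt\`es map.

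\emph{Main obstacle.} The hypothesis of the corollary concerns \(\{x(Q_{ij})\}\), while the theorem is applied to their images under \(\varphi_i^{n-1}\). So I must verify that almost linear-disjointness passes to forward images. The arboreal fields of \(\varphi_i^{n-1}(\beta)\) contain those of \(\beta\) shifted by \(n-1\) levels. I would argue that eventual linear disjointness of the towers generated by preimages is inherited, since the preimage tree of \(\varphi^{n-1}(\beta)\) at level \(N+n-1\) contains that of \(\beta\) at level \(N\) and both define the same tail tower. If this inheritance fails, the fallback is to use the strengthened version of Theorem \ref{thm:multimultiperiodic} promised in Section \ref{sec:augmentation}.

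Finally, discard the finitely many primes of bad reduction of the \(E_i\) from the resulting positive-density set; this does not affect positive density, and the remaining primes give the corollary.
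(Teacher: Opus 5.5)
Your main line is the paper's: apply Theorem~\ref{thm:multimultiperiodic} to $\alpha_{ij}=x([q^{n-1}]Q_{ij})=\varphi_i^{n-1}(x(Q_{ij}))$, turn non-periodicity modulo $\mathfrak p$ into $q^n\mid\operatorname{ord}(Q_{ij})_{\mathfrak p}$ (your $q^e u$ computation is right), and discard the bad primes. The paper moves from almost linear-disjointness of $\{x(Q_{ij})\}$ to that of $\{x([q^{n-1}]Q_{ij})\}$ with only the phrase ``since $x\circ[q]=\varphi\circ x$''. You are right to flag this step, but \textbf{the argument you sketch for it does not work}. The containment $L_N(\varphi,\beta)\subseteq L_{N+n-1}(\varphi,\varphi^{n-1}(\beta))$ says the fields attached to the new points are \emph{larger}, and linear disjointness passes to subfields, not to overfields. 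Nor do the towers coincide for a general rational map. For $x^2+c$, the tree over $\varphi(\beta)$ also contains the entire tree over $-\beta$, about which the hypothesis says nothing.

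What you need is a containment in the opposite direction at a fixed level, and for Latt\`es maps the group law supplies it: if $Q$ is non-torsion, then $L_m(\varphi_i,x([q]Q))\subseteq L_m(\varphi_i,x(Q))$. To see this, fix $S$ with $[q^m]S=Q$. Then $\varphi_i^{-m}(x(Q))=\{x(S+T)\}$ and $\varphi_i^{-m}(x([q]Q))=\{x([q]S+T)\}$, with $T\in E_i[q^m]$. An automorphism $\sigma$ fixing $L_m(\varphi_i,x(Q))$ satisfies $\sigma(S+T)=\epsilon_T(S+T)$ with $\epsilon_T=\pm1$. If $\epsilon_T\neq\epsilon_0$ for some $T$, then $2S\in E_i[q^m]$, which would make $Q$ torsion. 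So $\sigma$ acts by a single sign on $S$ and on $E_i[q^m]$, hence fixes every $x([q]S+T)$. Iterating $n-1$ times, conditions (2) and (3) of Definition~\ref{def:linearly-disjoint-system} descend to the new fields for the same $m$ and $L$. Condition (1) need not descend, but it plays no role in the proof of Theorem~\ref{thm:multimultiperiodic}.

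Your fallback, Theorem~\ref{thm:augmented-multimultiperiodic} with $r_{ij}=n-1$, formally gives the conclusion from the stated hypothesis. However, its proof (Proposition~\ref{prop:augmented-derangement-system}) makes the same upward inference. The group-law containment is therefore the real justification and should be your primary step, not a contingency.

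A minor point: your reason for discarding Theorem~\ref{thm:main-intro} is off. Having $[q^m](Q_{ij})_{\mathfrak p}=O$ for some $m$ requires the order to be a power of $q$, not merely finite. That route fails because ``order not a power of $q$'' is not the statement you need.
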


Corollary \ref{cor:multiell-order} is a weaker result than what is established by Pink for arbitrary abelian varieties (see \cite[Theorem 5.3]{Pin04}) and by Perucca for arbitrary semi-abelian varieties (see \cite[Proposition 4.2]{Perucca}). 

This article is organized into several short sections. The next section introduces notations and definitions used in this article. Section \ref{sec:prelim-result} contains preliminary facts and results that will be employed to establish most results in this article. Section \ref{sec:Proofs-Main-Results} is dedicated to establishing Theorems \ref{thm:multiperiodic}, \ref{thm:multimultiperiodic} and Corollary \ref{cor:multiell-order} respectively. The next section is fully dedicated to the main result of this article, Theorem \ref{thm:main-intro}. Section \ref{sec:DML} is dedicated to establishing Theorem \ref{thm:DML}. Section \ref{sec:augmentation} is dedicated to explaining why the results in this article invariant under augmentation of points with elements of forward orbits. This is the section where an unconditional case of dynamical Mordell-Lang conjecture for certain diagonal action of polynomials is established. Section \ref{sec:Arboreal} gives a pleasing combinatorial interpretation of main results in terms of arboreal Galois theory and the last section contains some final remarks. 

\section{Notation and terminology}\label{sec:prelim}
Given a number field \(K\) with algebraic closure \(\overline{K}\), \(\mathfrak{o}_{K}\) will denote the corresponding ring of algebraic integers. We will call a nonzero prime \(\mathfrak{p}\) of \(\mathfrak{o}_{K}\) simply a \textit{prime of \(K\)}, and we denote the corresponding residue field \(k_{\mathfrak{p}}:=\mathfrak{o}_{K}/\mathfrak{p}\). Upon fixing an isomorphism \(\pi\) from \({\mathbb{P}}^{1}_{K}\) to the generic fibre of \({\mathbb{P}}^{1}_{\mathfrak{o}_{K}}\), for each prime \(\mathfrak{p}\) of \(K\), and for each \(x\in{\mathbb{P}}^{1}(K)\), we denote by \(r_{\mathfrak{p}}(x)\) the intersection of the Zariski closure of \(\pi(x)\) with the fibre above \(\mathfrak{p}\) of \({\mathbb{P}}^{1}_{\mathfrak{o}_{K}}\). The resulting map \(r_{\mathfrak{p}}:{\mathbb{P}}^{1}(K)\longrightarrow{\mathbb{P}}^{1}(k_{ \mathfrak{p}})\) is the \textit{reduction map} at \(\mathfrak{p}\). We say that \(\alpha\in{\mathbb{P}}^{1}(K)\) is \textit{congruent} to \(\beta\in{\mathbb{P}^{1}}(K)\) modulo \(\mathfrak{p}\), and we write \(\alpha\equiv\beta\pmod{\mathfrak{p}}\), if \(r_{\mathfrak{p}}(\alpha)=r_{\mathfrak{p}}(\beta)\).

If \(\varphi:{\mathbb{P}}^{1}\to{\mathbb{P}}^{1}\) is a morphism defined over the field \(K\), then after fixing a choice of homogeneous coordinates, there are relatively prime homogeneous polynomials \(F,G\in K[X,Y]\) of the same degree \(d=\deg\varphi\) such that \(\varphi([X,Y])=[F(X,Y):G(X,Y)]\); note that \(F\) and \(G\) are uniquely defined up to a nonzero constant multiple. One defines the following notion of good reduction of \(\varphi\), first introduced by Morton and Silverman in \cite{MS94}.

\begin{definition}\label{def:good-red}
Let \(K\) be a number field, let \(\mathfrak{p}\) be a prime of \(K\), and let \(\mathfrak{o}_{\mathfrak{p}}\subseteq K\) be the corresponding local ring of integers. Let \(\varphi:{\mathbb{P}}^{1}\longrightarrow{\mathbb{P}}^{1}\) be a morphism over \(K\), given by \(\varphi([X,Y])=[F(X,Y):G(X,Y)]\), where \(F,G\in\mathfrak{o}_{\mathfrak{p}}[X,Y]\) are relatively prime homogeneous polynomials of the same degree such that at least one coefficient of \(F\) or \(G\) is a unit in \(\mathfrak{o}_{\mathfrak{p}}\). Let \(\varphi_{\mathfrak{p}}:=[F_{\mathfrak{p}},G_{\mathfrak{p}}]\), where \(F_{\mathfrak{p}},G_{\mathfrak{p}}\in k_{\mathfrak{p}}[X,Y]\) are the reductions of \(F\) and \(G\) modulo \(\mathfrak{p}\). We say that \(\varphi\) has good reduction at \(\mathfrak{p}\) if \(\varphi_{\mathfrak{p}}:{\mathbb{P}^{1}}(k_{\mathfrak{p}})\longrightarrow{\mathbb{P}^{1}}(k_{ \mathfrak{p}})\) is a morphism of the same degree as \(\varphi\).
\end{definition}

The map \(\varphi_{\mathfrak{p}}:{\mathbb{P}^{1}}(k_{\mathfrak{p}})\to{\mathbb{P}^{1}}(k_{ \mathfrak{p}})\) in Definition \ref{def:good-red} is an appropriate analogue of the reduction of \(\varphi\) modulo \(\mathfrak{p}\). If \(\varphi\in K[t]\) is a polynomial, there is an elementary criterion for good reduction: \(\varphi\) has good reduction at \(\mathfrak{p}\) if and only if all coefficients of \(\varphi\) are \(\mathfrak{p}\)-adic integers, and its leading coefficient is a \(\mathfrak{p}\)-adic unit. 

\subsection{Almost Linearly Disjoint System}
Given a rational map \(\varphi : \mathbb{P}^{1}_{K} \longrightarrow \mathbb{P}^{1}_{K}\) of degree \(d \geq 2\), \(\alpha\in\mathbb{P}^{1}(K)\) and \(n \geq 1\), define \(K_{n}(\varphi, \alpha) := K\big(\varphi^{-n}(\alpha)\big) \) to be the field obtained upon adjoining all the roots of \(\varphi^{n}(x) - \alpha\) to \(K\). Then, we define \(K_{\infty}(\varphi, \alpha) := \bigcup_{n=1}^{\infty} K_{n}(\varphi, \alpha)\) which will be called the \textit{arboreal field} associated with \(\alpha\) over \(K\).

\begin{definition}\label{def:linearly-disjoint-system}
Let \(K\) be a field, let \(\varphi_{1}, \ldots, \varphi_{g} : \mathbb{P}^{1}_{K} \longrightarrow \mathbb{P}^{1}_{K} \) be finitely many rational maps of degree at least $2$ and let $\mathcal{S}_{1}, \ldots, \mathcal{S}_{g}$ be finite subsets of \(\mathbb{P}^{1}(K)\). We say that \(\big\{(\varphi_{i}, \mathcal{S}_{i})\big\}_{i=1}^{g}\) forms an \textit{almost linearly-disjoint system} if for every extension \(E\) of \(K\), there exists arbitrarily large positive integer \(m\) and corresponding finite extensions \(L := L^{(m)}\) of \(E\) satisfying the following:
    \begin{enumerate}
        \item For every \(1 \leq i \leq g\) and every \(\alpha\in\mathcal{S}_{i}\), \(L \subsetneq L_{m}\big(\varphi_{i}, \alpha\big)\).
in

        \item For every \(1 \leq i \leq g\), the fields \(\{L_{m}(\varphi_{i}, \alpha)\}_{\alpha\in\mathcal{S}_{i}}\) are linearly disjoint over \(L\).

        \item If \(g \geq 2\), the compositum fields \(\Big\{\prod_{\alpha\in\mathcal{S}_{i}} L_{m}(\varphi_{i}, \alpha)\Big\}_{i=1}^{g}\) are linearly disjoint over \(L\).
    \end{enumerate} 
In the case of \(g = 1\), when \(\{(\varphi_{1}, \mathcal{S}_{1})\}\) forms a linearly-disjoint system, we will say that \(\mathcal{S}_{1}\) is almost \(\varphi_{1}\)-linearly disjoint. 
\end{definition}

\begin{remark}\label{rem:linearly-disjoint-1}
It is a straightforward consequence of the definition \ref{def:linearly-disjoint-system} that if \(\{(\varphi_{i}, \mathcal{S}_{i})\}_{i=1}^{g}\) forms a linearly-disjoint system when \(\mathcal{S}_{i}\) are considered as a subset of \(\mathbb{P}^{1}(K)\), then \(\{(\varphi_{i}, \mathcal{S}_{i})\}_{i=1}^{g}\) forms a linearly-disjoint system when \(\mathcal{S}_{i}\) are considered as a subset of \(\mathbb{P}^{1}(L)\), for any finite extension \(L\) of \(K\). 
\end{remark}

\color{black}
\subsection{Some Examples of Linearly Disjoint Systems} Clearly any system that is linearly disjoint over \(K\) automatically forms an almost linearly-disjoint system. We will discuss some examples of almost linearly disjoint system that are not linearly-disjoint over \(K\). 

Take \(K = \mathbb{Q}\), \(\varphi(x) = x^{2}\) and \(\mathcal{S} = \{ 3, 5, 7\}\). Then, for any \(\alpha\in \mathcal{S}\) and any \(m\geq 2\), we have that \(K_{m}(\varphi, \alpha) = \mathbb{Q}\big(\zeta_{2^{m}}, \alpha^{\frac{1}{2^{m}}}\big),\) where \(\zeta_{2^{m}}\) is the primitive \(2^{m}\)-th root of unity. Then, the fields \(\big\{K_{m}(\varphi, \alpha)\big\}_{\alpha\in\mathcal{S}}\) are not linearly-disjoint over \(\mathbb{Q}\), since their intersection contains \(\mathbb{Q}(\zeta_{2^{m}})\). However, they are almost linearly-disjoint because the fields \(\big\{K_{m}(\varphi, \alpha)\big\}_{\alpha\in\mathcal{S}}\) are linearly-disjoint over a proper subfield \(\mathbb{Q}(\zeta_{2^{m}})\), and hence almost linearly-disjoint. A more concrete and non-trivial example regarding almost linear-disjoint system is implied by a recent result of Han and Tucker (see \cite[Theorem 5.4]{HanTucker}).

\begin{theorem}
Let \(K\) be a number field, \(f(x) = x^{q} + c \in K[x]\), where \(c\) is an algebraic integer, \(q = p^{r}\) is a power of a prime number \(p\) such that \(\nu_{\mathfrak{p}}(c) > 0\) for some prime \(\mathfrak{p}\) of \(K\) lying over \(p\) and suppose that \(0\) is not preperiodic for \(f\). Furthermore, suppose that \(\beta_{1}, \beta_{2}, \ldots, \beta_{t}\in K\) are finitely many strictly preperiodic elements for \(f\) such that for all \(i \neq j\), \(\beta_{i}\) does not lie in the forward orbit of \(\beta_{j}\) . Then, the system \((f, \{\beta_{i}\}_{i=1}^{t})\) forms an almost linearly-disjoint system.  
\label{thm:HanTucker}
\end{theorem}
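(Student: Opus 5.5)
The plan is to deduce the statement from the structural result of Han and Tucker on arboreal extensions for the unicritical family \(f(x)=x^{q}+c\), and then to check the three conditions of Definition \ref{def:linearly-disjoint-system} directly. Since \(g=1\), only conditions (1) and (2) are relevant.

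\textbf{Step 1: ramification input.} The first step is to extract from \cite[Theorem 5.4]{HanTucker} the following. Because \(\nu_{\mathfrak{p}}(c)>0\) for some \(\mathfrak{p}\mid p\) and \(0\) is not \(f\)-preperiodic, the polynomials \(f^{m}(x)-\beta_{i}\) behave like Eisenstein-type polynomials at \(\mathfrak{p}\) after a bounded translation. The hypothesis that no \(\beta_{i}\) lies in the forward orbit of another \(\beta_{j}\) ensures that the towers \(K_{m}(f,\beta_{i})\) are ``independently ramified'' above \(\mathfrak{p}\). The expected conclusion is: there is a base field \(F_{m}\), generated by the roots of unity \(\mu_{q^{m}}\) together with a bounded number of levels of the trees, such that the fields \(\{F_{m}K_{m}(f,\beta_{i})\}_{i=1}^{t}\) are linearly disjoint over \(F_{m}\). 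Moreover, \([F_{m}K_{m}(f,\beta_{i}):F_{m}]\to\infty\) as \(m\to\infty\). I would state this as a lemma and cite Han--Tucker for the Galois-theoretic content. Concretely, it says that the Galois group of the compositum over \(F_{m}\) is the full product of the individual Galois groups.

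\textbf{Step 2: base change to \(E\).} Fix an extension \(E/K\). For the definition to have content \(E\) should be finite, or the condition should be read via finite subextensions, so I would assume \(E/K\) finite. Let \(M_{m}\) be the compositum of the \(F_{m}K_{m}(f,\beta_{i})\), and consider \(E\cap M_{\infty}\). It has degree at most \([E:K]\) over \(K\), so it lies in some fixed \(M_{m_{0}}\). For \(m\ge m_{0}\), set
\[
L:=E\,F_{m}\,(E\cap M_{\infty})=E F_{m}.
\]
Linear disjointness of the \(F_{m}K_{m}(f,\beta_{i})\) over \(F_{m}\) is equivalent to the Galois group of \(M_{m}/F_{m}\) being the product of the individual Galois groups. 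Restriction gives an isomorphism between \(\mathrm{Gal}(EM_{m}/L)\) and the subgroup of \(\mathrm{Gal}(M_{m}/F_{m})\) fixing \(L\cap M_{m}\). So the product structure is preserved provided \(L\cap M_{m}\) is itself a compositum of subfields of the individual factors.

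\textbf{Step 3: conditions (1) and (2).} To arrange the proviso in Step 2, I would enlarge \(F_{m}\) once more by the finite field \(E\cap M_{m_{0}}\) projected into each factor. This replaces \(L\) by a finite extension of \(E\), which is allowed by the definition. Condition (2) then follows from Step 2. Condition (1), namely \(L\subsetneq L_{m}(f,\beta_{i})\), follows because \([L_{m}(f,\beta_{i}):L]\) is at least \([F_{m}K_{m}(f,\beta_{i}):F_{m}]\) divided by a constant depending only on \([E:K]\), and this tends to infinity by Step 1. Since this holds for all large \(m\), we obtain arbitrarily large admissible \(m\).

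\textbf{Main obstacle.} I expect the hard part to be the base-change step. Linear disjointness is not automatically inherited by arbitrary finite extensions \(E\), because \(E\) may meet the compositum in a ``diagonal'' subfield. The first thing I would try is the group-theoretic reduction above. If \(E\) produces a diagonal subgroup, I would replace \(m\) by a larger \(m\), using that the kernels of the level-restriction maps are products of wreath-type factors. These are untouched by the bounded field \(E\cap M_{\infty}\), so the intersection is absorbed at a lower level.
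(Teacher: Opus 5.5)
\textbf{There is a gap, and it is in Step 1, which carries the whole argument.} What \cite[Theorem 5.4]{HanTucker} provides, and what the paper uses, is only this: each $L^{(i)}:=K_{\infty}(f,\beta_i)\cap\prod_{j\neq i}K_{\infty}(f,\beta_j)$ is finite over $K$.

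Your lemma asserts something else, namely linear disjointness over a base $F_m\supseteq\mu_{q^m}$. That is essentially condition (2) for $E=K$, i.e.\ the statement itself, and it does not follow from the finiteness result.

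\begin{itemize}
\item The roots of unity $\mu_{q^m}$ come from the power map $x^q$. For $x^q+c$ they need not lie in the towers. For $f=x^2-6$ and $\beta=2$, all iterated preimages are real, so $K_\infty(f,2)$ is totally real and $i\notin K_\infty(f,2)$.
\item Adjoining a field outside the towers can create exactly the diagonal intersections you worry about later. For example, $\mathbb{Q}(\sqrt2)$ and $\mathbb{Q}(\sqrt3)$ are disjoint over $\mathbb{Q}$ but coincide after adjoining $\sqrt6$.
\end{itemize}

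The missing step is the paper's. Drop $\mu_{q^m}$ and take the fixed finite field $L:=\prod_k L^{(k)}$.
\begin{itemize}
\item Since $L^{(k)}\subseteq K_\infty(f,\beta_k)$ for $k\neq i$, and $L^{(i)}\subseteq\prod_{j\neq i}K_\infty(f,\beta_j)$, we get $L\subseteq\prod_{j\neq i}K_\infty(f,\beta_j)$. Also $L\supseteq L^{(i)}$.
\item The modular law in the Galois group of the compositum then gives $L\,K_\infty(f,\beta_i)\cap\prod_{j\neq i}K_\infty(f,\beta_j)=L$.
\item Hence $L_m(f,\beta_i)\cap\prod_{j\neq i}L_m(f,\beta_j)=L$ for every $m$, and Remark \ref{rem:linear-disjoint2}(1) gives condition (2).
\item Condition (1) holds for large $m$, because $K_\infty(f,\beta_i)/K$ is infinite and $L/K$ is finite.
\end{itemize}

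\textbf{With that in hand, your Steps 2--3 are unnecessary.} The Han--Tucker hypotheses are stable under replacing $K$ by a finite extension $E$. A prime $\mathfrak{P}$ of $E$ above $\mathfrak{p}$ still has $\nu_{\mathfrak{P}}(c)>0$, and the dynamical conditions do not depend on the field. So the same construction over $E$ gives the required $L$. The paper's proof leaves this implicit, so your attention to $E$ is well placed.

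If you do want to base-change from $K$, two points need correcting. Write $M_m=\prod_i M_i$ with $M_i:=F\,K_m(f,\beta_i)$, $G_i:=\mathrm{Gal}(M_i/F)$, and $H:=\mathrm{Gal}(M_m/L\cap M_m)\le\prod_i G_i$.
\begin{itemize}
\item The fields to adjoin are the $N_i\subseteq M_i$ with $\mathrm{Gal}(M_i/N_i)=H\cap G_i$. These have degree at most $[\prod_iG_i:H]$ over $F$. Adjoining the fields attached to the projections $\pi_i(H)$, namely $M_i\cap L$, changes nothing.
\item Your fallback of increasing $m$ cannot work. For $m\ge m_0$, the level-$m$ group is the full preimage of $H_{m_0}$ under a product of surjective restriction maps. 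If it were a product, its image $H_{m_0}$ would be one too. So a diagonal subfield at level $m_0$ stays diagonal at every higher level.
\end{itemize}
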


\begin{proof}
As we discussed earlier, one may assume without loss of generality that there are no distinct \(j, k\) and positive \(m\) such that \(f^{m}(\beta_{j}) = \beta_{k}\). Then, \cite[Theorem 5.4]{HanTucker} states that for every \(1 \leq i \leq t\), 
\begin{equation} \label{eq:index}
L^{(i)} := K_{\infty}(f, \beta_{i}) \cap  \prod_{j \neq i} K_{\infty}(f, \beta_{j})
\end{equation}
is a finite extension of \(K\). If we define \(L := \prod_{i=1}^{t} L^{(i)}\), then for every \(i\in\{1, 2, \ldots, t\}\), we have \[L_{m}(f, \beta_{i}) \cap  \prod_{j \neq i} L_{m}(f, \beta_{j}) = L \text{ for every } m \geq 1.\]
\end{proof}

\begin{remark}
\begin{enumerate}
    \item Note that almost linear-disjointness, say of \(\big(\varphi, \{\alpha_{i}\}_{i=1}^{k}\big)\), is a milder requirement than the fact that \[K_{m}(\varphi, \alpha_{i}) \cap \prod_{j\neq i} K_{m}(\varphi, \alpha_{j}) \text{ be of finite index over } K \text{ for every } 1 \leq i \leq k.\] For instance, take \(\varphi(x) = x^{2}\) \(\alpha_{1} = 2, \alpha_{2} = 3\) and \(\alpha_{3} = 5\) over \(K = \mathbb{Q}\). In this case,
\[ \mathbb{Q}_{n}(\varphi, \alpha_{i}) = \mathbb{Q}(\zeta_{2^{n}}, \alpha_{i}^{1/2^{n}}), \text{ where } \zeta_{2^{n}} \text{ is the primitive root of unity of order } 2^{n}. \] We see that the intersections of any \(\mathbb{Q}_{n}(\varphi, \alpha_{i})\) with \(\prod_{j\neq i} \mathbb{Q}_{n}(\varphi, \alpha_{j})\) contains \(\mathbb{Q}(\zeta_{2^{n}})\) and hence becomes arbitrarily large over \(\mathbb{Q}\) although the system \(\{\varphi, \{2, 3, 5\}\) is almost linearly-disjoint.
    
    \item The condition that none of the \(\beta_{i}\) lie in each other's orbit, as supposed in Theorem \ref{thm:HanTucker}, is not essential for any of the results established in this article. We will only use this assumption as an intermediate step in establishing our main results. Eventually, we show in Section \ref{sec:augmentation} that all the results hold when orbits intersect too. 
\end{enumerate}

\end{remark}

\color{black}

\begin{remark}\label{rem:linear-disjoint2}
We will also use the following two classical facts regarding linear-disjointness of field extensions in this article. 
\begin{enumerate}
    \item Finitely many Galois extensions $M_{1}, M_{2}, \ldots, M_{n}$ of \(L\) are linearly disjoint over \(L\) if and only if \[ M_{i} \cap \prod_{j \neq i} M_{j} = L \text{  for every  } 1 \leq i \leq n. \]
    
    \item Suppose that we have finitely many Galois extensions $M_{1}, M_{2}, \ldots, M_{n}$ of a number field $L$ with respective Galois groups $G_{1}, G_{2}, \ldots, G_{n}$ and let $G$ be the Galois group of the compositum extension $\prod_{i=1}^{n} M_{i}$. It is well-known that if $M_{1}, M_{2}, \ldots, M_{n}$ are linearly disjoint over $L$, then we have \[ G \cong G_{1} \times G_{2} \times \cdots \times G_{n}. \]
\end{enumerate}
\end{remark}
For a finite group \(G\) acting on a finite set \(X\), we will say that an element \(g\in G\) is a \textit{derangement} of \(X\) if the set \(X^{g}\) of fixed points of \(g\) is empty.

\subsection{Density in Number Fields}
Given any subset $S$ of primes of \(K\), the Dirichlet density of $S$ is defined as 
\[ \delta_{K}(S) = \lim_{s\rightarrow 1^{+}} \frac{\sum_{\mathfrak{p}\in S} \frac{1}{(N\mathfrak{p})^{s}}}{\sum_{\mathfrak{p}} \frac{1}{(N\mathfrak{p})^{s}}} =  \lim_{s\rightarrow 1^{+}} \frac{\sum_{\mathfrak{p}\in S} \frac{1}{(N\mathfrak{p})^{s}} }{\log\big( \frac{1}{s-1}\big)} \text{ , if the limit exists. }\] Here, $N\mathfrak{p} = [\mathfrak{o}_{K}:\mathfrak{p}]$. Similarly, the natural density of $S$ is defined to be 
\[ d_{K}(S) = \lim_{N\rightarrow\infty} \frac{\lvert \mathfrak{p}\in S : N\mathfrak{p} \leq N \rvert }{ \lvert \mathfrak{p}  : N\mathfrak{p} \leq N \rvert} \text{ , if the limit exists. } \] It is well known that if $d_{K}(S)$ exists for a subset $S$ of primes of \(K\), then $\delta_{K}(S)$ also exists and $\delta_{K}(S) = d_{K}(S)$. The results in this note hold for both the notions of density above; so, we use \textit{density} without qualification. We will also employ the classical non-effective version of the Chebotarev Density Theorem; for more details, see, for example, \cite[Theorem 6.3.1]{FrJar} or \cite{SL96}.

\begin{theorem}\label{thm:chebotarev}
    Let $L/K$ be a Galois extension of number fields, $G$ be the Galois group of this extension and $\mathcal{C}$ be a conjugacy class in $G$. Then, the density of the subset of primes \(\mathfrak{p}\) of \(K\) such that \(\Big[ \frac{L/K}{\mathfrak{p}}\Big] = \mathcal{C}\) is equal to $\frac{\lvert \mathcal{C} \rvert}{\lvert G \rvert}$. 
\end{theorem}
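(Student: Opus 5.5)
The plan is the classical one: first reduce to the case of an abelian (indeed cyclic) extension by Deuring's reduction, then settle the abelian case with class field theory and the analytic theory of Hecke \(L\)-functions. I will work with Dirichlet density throughout. Ramified primes form a finite set and primes of degree \(>1\) over \(\mathbb{Q}\) contribute a convergent sum to \(\sum N\mathfrak{p}^{-s}\), so both can be discarded freely.

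\textbf{Reduction to the cyclic case.} Fix \(\sigma\in\mathcal{C}\), let \(H=\langle\sigma\rangle\), let \(F=L^{H}\) be the fixed field, and let \(f=|H|\).
\begin{enumerate}
\item Count unramified primes \(\mathfrak{P}\) of \(L\) with Frobenius \(\left[\frac{L/K}{\mathfrak{P}}\right]=\sigma\). Each unramified prime \(\mathfrak{p}\) of \(K\) with Frobenius class \(\mathcal{C}\) has exactly \(|C_G(\sigma)|=|G|/|\mathcal{C}|\) primes \(\mathfrak{P}\) above it with Frobenius equal to \(\sigma\).
\item Each such \(\mathfrak{P}\) has decomposition group \(H\). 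So \(\mathfrak{q}=\mathfrak{P}\cap F\) has residue degree \(1\) over \(\mathfrak{p}\), and \(\mathfrak{q}\) is inert in \(L/F\) with Frobenius \(\sigma\).
\item Conversely, every degree-one prime \(\mathfrak{q}\) of \(F\) with \(\left[\frac{L/F}{\mathfrak{q}}\right]=\sigma\) arises from exactly one such \(\mathfrak{P}\).
\item Comparing Dirichlet series (degree-\(1\) primes carry all the density) gives
\[
\delta_K(\{\mathfrak{p}:\mathcal{C}\})\cdot\frac{|G|}{|\mathcal{C}|}=\delta_F\Big(\Big\{\mathfrak{q}:\Big[\tfrac{L/F}{\mathfrak{q}}\Big]=\sigma\Big\}\Big)\cdot[\,\text{number of } \mathfrak{q} \text{ per } \mathfrak{p}\,]^{-1}\text{-bookkeeping}.
\]
\end{enumerate}
Once the bookkeeping is done carefully, the claim \(|\mathcal{C}|/|G|\) follows from the cyclic statement that the density for \(\sigma\) in \(L/F\) is \(1/f\). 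This double counting is the step where I expect off-by-a-factor errors, so I will verify it on an example such as \(S_3\).

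\textbf{The abelian case.} Let \(L/F\) be abelian with group \(A\). By Artin reciprocity the Artin map factors through a ray class group \(\mathrm{Cl}_{\mathfrak{m}}(F)\twoheadrightarrow A\). For each character \(\chi\) of \(A\), pulled back to a Hecke character, form \(L(s,\chi)\). Then
\[
\log L(s,\chi)=\sum_{\mathfrak{q}}\chi(\mathfrak{q})N\mathfrak{q}^{-s}+O(1)\quad\text{as } s\to1^+.
\]
Character orthogonality gives
\[
\sum_{\mathfrak{q}\mapsto\sigma}N\mathfrak{q}^{-s}=\frac{1}{|A|}\sum_\chi\overline{\chi(\sigma)}\log L(s,\chi)+O(1).
\]
For \(\chi=1\), the Dedekind zeta function \(\zeta_F\) has a simple pole at \(s=1\) (Hecke), contributing \(\log\frac{1}{s-1}\). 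So the density \(1/|A|\) follows once \(\log L(s,\chi)\) stays bounded as \(s\to1^+\) for every \(\chi\neq1\).

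\textbf{Main obstacle.} The key step is the non-vanishing \(L(1,\chi)\neq0\) for nontrivial \(\chi\). I would use the factorization
\[
\zeta_L(s)=\prod_\chi L(s,\chi).
\]
The left side has only a simple pole at \(s=1\), and only \(\chi=1\) contributes a pole on the right. Each \(L(s,\chi)\) with \(\chi\neq1\) is holomorphic at \(1\) by Hecke's analytic continuation. Hence no nontrivial factor can vanish there, since a zero would cancel the pole of \(\zeta_F\).

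Combining this with the reduction completes the proof. In the paper itself this theorem is quoted from \cite{FrJar,SL96}, and I would simply cite it.
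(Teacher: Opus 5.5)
The paper gives no proof of this statement. It quotes it as a black box from \cite{FrJar} and \cite{SL96}, so your closing remark that you would simply cite it is exactly the paper's treatment. Your outline is the standard proof: Deuring's reduction to the cyclic case, then Artin reciprocity and $L(1,\chi)\neq 0$ via $\zeta_L=\prod_\chi L(s,\chi)$. The abelian half is fine. The reduction, however, contains a concrete error, and your unfinished Step~4 does not repair it.

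In Step~1, the number of primes $\mathfrak{P}\mid\mathfrak{p}$ whose Frobenius is exactly $\sigma$ is not $|C_G(\sigma)|$. Suppose $\mathfrak{P}_0\mid\mathfrak{p}$ has Frobenius $\sigma$. The primes above $\mathfrak{p}$ are the $g\mathfrak{P}_0$, indexed by the cosets $gH$ of the decomposition group $H=\langle\sigma\rangle$. The prime $g\mathfrak{P}_0$ has Frobenius $g\sigma g^{-1}$, and this depends only on $gH$ because $H\subseteq C_G(\sigma)$. So the correct count is $|C_G(\sigma)|/f=|G|/(f|\mathcal{C}|)$.

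Your number already fails the $S_3$ test you propose. For $\sigma=(123)$ there are only $|G|/f=2$ primes above $\mathfrak{p}$ in total, whereas you claim $3$ with Frobenius $\sigma$. The whole content of Step~4 is this fibre size, so your count would turn the cyclic input $1/f$ into the wrong answer $|\mathcal{C}|/(f|G|)$.

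With the correct count, Steps~2 and~3 give the following. The map $\mathfrak{q}\mapsto\mathfrak{q}\cap\mathfrak{o}_K$ sends the absolutely degree-one primes of $F$ with $\mathrm{Frob}_{L/F}(\mathfrak{q})=\sigma$ onto the absolutely degree-one primes of $K$ in the class $\mathcal{C}$. All fibres have size $|G|/(f|\mathcal{C}|)$, and $N\mathfrak{q}=N\mathfrak{p}$. Hence
\[
\sum_{\mathfrak{q}:\ \mathrm{Frob}_{L/F}(\mathfrak{q})=\sigma} N\mathfrak{q}^{-s}
=\frac{|G|}{f\,|\mathcal{C}|}\sum_{\mathfrak{p}:\ \mathrm{Frob}_{L/K}(\mathfrak{p})\in\mathcal{C}} N\mathfrak{p}^{-s}+O(1)\qquad (s\to1^{+}).
\]
Dividing by $\log\frac{1}{s-1}$, which normalises both $\delta_F$ and $\delta_K$, gives $\frac{1}{f}=\frac{|G|}{f|\mathcal{C}|}\,\delta_K$. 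That is, $\delta_K=|\mathcal{C}|/|G|$.

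One smaller caveat: this argument yields Dirichlet density only. The paper uses ``density'' to cover natural density as well. That version needs $L(s,\chi)\neq 0$ on the whole line $\mathrm{Re}(s)=1$ together with a Tauberian theorem, although Dirichlet density is all the paper's applications actually use.
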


\section{Some Preliminary Facts}\label{sec:prelim-result}
We will need the following lemma that roughly states that if \(\alpha\) is not periodic modulo a prime \(\mathfrak{p}\) of good reduction, then given a large enough extension \(L\), \(\big(\varphi^{m}(x) - \alpha\big)\) will fail to have any root in \(L_{\mathfrak{q}}\) for any large enough \(m\). This lemma is taken from \cite[Lemma 3.3]{BGHKST} and also gives additional information regarding existence of an extension \(E\) such that for every extension \(L\) of \(E\), there is an unramified prime \(\mathfrak{q}\) with the same property. 

\begin{lemma}\label{lem:nonper-modp}
Let \(K\) be a number field, let \(\tilde{\mathfrak{p}}\) be a prime of \(\mathfrak{o}_{\overline{K}}\), and let \(\varphi:\mathbb{P}^{1}\longrightarrow\mathbb{P}^{1}\) be a rational function defined over \(K\) and of good reduction at \(\mathfrak{p}=\tilde{\mathfrak{p}}\cap\mathfrak{o}_{K}\) such that \(2\leq\deg\varphi<\operatorname{char}k_{\mathfrak{p}}\). Suppose that \(\alpha\in\mathbb{P}^{1}(K)\) is not periodic modulo \(\mathfrak{p}\). Then there exists a finite extension \(E\) of \(K\) with the following property: for any finite extension \(L\) of \(E\), there is an integer \(M\in\mathbb{N}\) such that for all \(m\geq M\) and all \(\beta\in\mathbb{P}^{1}(\overline{K})\) with \(\varphi^{m}(\beta)=\alpha\),
\begin{itemize}
\item[(i)] \(\mathfrak{r}\) does not ramify over \(\mathfrak{q}\), and
\item[(ii)] \([\mathfrak{o}_{L(\beta)}/\mathfrak{r}:\mathfrak{o}_{L}/\mathfrak{q}]>1\),
\end{itemize}
where \(\mathfrak{r}:=\tilde{\mathfrak{p}}\cap\mathfrak{o}_{L(\beta)}\), and \(\mathfrak{q}:=\tilde{\mathfrak{p}}\cap\mathfrak{o}_{L}\).
\end{lemma}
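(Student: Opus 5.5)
The plan is to exploit the fact that, modulo $\mathfrak{p}$, $\bar\alpha := r_{\mathfrak{p}}(\alpha)$ has finitely many preimages under $\varphi_{\mathfrak{p}}$ in $\mathbb{P}^{1}(k_{\mathfrak{p}})$, and to show that none of them lies very deep. Since $\bar\alpha$ is not $\varphi_{\mathfrak{p}}$-periodic, the backward orbit of $\bar\alpha$ inside the finite set $\mathbb{P}^{1}(k_{\mathfrak{p}})$ must terminate. Suppose instead that there were points $\bar\beta_m\in\mathbb{P}^{1}(k_{\mathfrak{p}})$ with $\varphi_{\mathfrak{p}}^{m}(\bar\beta_m)=\bar\alpha$ for arbitrarily large $m$. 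Take $m > |\mathbb{P}^{1}(k_{\mathfrak{p}})|$. Then the chain $\bar\beta_m,\varphi_{\mathfrak{p}}(\bar\beta_m),\dots,\bar\alpha$ repeats a point, and that repeated point is periodic. Consequently $\bar\alpha$, which lies in its forward orbit, is periodic, a contradiction.

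The same pigeonhole argument applies over any finite extension $k'$ of $k_{\mathfrak{p}}$. However, the required depth $M$ then depends on $k'$, which is why $M$ must be allowed to depend on $L$. This gives an $N$ (depending on the residue field) such that $\varphi_{\mathfrak{p}}^{-m}(\bar\alpha)$ contains no $k'$-rational points for $m\ge N$.

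Next I will choose $E$ to control ramification, which is the main obstacle. Since $\deg\varphi<\operatorname{char}k_{\mathfrak{p}}$, the reduced map $\varphi_{\mathfrak{p}}$ is separable. Its critical points are therefore finite in number, each with ramification index less than $p$, so all ramification is tame. My plan is to take $E$ to be the field generated over $K$ by lifts of the points of the finite tree of $\varphi_{\mathfrak{p}}$-preimages of $\bar\alpha$ of depth up to the point where the backward chains leave the critical orbit. Because $\bar\alpha$ is non-periodic, only finitely many critical values can appear in its backward orbit, and beyond a bounded depth no critical value $\varphi_{\mathfrak{p}}(\bar c)$ lies in $\varphi_{\mathfrak{p}}^{-n}(\bar\alpha)$. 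More precisely, a critical value on a backward chain at arbitrarily large depth would force, by the same pigeonhole argument applied to the finitely many critical points, a periodic point mapping to $\bar\alpha$, which is again a contradiction.

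With $E$ chosen, I prove (i) by a Hensel / Newton polygon argument. Take $\beta$ with $\varphi^{m}(\beta)=\alpha$ and let $\bar\gamma=r(\varphi^{m-n}(\beta))$ for the appropriate large depth $n$. The local extension generated by successive preimages is then obtained by solving equations $\varphi(y)=z$ where $\bar z$ is not a critical value. Hensel's lemma makes each such step unramified, while the bounded initial portion, which may contain critical values, is absorbed into $E$ by adjoining enough roots and using Abhyankar's lemma for the tame ramification. This is the step I expect to be the most delicate, since one has to check that tame ramification is actually killed by a finite base extension that is independent of $m$.

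Finally, for (ii), fix $L\supseteq E$ and let $k'=\mathfrak{o}_L/\mathfrak{q}$. Choose $M$ using the first paragraph for $k'$. If the residue degree $[\mathfrak{o}_{L(\beta)}/\mathfrak{r}:k']$ were $1$, then $r(\beta)\in\mathbb{P}^{1}(k')$ would satisfy $\varphi_{\mathfrak{p}}^{m}(r(\beta))=\bar\alpha$ with $m\ge M$, contradicting the choice of $M$. Here we use good reduction to ensure that reduction commutes with $\varphi$.
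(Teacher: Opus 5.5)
Your argument is correct and is essentially the standard proof of this lemma from \cite[Lemma 3.3]{BGHKST}, which the paper quotes without reproving. Both proofs use the same steps: critical points of $\varphi_{\mathfrak{p}}$ appear at only boundedly many depths of the backward tree of $r_{\mathfrak{p}}(\alpha)$; take $E=K(\varphi^{-N}(\alpha))$ and get (i) from a Hensel-type unramifiedness argument at the non-critical residue points beyond depth $N$; get (ii) by pigeonhole over the finite residue field of $L$. The Abhyankar step you flag as delicate is unnecessary, provided $E$ is generated by the actual preimages $\varphi^{-N}(\alpha)$ rather than arbitrary lifts, because then the possibly ramified initial segment of every chain already lies in $L$.
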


As a consequence of Lemma \ref{lem:nonper-modp}, we obtain the following elementary combinatorial reformulation which will be essential in establishing our main result.

\begin{proposition}\label{prop:derangement-in-large-enough-extensions}
Let \(K\) be a number field, and let \(\varphi:\mathbb{P}^{1}\longrightarrow\mathbb{P}^{1}\) be a rational function defined over \(K\). Suppose that \(\alpha\in\mathbb{P}^{1}(K)\) is not \(\varphi\)-periodic. Then, there exists a finite extension \(E\) of \(K\) such that for any finite extension \(L\) of \(E\), there is a positive integer \(M\) such that for all \(m \geq M\), \(\mathrm{Gal}\big( L_{m}(\varphi, \alpha)/L\big)\) contains a conjugacy class of derangements on \(\varphi^{-m}(\alpha)\). 
\end{proposition}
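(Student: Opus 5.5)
The plan is to deduce this from Lemma \ref{lem:nonper-modp} together with Proposition \ref{cor:nonper-red}, and then read off a derangement from a Frobenius element.

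First, Proposition \ref{cor:nonper-red} gives a prime \(\mathfrak{p}\) of \(K\) with the following properties: \(\varphi\) has good reduction at \(\mathfrak{p}\), \(\operatorname{char}k_{\mathfrak{p}}>\deg\varphi\), and \(\alpha\) is not \(\varphi_{\mathfrak{p}}\)-periodic. We may impose the second condition because a positive-density set contains such primes. Fix a prime \(\tilde{\mathfrak{p}}\) of \(\mathfrak{o}_{\overline{K}}\) above \(\mathfrak{p}\), and let \(E\) be the finite extension supplied by Lemma \ref{lem:nonper-modp}. Given a finite extension \(L/E\), the lemma supplies \(M\). Now fix \(m\geq M\) and put \(N:=L_{m}(\varphi,\alpha)\), which is Galois over \(L\) as a splitting field. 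Let \(\mathfrak{q}=\tilde{\mathfrak{p}}\cap\mathfrak{o}_{L}\) and \(\mathfrak{P}=\tilde{\mathfrak{p}}\cap\mathfrak{o}_{N}\).

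The next step is to show that \(\mathfrak{P}\) is unramified over \(\mathfrak{q}\). The field \(N\) is the compositum of the fields \(L(\beta)\) for \(\beta\in\varphi^{-m}(\alpha)\). By (i), each \(\tilde{\mathfrak{p}}\cap\mathfrak{o}_{L(\beta)}\) is unramified over \(\mathfrak{q}\). The inertia group \(I\subseteq\mathrm{Gal}(N/L)\) of \(\mathfrak{P}\) therefore acts trivially on every \(L(\beta)\), since its image in the Galois closure of each subfield is the inertia there. Hence \(I\) fixes the compositum, so \(I\) is trivial. This gives a well-defined Frobenius element \(\sigma=\mathrm{Frob}_{\mathfrak{P}/\mathfrak{q}}\in\mathrm{Gal}(N/L)\).

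We then show that \(\sigma\) is a derangement. Suppose \(\sigma(\beta)=\beta\) for some \(\beta\in\varphi^{-m}(\alpha)\). Then \(\sigma\) lies in \(\mathrm{Gal}(N/L(\beta))\), and it generates the decomposition group of \(\mathfrak{P}\) over \(\mathfrak{q}\). So the decomposition group is contained in \(\mathrm{Gal}(N/L(\beta))\). It follows that \(\mathfrak{r}=\mathfrak{P}\cap\mathfrak{o}_{L(\beta)}\) has residue degree \(1\) over \(\mathfrak{q}\), which contradicts (ii). Concretely, \(f(\mathfrak{P}/\mathfrak{q})=|D|\) and \(f(\mathfrak{P}/\mathfrak{r})=|D\cap \mathrm{Gal}(N/L(\beta))|=|D|\).

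One technical point needs care. If \(\beta=\infty\), or if \(\beta\) is a point of \(\mathbb{P}^1\) handled projectively, the same argument goes through. Moreover, \(\mathrm{Gal}(N/L)\) acts on the multiset of roots of \(\varphi^{m}(x)-\alpha\), and the argument shows \(\sigma\) fixes no element of \(\varphi^{-m}(\alpha)\) viewed as a set.

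Finally, the derangement property is invariant under conjugation, because \(\tau\sigma\tau^{-1}\) fixes \(\tau\beta\) exactly when \(\sigma\) fixes \(\beta\). So the entire conjugacy class of \(\sigma\) consists of derangements. The only real obstacle is the passage from the unramifiedness of each individual \(L(\beta)\) to that of the compositum, together with the residue-degree bookkeeping. Both are standard: the first follows from the fact that a compositum of extensions unramified at a prime is unramified there. No hypothesis \(m\) large is needed beyond \(m\geq M\).
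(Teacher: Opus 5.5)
Your proposal is correct and follows essentially the paper's route: take a good prime \(\mathfrak p\) from Proposition \ref{cor:nonper-red}, apply Lemma \ref{lem:nonper-modp}, and observe that the Frobenius at \(\mathfrak q=\tilde{\mathfrak p}\cap\mathfrak o_L\) in \(\mathrm{Gal}(L_m(\varphi,\alpha)/L)\) fixes no root, so its conjugacy class consists of derangements. Your inertia argument (the compositum is unramified) and your residue-degree computation (a fixed root would force residue degree \(1\), contradicting (ii)) make explicit the steps that the paper's proof only sketches.
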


\begin{proof}
    Since $\alpha$ is not \(\varphi\)-periodic, Corollary \ref{cor:nonper-red} gives us a positive density of primes \(\mathfrak{p}\) of \(\mathfrak{o}_{K}\) such that \(\varphi\) has good reduction at \(\mathfrak{p}\), \(\mathrm{char}(K_{\mathfrak{p}}) > \mathrm{deg}(\varphi)\) and \(\alpha (\operatorname{mod} \mathfrak{p})\) is not periodic under reduction \(\varphi_{\mathfrak{p}}\). Then, Lemma \ref{lem:nonper-modp} gives that there exists a finite extension \(E\) of K such that for any finite extension \(L\) of \(K\), there is a positive integer \(M\) such that for every \(m \geq M\) such that \(\varphi^{m}(x) - \alpha\) has no root in \(L\). Fix the \(E\) obtained as above from now on and also note that for any such \(m\), \(\varphi^{m}(x) - \alpha\) has bad reduction only at finitely many primes. So, we have that for every extension \(L\) of \(E\), there is a positive \(M\) such that for every \(m \geq M\), \(\varphi^{m}(x) - \alpha\) fails to have root modulo a positive density of primes \(\mathfrak{p}\) of good reduction. In other words, for every field extension \(L\) of \(E\), there exists a large enough \(m\) and a prime \(\mathfrak{q}\) in \(L\) such that the Artin conjugacy class \(\Big[ \frac{L_{m}(\varphi, \alpha)/L}{\mathfrak{q}} \Big]\) corresponding to \(\mathfrak{q}\) in the group \(\mathrm{Gal}\big(L_{m}(\varphi, \alpha)/L\big)\) does not fix any root of \(\big(\varphi^{m}(x) - \alpha\big)\), and hence this conjugacy class consists entirely of derangements on \(\varphi^{-m}(\alpha)\). 
\end{proof}

Now, we establish an analogue of Proposition \ref{prop:derangement-in-large-enough-extensions} to a finite set \(\mathcal{A} = \{\alpha_{i}\}_{i=1}^{k}\) of, none of which are \(\varphi\)-periodic. We will need an analogue for the field \(L_{m}(\varphi, \alpha)\) for a finite set \(\mathcal{A}\). Given a number field \(L\), \(\mathcal{A} = \{\alpha_{i}\}_{i=1}^{k} \subset\mathbb{P}^{1}(L)\) and a \(m \geq 1\), we define \(L_{m}(\varphi, \mathcal{A})\) to be the compositum \(\prod_{i=1}^{k} L_{m}\big(\varphi, \alpha_{i}\big)\).

\begin{proposition}\label{prop:derangement-finite-set}
    Let \(K\) be a number field, let \(\varphi:\mathbb{P}^{1}\longrightarrow\mathbb{P}^{1}\) be a rational function defined over \(K\) and let \( \mathcal{A} = \{\alpha_{i}\}_{i=1}^{k} \subset\mathbb{P}^{1}(K)\) be a finite subset, none of whose elements are \(\varphi\)-periodic modulo \(\mathfrak{p}\). Furthermore, assume that the set \( \{\alpha_{i}\}_{i=1}^{k}\) is almost \(\varphi\)-linearly disjoint. 
    
    Then, there is a finite extension \(E\) of \(K\) such that for every extension \(L\) of \(E\), there exists arbitrarily large positive \(m\), an extensions $L^{(m)}$ such that \(\mathrm{Gal}\Big((L^{(m)})_{m}(\varphi, \mathcal{A})/L^{(m)}\Big)\) contains a conjugacy class of derangements on \(\cup_{i=1}^{k} \big(\varphi^{-m}(\alpha_{i})\big)\). 
\end{proposition}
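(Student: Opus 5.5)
The plan is to combine Proposition \ref{prop:derangement-in-large-enough-extensions}, applied separately to each \(\alpha_{i}\), with the direct-product decomposition of Galois groups coming from almost linear disjointness (Remark \ref{rem:linear-disjoint2}).

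First, for each \(1\le i\le k\), Proposition \ref{prop:derangement-in-large-enough-extensions} gives a finite extension \(E_{i}\) of \(K\) with the following property: for every finite extension \(L\) of \(E_{i}\) there is an \(M_{i}(L)\) such that, for all \(m\ge M_{i}(L)\), \(\mathrm{Gal}(L_{m}(\varphi,\alpha_{i})/L)\) contains a derangement of \(\varphi^{-m}(\alpha_{i})\). Set \(E:=E_{1}\cdots E_{k}\).

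Now fix an extension \(L\) of \(E\). By almost \(\varphi\)-linear disjointness, applied with base field \(L\), there are arbitrarily large \(m\) and finite extensions \(L^{(m)}\supseteq L\) such that \(L^{(m)}_{m}(\varphi,\alpha_{1}),\dots,L^{(m)}_{m}(\varphi,\alpha_{k})\) are linearly disjoint over \(L^{(m)}\). Each of these fields is a splitting field, hence Galois over \(L^{(m)}\). By Remark \ref{rem:linear-disjoint2}(2),
\[
\mathrm{Gal}\big(L^{(m)}_{m}(\varphi,\mathcal{A})/L^{(m)}\big)\cong \prod_{i=1}^{k}\mathrm{Gal}\big(L^{(m)}_{m}(\varphi,\alpha_{i})/L^{(m)}\big).
\]
Under this isomorphism, the action on \(\varphi^{-m}(\alpha_{i})\) factors through the \(i\)-th coordinate. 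The sets \(\varphi^{-m}(\alpha_{i})\) are pairwise disjoint for distinct \(\alpha_{i}\). Hence if \(\sigma_{i}\) is a derangement of \(\varphi^{-m}(\alpha_{i})\) for each \(i\), the tuple \((\sigma_{1},\dots,\sigma_{k})\) is a derangement of \(\bigcup_{i}\varphi^{-m}(\alpha_{i})\). Its conjugacy class consists of derangements too, since conjugation preserves the property of being fixed-point-free.

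It therefore remains to produce each \(\sigma_{i}\). Since \(L^{(m)}\) is a finite extension of \(E_{i}\), Proposition \ref{prop:derangement-in-large-enough-extensions} supplies derangements once \(m\ge M_{i}(L^{(m)})\).

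The main obstacle is this circularity: \(L^{(m)}\) depends on \(m\), while the threshold \(M_{i}\) depends on the field. I would resolve it through the mechanism behind Proposition \ref{prop:derangement-in-large-enough-extensions}, namely Lemma \ref{lem:nonper-modp}, together with the observation that derangements persist at higher levels. Concretely, if \(\sigma\) deranges \(\varphi^{-m_{0}}(\alpha_{i})\), then any lift of \(\sigma\) to level \(m\ge m_{0}\) deranges \(\varphi^{-m}(\alpha_{i})\). Indeed, if \(\beta\) were fixed, then \(\varphi^{m-m_{0}}(\beta)\) would be fixed as well.

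So it suffices to produce, inside \(\mathrm{Gal}(L^{(m)}_{m}(\varphi,\alpha_{i})/L^{(m)})\), an element whose restriction to some lower level \(m_{0}\) is a derangement. I would use Lemma \ref{lem:nonper-modp} with a prime \(\mathfrak{p}\) modulo which \(\alpha_{i}\) is nonperiodic. It shows that the Frobenius at any unramified prime of \(L^{(m)}\) above \(\mathfrak{p}\), with residue degree controlled, has no fixed points on \(\varphi^{-m}(\alpha_{i})\) for large \(m\). The \(M\) coming from that lemma depends only on the residue field at \(\mathfrak{q}\), which can be arranged to stabilize.

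Failing that, I would use the freedom "arbitrarily large \(m\)" in the definition: replace \(L^{(m)}\) by the subfield it needs. Alternatively, I would note that linear disjointness over \(L^{(m)}\) is inherited when the fields are restricted to smaller levels, so that one may take \(m\) large relative to \(M_{i}(L^{(m')})\) for a fixed \(m'\). This step is where I expect the real difficulty.
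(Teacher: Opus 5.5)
Your outline is the paper's own proof: Proposition \ref{prop:derangement-in-large-enough-extensions} for each $\alpha_i$, then $E=E_1\cdots E_k$, then the direct-product splitting over $L^{(m)}$, then a product of derangement classes. The paper closes the argument by taking ``$m\ge\max_i M_i$ and the corresponding $L^{(m)}$''. That silently treats $M_i$ as independent of $L^{(m)}$. You are right that $M_i=M_i(L^{(m)})$ depends on a base field which itself depends on $m$. So the step producing a derangement of $\varphi^{-m}(\alpha_i)$ in $\mathrm{Gal}(L^{(m)}_m(\varphi,\alpha_i)/L^{(m)})$ is unproved, in the paper and in your proposal. None of your three repairs closes it:
\begin{itemize}
\item[(a)] In Lemma \ref{lem:nonper-modp} the threshold depends on the base through its residue field at $\tilde{\mathfrak p}$. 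Roughly, $m$ must exceed the depth of the part of the backward orbit of $\bar\alpha_i$ rational over that residue field. Since $\bar\alpha_i$ is not periodic, hence not exceptional, this depth is unbounded as the residue field grows. But $L^{(m)}$ is handed to you by the hypothesis, so you cannot make its residue degree above $\mathfrak p$ stabilize. Letting $\mathfrak p$ vary with $m$ only moves the dependence onto $\mathfrak p$.
\item[(b)] Passing to a subfield of $L^{(m)}$ loses linear disjointness. Over a smaller base the Galois group of the compositum is only a fibre product of the factors, and that is exactly where a simultaneous derangement can fail to exist.
\item[(c)] Restricting to a lower level $m'$ keeps the base $L^{(m)}$, so you would need $m'\ge M_i(L^{(m)})$, which is the same circle. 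Your correct observation that derangements persist upward removes the dependence on the level, not on the base field.
\end{itemize}

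The gap is substantive, because Definition \ref{def:linearly-disjoint-system} gives no control over $L^{(m)}$. For a finite extension $N$ of $K$, put $F=N_m(\varphi,\mathcal{A})$ and $U_i=\mathrm{Gal}\bigl(F/\prod_{j\ne i}N_m(\varphi,\alpha_j)\bigr)$. Suppose all $U_i\ne 1$, i.e.\ no $N_m(\varphi,\alpha_i)$ lies in the compositum of the others.
\begin{itemize}
\item[(i)] Choose $1\ne h_i\in U_i$. The $U_i$ are normal with pairwise trivial intersections, so they commute.
\item[(ii)] Elements of $U_j$ with $j\ne i$ act trivially on $N_m(\varphi,\alpha_i)$, while nontrivial elements of $U_i$ do not. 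Hence $H=\langle h_1\rangle\cdots\langle h_k\rangle\cong\prod_i\langle h_i\rangle$.
\item[(iii)] The field $L=F^{H}$ then satisfies conditions (1) and (2) of the definition, with $\mathrm{Gal}(L_m(\varphi,\alpha_i)/L)\cong\langle h_i\rangle$.
\end{itemize}
If $h_i$ fixes a point of $\varphi^{-m}(\alpha_i)$, this factor contains no derangement at all. So the factorwise derangements cannot be extracted from Proposition \ref{prop:derangement-in-large-enough-extensions} together with the hypothesis as stated. What is missing is uniformity in the base. Concretely, you need a single finite $L\supseteq E$ over which the level-$m$ fields are linearly disjoint for all large $m$; this is the form in which Theorem \ref{thm:HanTucker} is argued. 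With such an $L$, $M_i=M_i(L)$ is fixed and your argument goes through verbatim.
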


\begin{proof}
Since none of the \(\alpha_{i}\)'s are \(\varphi\)-periodic, Proposition \ref{prop:derangement-in-large-enough-extensions} gives for every \(1 \leq i \leq k\), the existence of extensions \(E_{i}\) such that for any finite extensions \(N_{i}\) of \(E_{i}\), there exists a positive integer \(M_{i}\) such that for all \(m \geq M_{i}\) the group \(\mathrm{Gal}\big((N_{i})_{m}(\varphi, \alpha_{i})/N_{i}\big)\) contains a conjugacy class of derangement of \(\varphi^{-m}(\alpha_{i})\). In this case, define \(E = \prod_{i=1}^{k} E_{i}\) to be the compositum of \(E_{i}\)'s. Then, the almost \(\varphi\)-linear disjointness of \(\{\alpha_{i}\}_{i=1}^{k}\) implies that there exists arbitrarily large \(m\) and extensions \(L^{(m)}\) of \(E\) such that the fields \[\big\{(L^{(m)})_{m}(\varphi, \alpha_{i}) \big\}_{i=1}^{k}\] are linearly disjoint over \(L^{(m)}\). 

Since \(L^{(m)}\) is an extension of \(E\), specifically of individual \(E_{i}\)'s, we have corresponding \(M_{i}\) such that for all \(m \geq M_{i}\), we have that \(\mathrm{Gal}\big((L^{(m)})_{m}(\varphi, \alpha_{i})/L^{(m)}\big)\) contains a conjugacy class \(\mathcal{C}_{i}\) of derangements on  \(\varphi^{-m}(\alpha_{i})\). For any \(m \geq \mathrm{max}\{M_{i}\}_{i=1}^{k}\) and the corresponding \(L^{(m)}\), we have that 
\begin{equation}\label{eq:directproduct-general}
   \mathrm{Gal}\Big((L^{(m)})_{m}(\varphi, \mathcal{A})/L^{(m)}\Big) \cong  \prod_{i=1}^{k} \mathrm{Gal}\Big((L^{(m)})_{m}(\varphi, \alpha_{i})/L^{(m)}\Big).
\end{equation}
Since each component on the RHS of \eqref{eq:directproduct-general} contains a conjugacy class \(\mathcal{C}_{i}\) such that every element \(\sigma_{i}\in\mathcal{C}_{i}\) acts a derangement of \(\varphi^{-m}(\alpha_{i})\), every element of the conjugacy class \[ \prod_{i=1}^{k} \mathcal{C}_{i} \subset \mathrm{Gal}\Big((L^{(m)})_{m}(\varphi, \mathcal{A})/L^{(m)}\Big) \] is derangement of the set \(\cup_{i=1}^{k}\big(\varphi^{-m}(\alpha_{i})\big)\). 
\end{proof}

To establish Theorem \ref{thm:main-intro} in its full generality, we will establish an analogue of Proposition \ref{prop:derangement-finite-set} for linearly disjoint system. Given a number field \(L\),  and finitely many subsets \(\mathcal{A}_{1}, \ldots, \mathcal{A}_{g}\) of \(\mathbb{P}^{1}(L)\) and a \(m \geq 1\), we define \(L_{m}(\{\varphi_{i}\}_{i=1}^{g}, \{\mathcal{A}_{i}\}_{i=1}^{g})\) to be the compositum \(\prod_{i=1}^{g} L_{m}(\varphi_{i}, \mathcal{A}_{i}).\)

\begin{proposition}\label{prop:derangement-finite-set-system}
   Let $K$ be a number field, \(g \in\mathbb{N}\) and \(\varphi_{1}, \ldots, \varphi_{g}: \mathbb{P}^{1}_{K} \longrightarrow \mathbb{P}^{1}_{K}\) be finitely many rational functions, each of degree at least $2$. For every \(1 \leq i \leq g\), let \(\mathcal{S}_{i} = \big\{\alpha_{ij} \big\}_{j=1}^{n_{i}}\subset\mathbb{P}^{1}(K)\) be a set consisting entirely of points that are not \(\varphi_{i}\)-periodic. Furthermore, suppose that the set \(\big\{(\varphi_{i}, \mathcal{S}_{i})\big\}_{i=1}^{g}\) forms an almost linearly-disjoint system. Then, there is a finite extension \(E\) of \(K\) such that for every extension \(N\) of \(E\), there exists arbitrarily large positive \(m\), and corresponding extensions $L = L^{(m)}$ such that \(\mathrm{Gal}\Big(L_{m}(\{\varphi_{i}\}_{i=1}^{g}, \{\mathcal{A}_{i}\}_{i=1}^{g})/L\Big)\) contains a conjugacy class of derangement of \(\bigcup_{i=1}^{k}\big( \bigcup_{j=1}^{n_{i}} \varphi^{-m}(\alpha_{ij})\big)\). 
\end{proposition}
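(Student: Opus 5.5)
The plan is to follow the proof of Proposition \ref{prop:derangement-finite-set} closely, adding one more layer of direct-product decomposition for the $g$ different maps. First I fix, for each pair $(i,j)$ with $1\le i\le g$ and $1\le j\le n_i$, the extension $E_{ij}$ supplied by Proposition \ref{prop:derangement-in-large-enough-extensions} applied to $\varphi_i$ and the non-periodic point $\alpha_{ij}$. This proposition says that for every finite extension $N'$ of $E_{ij}$ there is $M_{ij}=M_{ij}(N')$ such that, for all $m\ge M_{ij}$, the group $\mathrm{Gal}\big(N'_m(\varphi_i,\alpha_{ij})/N'\big)$ contains a conjugacy class of derangements on $\varphi_i^{-m}(\alpha_{ij})$. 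I then set $E:=\prod_{i,j}E_{ij}$, the compositum of these extensions.

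Next, let $N$ be an arbitrary extension of $E$. I apply Definition \ref{def:linearly-disjoint-system} with base field $N$. This produces arbitrarily large $m$ together with finite extensions $L=L^{(m)}$ of $N$ such that:
\begin{itemize}
\item for each fixed $i$, the fields $\{L_m(\varphi_i,\alpha_{ij})\}_{j}$ are linearly disjoint over $L$;
\item the composita $L_m(\varphi_i,\mathcal S_i)$, for $i=1,\dots,g$, are linearly disjoint over $L$.
\end{itemize}
All these fields are splitting fields, hence Galois over $L$. Applying Remark \ref{rem:linear-disjoint2}(2) twice, first across $i$ and then across $j$ inside each $i$, gives
\[
\mathrm{Gal}\Big(L_m(\{\varphi_i\},\{\mathcal S_i\})/L\Big)\cong\prod_{i=1}^{g}\prod_{j=1}^{n_i}\mathrm{Gal}\big(L_m(\varphi_i,\alpha_{ij})/L\big).
\]
This isomorphism is compatible with the actions on the disjoint union $\bigcup_{i,j}\varphi_i^{-m}(\alpha_{ij})$, since each factor acts only on the roots lying in its own field.

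Since $L\supseteq E\supseteq E_{ij}$, each factor $\mathrm{Gal}\big(L_m(\varphi_i,\alpha_{ij})/L\big)$ contains a conjugacy class $\mathcal C_{ij}$ of derangements of $\varphi_i^{-m}(\alpha_{ij})$, provided $m\ge M_{ij}(L)$. The product $\prod_{i,j}\mathcal C_{ij}$ is then a conjugacy class in the direct product. Each of its elements moves every point of every fibre, so it is a derangement of the full union. One subtlety: if some fibres $\varphi_i^{-m}(\alpha_{ij})$ overlap for different $j$, then linear disjointness forces such shared roots into $L$. This is excluded by requirement (1) together with the derangement property, so the union behaves as a disjoint union for this purpose.

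The main obstacle is the order of quantifiers. The threshold $M_{ij}(L)$ depends on $L=L^{(m)}$, and $L^{(m)}$ itself depends on $m$, so the argument risks being circular. My first remedy is to note that the proof of Proposition \ref{prop:derangement-in-large-enough-extensions} (via Lemma \ref{lem:nonper-modp}) really produces a derangement whenever some prime $\mathfrak q$ of $L$ above a fixed good prime $\mathfrak p$ leaves $\varphi_i^m(x)-\alpha_{ij}$ without roots in $L_{\mathfrak q}$. Because $\alpha_{ij}$ is not periodic modulo $\mathfrak p$, this holds as soon as $m$ exceeds a bound depending only on the residue degree of $\mathfrak q$. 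To control that residue degree uniformly, I would absorb the local condition into the choice of $L^{(m)}$ inside the definition, or else pass to a subsequence of $m$. If neither works cleanly, I would strengthen the reading of Definition \ref{def:linearly-disjoint-system} so that $L$ can be taken independent of $m$, as happens in the examples, including Theorem \ref{thm:HanTucker}. In that case $M_{ij}(L)$ is fixed before $m$ is chosen, and the argument goes through verbatim.
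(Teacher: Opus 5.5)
Your route is the paper's route. The paper applies Proposition \ref{prop:derangement-finite-set} for each $i$ and then splits the Galois group across $i$ using condition (3); you go straight to the pairs $(i,j)$ via Proposition \ref{prop:derangement-in-large-enough-extensions} and split twice. However, the step you flag as the main obstacle is a genuine gap, and none of your remedies closes it. The paper's proof passes over the same point: it says ``choose a large enough positive $m$ such that both the preceding statements hold and choose a corresponding $L^{(m)}$.'' But the threshold in Proposition \ref{prop:derangement-finite-set} depends on the field, and the field produced there is not the one produced by Definition \ref{def:linearly-disjoint-system}.

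Concretely, a derangement in $\mathrm{Gal}(L_m(\varphi_i,\alpha_{ij})/L)$ needs $m\ge M_{ij}(L^{(m)})$. The definition supplies $L^{(m)}$ existentially, so you cannot impose extra local conditions on it, and it can be outright bad, because condition (1) only forbids $L^{(m)}$ from containing the whole fibre. Suppose $N_m(\varphi_1,\alpha_{11})$ and $N_m(\varphi_1,\alpha_{12})$ are nontrivial and linearly disjoint over $N$, with groups $G_1,G_2$. Suppose also that $\beta\in\varphi_1^{-m}(\alpha_{11})$ satisfies $N(\beta)\neq N_m(\varphi_1,\alpha_{11})$. Then $L=N(\beta)$, the fixed field of $\mathrm{Stab}(\beta)\times G_2$, satisfies (1) and (2). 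Yet $\mathrm{Gal}(L_m(\varphi_1,\alpha_{11})/L)$ fixes $\beta$ and contains no derangement at all, so any threshold $M_{11}(L)$ exceeds $m$.

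Passing to a subsequence of $m$ does not help, because the residue fields of $L^{(m)}$ above a fixed $\mathfrak{p}$ may grow with $m$. In the paper's own example $\varphi(x)=x^{2}$, $\mathcal{S}=\{3,5,7\}$, linear disjointness over $L^{(m)}$ forces $\zeta_{2^m}\in L^{(m)}$. So the residue fields above any fixed odd $p$ are unbounded, and Lemma \ref{lem:nonper-modp} gives no $m$-uniform threshold.

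The same example shows your last remedy strengthens the hypothesis rather than reading it: no single finite $L$ works for all large $m$ there, contrary to your remark about the examples. Under that stronger hypothesis (one $L$ satisfying (1)--(3) at arbitrarily large levels), your argument is correct verbatim. Under the stated definition, you need an additional input showing that each factor over the particular field $L^{(m)}$ contains a derangement. Proposition \ref{prop:derangement-in-large-enough-extensions} alone cannot supply this.
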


\begin{proof}
Let \(1 \leq i \leq g\). Since none of the \(\alpha_{ij}\)'s are \(\varphi\)-periodic, Proposition \ref{prop:derangement-finite-set} gives a finite extension \(E_{i}\) of \(K\) such that for every extension \(N_{i}\) of \(E_{i}\), there exists arbitrarily large positive integer \(m\) and extensions \(L^{(m)}\) such that \(\mathrm{Gal}\Big((L^{(m)})_{m}(\varphi_{i}, \mathcal{S}_{i})/L^{(m)}  \Big)\) contains a conjugacy class of derangement of \(\cup_{i=1}^{n_{i}} \varphi^{-m}(\alpha_{ij})\). 

Define \(E = \prod_{i=1}^{g} E_{i}\) to be the compositum of \(E_{i}\)'s and \(N\) be a finite extension of \(E\). Since the system \(\big\{(\varphi_{i}, \mathcal{S}_{i})\big\}_{i=1}^{g}\) is almost linearly-disjoint, for every \(1 \leq i \leq g\), there exists arbitrarily large positive integer \(m\) and corresponding finite extensions \(L := L^{(m)}\) of \(N\) such that the the compositum fields \[ L_{m}(\varphi_{i}, \mathcal{S}_{i}) = \prod_{\alpha\in\mathcal{S}_{i}} L_{m}(\varphi, \alpha)  (1 \leq i \leq g) \] are linearly disjoint over \(L\). Now, choose a large enough positive \(m\) such that both the preceding statements hold and choose a corresponding $L^{(m)}$. Then, by the linear disjointness of the compositum fields, we have that 

\begin{equation}\label{eq:direct-product-system}
    \mathrm{Gal}\Big((L^{(m)})_{m}(\{\varphi_{i}\}_{i=1}^{g}, \{\mathcal{A}_{i}\}_{i=1}^{g})/L^{(m)}\Big) \cong \prod_{i=1}^{g} \mathrm{Gal}\Big((L^{(m)})_{m}(\varphi, \mathcal{S}_{i})/L^{(m)}\Big).
\end{equation}
Since for every \(1 \leq i \leq g\), \(\mathrm{Gal}\Big((L^{(m)})_{m}(\varphi_{i}, \mathcal{S}_{i})/L^{(m)}  \Big)\) contains a conjugacy class \(\mathcal{C}_{i}\) of derangements on \(\cup_{i=1}^{n_{i}} \varphi^{-m}(\alpha_{ij})\), \eqref{eq:direct-product-system} implies that the LHS contains a conjugacy class \(\mathcal{C}_{1} \times \ldots \times \mathcal{C}_{g}\) of derangements on \(\bigcup_{i=1}^{k}\big( \bigcup_{j=1}^{n_{i}} \varphi^{-m}(\alpha_{ij})\big)\).
\end{proof}

\section{Proof of Theorem \ref{thm:multimultiperiodic}, Theorem \ref{thm:multiperiodic} and Corollary \ref{cor:multiell-order}}\label{sec:Proofs-Main-Results}
\subsection{Proof of Theorem \ref{thm:multiperiodic} and \ref{thm:multimultiperiodic}}
Since Theorem \ref{thm:multiperiodic} is a special case of Theorem \ref{thm:multimultiperiodic} when \(g = 1\), we establish the latter. 
Proposition \ref{prop:derangement-finite-set-system} implies that there is an extension \(E\) of \(K\) such that for every extension \(L\) of \(E\), there exists arbitrarily large \(m\) and a corresponing field extension \(L^{(m)}\) of \(L\) such that the group \(\mathrm{Gal}\Big((L^{(m)})_{m}(\{\varphi_{i}\}_{i=1}^{g}, \{\mathcal{A}_{i}\}_{i=1}^{g})/L^{(m)}\Big)\) contains a conjugacy class \(\mathcal{C}\) of derangement of \(\bigcup_{i=1}^{k}\big( \bigcup_{j=1}^{n_{i}} \varphi^{-m}(\alpha_{ij})\big)\). Let \(m\) be any such arbitrarily large \(m\) and \(L := L^{(m)}\) be the corresponding extension. Then, the subset 
\[ \mathcal{S}_{m} := \Bigg\{ \mathfrak{q} : \Bigg[ \frac{(L^{(m)})_{m}(\{\varphi_{i}\}_{i=1}^{g}, \{\mathcal{A}_{i}\}_{i=1}^{g})/L^{(m)}}{\mathfrak{q}} \Bigg] = \mathcal{C}  \Bigg\} \] of primes of \(L^{(m)}\) has positive density as a consequence of Theorem \ref{thm:chebotarev}. Since \(\prod_{i=1}^{g} \prod_{j=1}^{n_{i}} \big(\varphi^{m}(x) - \alpha_{ij}\big)\) has bad-reduction only at finitely many primes in \(\mathcal{S}_{m}\), we can assume without loss of generality that \(\mathcal{S}_{m}\) does not have any prime of bad reduction. Since the Frobenius corresponding to \(\mathfrak{q}\in\mathcal{S}_{m}\) does not fix any element of \(\bigcup_{i=1}^{k}\big( \bigcup_{j=1}^{n_{i}} \varphi^{-m}(\alpha_{ij})\big)\), we have that \(\prod_{i=1}^{g} \prod_{j=1}^{n_{i}} \big(\varphi^{m}(x) - \alpha_{ij}\big)\) does not have root modulo \(\mathfrak{q}\) for any \(\mathfrak{q}\in\mathcal{S}_{m}\). Therefore, the subset \[ \mathcal{T}_{m} := \big\{ \mathfrak{q} \cap \mathfrak{o}_{K} : \mathfrak{q} \in \mathcal{S}_{m}  \big\}\] of primes of \(K\) also has positive density and \(\prod_{i=1}^{g} \prod_{j=1}^{n_{i}} \big(\varphi^{m}(x) - \alpha_{ij}\big)\) does not have root modulo \(\mathfrak{p}\) for every \(\mathfrak{p}\in\mathcal{T}_{m}\). Therefore, \(\alpha_{ij}\) are not periodic modulo \(\mathfrak{p}\) for a positive density of primes \(\mathfrak{p}\) for any \(1 \leq i \leq g\) and \(1 \leq j \leq n_{i}\). $\square$

\subsection{Proof of Corollary \ref{cor:multiell-order}} Since for every \(1 \leq i \leq g\), none of the points in the set \(\{Q_{ij}\}_{j=1}^{n_{i}}\) are torsion, same is true for the points \(\{[q^{n-1}](Q_{ij})\}_{j=1}^{n_{i}}\). Therefore, the set \(\{\alpha_{ij}\}_{j=1}^{n_{i}} = \big\{x\big([q^{n-1}](Q_{ij})\big)\big\}_{j=1}^{n_{i}}\) consists only of points that are not \(\varphi_{i}\)-preperiodic as a subset of $\mathbb{P}^{1}(K)$ for any \(1 \leq i \leq g\). Since \(x \circ [q] = \varphi \circ x\), and the system \(\big\{\big(\varphi_{i}, \{x(Q_{ij})\}_{j=1}^{n_{i}}\big)\big\}_{i=1}^{g}\) is assumed to be almost linearly-disjoint, Theorem \ref{thm:multimultiperiodic} gives that there exists a positive density of primes \(\mathfrak{p}\) of \(K\) such that for every \(1 \leq i \leq g\) and every \(1 \leq j \leq n_{i}\), \(\alpha_{ij}(\operatorname{mod} \mathfrak{p})\) is not \((\varphi_{i})_{\mathfrak{p}}\)-periodic. Therefore, using \(x \circ [q] = \varphi \circ x\), we have that for all \(m \geq 1\), \[ [q^{m+n-1}](Q_{ij})_{\mathfrak{p}} \neq [q^{n-1}](Q_{ij})_{\mathfrak{p}} \text{  for every } 1 \leq i \leq k \text{ and } 1 \leq j \leq n_{i}, \] which establishes that the \(q\)-primary part of the order of \((Q_{ij})_{\mathfrak{p}}\) must be divisible by $q^{r}$ for some $r \geq n$. Since we have a positive density of such primes \(\mathfrak{p}\) and each of the \(E_{i}\) has bad reduction only at finitely many primes of \(K\), the proof is complete. $\square$

\section{Proof of Theorem \ref{thm:main-intro}}\label{sec:proof-main-intro}
\subsection{Reduction to Finite Extensions} \label{subsec:finite-extension}
We note that it suffices to prove Theorem \ref{thm:main-intro} for any finite extension of \(K\). First of all, almost linear-disjointness of a system \(\big\{(\varphi_{i}, \mathcal{S}_{i})\big\}_{i=1}^{g}\) is preserved through finite extensions. Similarly, insolubility of \(\varphi_{i}^{m}(\gamma) \equiv \alpha (\operatorname{mod} \mathfrak{q})\) for a positive density of primes in a finite extension \(L\) of \(K\) implies its it insolubility modulo a positive density of primes in \(K\) too because given a positive density of primes \(\Sigma\) of an extension \(L\) of \(K\), the set \(\big\{ q \cap \mathfrak{o}_{K} : q \in \Sigma \big\}\) has positive density as subset of primes of \(K\). 

\subsection{Reduction to Fixed Points}
We can also assume that all the \(\varphi_{i}\) pre-periodic points are in fact \(\varphi_{i}\)-fixed-points. For instance, assume that \(\{\alpha_{i1}, \alpha_{i2}, \ldots, \alpha_{in_{i}} \}\subseteq\mathcal{A}_{i}\) are all the pre-periodic points under \(\varphi_{i}\) for any \(1 \leq i \leq g\). Then by definition, for every \(1 \leq j \leq n_{i}\), there exists $m_{j}\geq 0, n_{j}\geq 1$ such that \(\varphi_{i}^{m_{j}}(\alpha_{ij}) = \varphi_{i}^{m_{j}+n_{j}}(\alpha_{ij})\). In this case, we take \(a = \mathrm{max}\{m_{j}\}_{j=1}^{\nu}\) and \(b = \mathrm{lcm}\{n_{j}: 1 \leq j \leq \nu\}\). Now, we replace each \(\alpha\in\mathcal{A}_{i}\) by \(\varphi_{i}^{a}(\alpha)\) and enlarge the finite set \(\mathcal{T}_{i}\) by 
\[ \bigcup_{\gamma\in\mathcal{T}_{i}} \big\{ \varphi_{i}(\gamma), \varphi_{i}^{2}(\gamma), \ldots, \varphi_{i}^{b-1}(\gamma)\big\}.\] 

Then, we replace the map \(\varphi_{i}\) by \(\varphi_{i}^{b}\); notice that every \(\varphi_{i}\) pre-periodic point \(\alpha_{j}\) in \(\mathcal{A}_{i}\) is fixed by \(\varphi_{i}^{b}\). Assume that Theorem \ref{thm:main-intro} holds upon these modifications, then we note that at any prime \(\mathfrak{p}\) where every \(\varphi_{i}\) has good reduction, there exists \(m \geq M\) with \[ \varphi_{i}^{m}(\gamma) \equiv \alpha (\operatorname{mod} \mathfrak{p}) \text{ for the new data }\] implying that we have \[ \big( \varphi_{i}^{b} \big)^{m} \big( \gamma) \big)  \equiv \alpha (\operatorname{mod} \mathfrak{p}). \] But for new data, \(\gamma\in\mathcal{T}_{i}\) corresponds to some \(\varphi_{i}^{\ell}(\gamma^{\prime})\), where \(\gamma^{\prime}\) is from the original \(\mathcal{T}_{i}\) and \(1 \leq \ell \leq b-1\). Therefore, we have \[ \big( \varphi_{i}^{b} \big)^{m} \big( \varphi_{i}^{\ell}(\gamma^{\prime})) \big)  \equiv \alpha (\operatorname{mod} \mathfrak{p}). \] Similarly, \(\alpha\) in the new data is equal to \(\varphi_{i}^{a}(\alpha)\) in the old data which gives 
\[ \big( \varphi_{i}^{b} \big)^{m} \big( \varphi_{i}^{\ell}(\gamma^{\prime})) \big)  \equiv \varphi_{i}^{b}(\alpha) (\operatorname{mod} \mathfrak{p}) \] and hence \[ \varphi_{i}^{(b-1)m+ \ell}(\gamma^{\prime}) \equiv \alpha(\operatorname{mod} \mathfrak{p}) \] in the old data.

Upon these reductions, we will also need the following Proposition that appears in \cite{BGHKST} (see Proposition 1, pp. 642). 

\begin{proposition}\label{prop:same-ram-index}
Let \(K\) be a number field, \(\tilde{p}\) be a prime of \(\mathfrak{o}_{\overline{K}}\), \(\varphi: \mathbb{P}^{1} \longrightarrow \mathbb{P}^{1}\) be a rational function defined over \(K\) and of good reduction at \(\mathfrak{p} = \tilde{\mathfrak{p}} \cap \mathfrak{o}_{K}\) such that \(2 \leq \mathrm{deg}(\varphi) < \mathrm{char}(K_{\mathfrak{p}})\). Let \(\mathcal{A} = \{\alpha_{i}\}_{i=1}^{k}\) be a finite subset of \(\mathbb{P}^{1}(K)\) such that for every \(\alpha_{i}\in \mathcal{A}\) the following is satisfied:
\begin{itemize}
    \item If \(\alpha_{i}\) is not periodic, then \(\alpha_{i}\) is not periodic modulo \(\mathfrak{p}\). 

    \item If \(\alpha_{i}\) is periodic, then \(\alpha_{i}\) is a fixed point of \(\varphi\) and the ramification index of \(\varphi_{i}\) at \(\alpha_{i}\) over \(K\) is equal to the ramification index of \(\varphi_{i}\) at \(\alpha_{i}\) modulo \(\mathfrak{p}\). 
\end{itemize}
Then, there is a finite extension \(E\) of \(K\) such that for every finite extension \(L\) of \(E\), there exists a positive integer \(M\) such that for every \(m \geq M\) and all \(\beta\in\mathbb{P}^{1}(\overline{K})\) with \(\varphi^{m}(\beta)\in\mathcal{A}\) but \(\varphi^{t}(\beta)\not\in\mathcal{A}\) for every \(0 \leq t \leq m-1\) the following are satisfied:
\begin{itemize}
    \item The prime \(\mathfrak{r} := \tilde{\mathfrak{p}} \cap \mathfrak{o}_{L(\beta)}\) is not ramified over the prime \(\mathfrak{q} := \tilde{\mathfrak{p}} \cap \mathfrak{o}_{L}\). 

    \item The index \([\mathfrak{o}_{L(\beta)}/\mathfrak{r} : \mathfrak{o}_{L}/\mathfrak{q}] > 1\).
\end{itemize}
\end{proposition}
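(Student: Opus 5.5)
The plan is to reduce everything to Lemma~\ref{lem:nonper-modp}, applied to finitely many base points, each of which is not periodic modulo the prime lying under \(\tilde{\mathfrak{p}}\). We then take \(E\) to be the compositum of the finitely many fields that the lemma produces, and \(M\) to be the maximum of the corresponding thresholds (shifted by one where needed). Split \(\mathcal{A}=\mathcal{A}_{np}\sqcup\mathcal{A}_{fix}\) into the non-periodic points and the fixed points. Let \(\beta\) satisfy \(\varphi^{m}(\beta)\in\mathcal{A}\), with \(\varphi^{t}(\beta)\notin\mathcal{A}\) for \(t<m\).

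\emph{Case 1: \(\varphi^m(\beta)=\alpha_i\in\mathcal{A}_{np}\).} By hypothesis \(\alpha_i\) is not periodic modulo \(\mathfrak{p}\). Lemma~\ref{lem:nonper-modp} then gives a field \(E_i\) such that for every \(L\supseteq E_i\) there is an \(M_i\) for which both conclusions hold for all \(\beta\in\varphi^{-m}(\alpha_i)\) with \(m\ge M_i\). The first-hitting condition is not even needed in this case.

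\emph{Case 2: \(\varphi^m(\beta)=\alpha\in\mathcal{A}_{fix}\).} Since \(\varphi^{m-1}(\beta)\neq\alpha\), the point \(\gamma:=\varphi^{m-1}(\beta)\) lies in the finite set \(\Gamma_\alpha:=\varphi^{-1}(\alpha)\setminus\{\alpha\}\), and \(\varphi^{m-1}(\beta)=\gamma\). Each \(\gamma\in\Gamma_\alpha\) is strictly preperiodic, hence not periodic. The key point is to show that \(\gamma\) is also not periodic modulo \(\tilde{\mathfrak{p}}\).

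To do this, we use the ramification hypothesis. The multiplicity of \(\alpha\) as a root of \(\varphi(x)=\alpha\) equals its multiplicity modulo \(\mathfrak{p}\), and good reduction preserves the total count of \(\deg\varphi\) roots. So no root \(\gamma\neq\alpha\) can reduce to \(\alpha\) modulo \(\tilde{\mathfrak{p}}\); otherwise the multiplicity of \(\bar\alpha\) downstairs would exceed that of \(\alpha\) upstairs. Hence \(\bar\gamma\neq\bar\alpha\). Since \(\varphi_{\mathfrak{p}}(\bar\gamma)=\bar\alpha\) is a fixed point, the orbit of \(\bar\gamma\) never returns to \(\bar\gamma\). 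Thus \(\bar\gamma\) is not \(\varphi_{\mathfrak{p}}\)-periodic.

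Now set \(K':=K(\Gamma)\), where \(\Gamma=\bigcup_\alpha\Gamma_\alpha\), and let \(\mathfrak{p}'=\tilde{\mathfrak{p}}\cap\mathfrak{o}_{K'}\). The map \(\varphi\) still has good reduction at \(\mathfrak{p}'\), and \(\operatorname{char}k_{\mathfrak{p}'}=\operatorname{char}k_{\mathfrak{p}}>\deg\varphi\). We apply Lemma~\ref{lem:nonper-modp} over \(K'\) to each \(\gamma\in\Gamma\), obtaining \(E_\gamma\supseteq K'\) and, for each \(L\supseteq E_\gamma\), a threshold \(M_\gamma\). These give the two conclusions for \(\beta\in\varphi^{-(m-1)}(\gamma)\) whenever \(m-1\ge M_\gamma\).

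Finally, take \(E\) to be the compositum of all the \(E_i\) and all the \(E_\gamma\). For a given \(L\supseteq E\), take \(M=\max(\{M_i\}\cup\{M_\gamma+1\})\). Every admissible \(\beta\) falls into Case 1 or Case 2, so both conclusions hold.

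The main obstacle is the step in Case 2 showing that no nontrivial preimage of a fixed point collides with it modulo \(\tilde{\mathfrak{p}}\). This is exactly where the equality of ramification indices is used, and it needs care when \(\alpha=\infty\) or a preimage lies at \(\infty\). I would handle that by changing coordinates over \(\mathfrak{o}_{\mathfrak{p}}\) so that \(\bar\alpha\neq\bar\infty\), then comparing root multiplicities of \(F-\alpha G\) and its reduction.
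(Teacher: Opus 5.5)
Your proposal is correct. The paper gives no proof of its own here and simply imports the statement from \cite{BGHKST}, and your reduction is essentially the standard argument: the equality \(e_{\varphi}(\alpha)=e_{\varphi_{\mathfrak{p}}}(\bar\alpha)\) forces each point of \(\varphi^{-1}(\alpha)\setminus\{\alpha\}\) to reduce away from the fixed point \(\alpha\) modulo \(\tilde{\mathfrak{p}}\), hence to be non-periodic modulo \(\tilde{\mathfrak{p}}\). Applying Lemma \ref{lem:nonper-modp} over \(K(\Gamma)\) to these points and to the non-periodic elements of \(\mathcal{A}\), with \(E\) the compositum and \(M\) the shifted maximum, then gives both conclusions.

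The coordinate change you propose for \(\infty\) is harmless but unnecessary. Write \(\alpha=[a:b]\) with \(a,b\in\mathfrak{o}_{\mathfrak{p}}\) not both in \(\mathfrak{p}\), and compare root multiplicities of the binary form \(bF-aG\) with those of its reduction, which is nonzero of degree \(\deg\varphi\) by good reduction; this treats all points uniformly.
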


We note that there are only finitely many primes \(\mathfrak{p}\) of \(K\) such that any of the following are satisfied:
\begin{itemize}
    \item For some \(1 \leq i \leq g\), \(\varphi_{i}\) has bad reduction at \(\mathfrak{p}\). 

    \item The characteristic of \(K_{\mathfrak{p}}\)  is not larger than \(\mathrm{max}\{\mathrm{deg}(\varphi_{i}):1 \leq i \leq g\}\).

    \item There exists some \(1 \leq i \leq g\) and some point \(\alpha \in \mathcal{A}_{i}\) such that the ramification index of \(\varphi_{i}\) at \(\alpha_{i}\), modulo \(\mathfrak{p}\), is larger than the ramification index of \(\varphi_{i}\) at \(\alpha_{i}\) in \(K\). 
\end{itemize}
By reduction steps above, we have that for every \(1 \leq i \leq g\), every point in \(\mathcal{A}_{i}\setminus\mathcal{B}_{i}\) are fixed by \(\varphi_{i}\) and \(\mathcal{B}_{i}\) consists of all the points in \(\mathcal{A}_{i}\) that are not \(\varphi_{i}\)-preperiodic.  Then, Theorem \ref{thm:multimultiperiodic} implies that there is a positive density of primes \(\mathfrak{p}\) of \(K\) such that each of the sets \(\mathcal{A}_{i}\), \(1 \leq i \leq g\), satisfy the hypothesis in Proposition \ref{prop:same-ram-index} with the corresponding \(\varphi_{i}\). We choose one such prime \(\mathfrak{p}\) of \(K\) and a prime \(\tilde{p}\) of \(\mathfrak{o}_{\overline{K}}\) such that \(\mathfrak{p} = \tilde{\mathfrak{p}} \cap \mathfrak{o}_{K}\).

Now, applying Proposition \ref{prop:same-ram-index} for each of the \(\mathcal{A}_{i}\), \(\varphi_{i}\) and this same prime \(\tilde{\mathfrak{p}}\) gives that for every \(1 \leq i \leq g\), fields \(E_{i}\) that satisfy its conclusions for respective \(\varphi_{i}\). Define \(L := \prod_{i=1}^{k} E_{i}\). Then for every \(1 \leq i \leq g\) and all large enough \(m\) and all \(\beta\in\mathbb{P}^{1}(\overline{K})\) such that \(\varphi_{i}^{m}(\beta)\in\mathcal{A}_{i}\) but \(\varphi_{i}^{t}(\beta)\not\in\mathcal{A}_{i}\) for all \(0 \leq t \leq m-1\) satisfies that:
\begin{itemize}
    \item The prime \(\mathfrak{r} := \tilde{\mathfrak{p}} \cap \mathfrak{o}_{L(\beta)}\) is not ramified over the prime \(\mathfrak{q} := \tilde{\mathfrak{p}} \cap \mathfrak{o}_{L}\). 

    \item The index \([\mathfrak{o}_{L(\beta)}/\mathfrak{r} : \mathfrak{o}_{L}/\mathfrak{q}] > 1\).
\end{itemize}
By the same reduction argument as made at the start of this section, it suffices to prove the claim for \(L\). Let \(m\) be sufficiently large as guaranteed above and let \(M\) be the Galois extension of \(L\) obtained upon adjoining all the \(\beta\in\mathbb{P}^{1}(\overline{L})\) such that for some \(1 \leq i \leq g\), \(\varphi_{i}^{m}(\beta)\in\mathcal{A}_{i}\) but \(\varphi_{i}^{t}(\beta)\not\in\mathcal{A}_{i}\) for any \(0 \leq t < m\). Then, since \(\mathfrak{r}\) is unramified over \(\mathfrak{q}\), the extension \(M/L\) is unramified at \(\mathfrak{q}\) and since \([\mathfrak{o}_{L(\beta)}/\mathfrak{r} : \mathfrak{o}_{L}/\mathfrak{q}] > 1\) for every such \(\beta\), we have that \[ \prod_{i=1}^{k} \Big( \prod_{\alpha\in\mathcal{A}_{i}} \big(\varphi_{i}^{m}(x) - \alpha_{i} \big) \Big) \] does not have any root in \(\mathfrak{o}_{L}/\mathfrak{q}\). Therefore, the Galois group of \(M/L\) contains a conjugacy class \(\mathcal{C}\) worth of derangements. Then, the Chebotaev density theorem implies that the subset
\[ \mathcal{S} := \Big\{ \tau :  \Big[ \frac{M/L}{\tau} \Big] = \mathcal{C} \Big\} \]
of primes of \(L\) has positive density in \(L\) and for every \(\tau\), \[ \prod_{i=1}^{k} \Big( \prod_{\alpha\in\mathcal{A}_{i}} \big(\varphi_{i}^{m}(x) - \alpha_{i} \big) \Big) \] has no roots in \(\mathfrak{o}_{L}/\tau\). Now, we want to argue that the Theorem \ref{thm:main-intro} follows from the following claim that appears in \cite[pp. 644]{BGHKST}.

\textbf{Claim 1}: Let \(n \geq m\), \(z \in\mathbb{P}^{1}(L)\) \(\tau\in\mathcal{S}\) and \(1\leq i \leq g\). Assume that \(\varphi_{i}^{n}\) is congruent modulo \(\tau\) to some element of \(\mathcal{A}_{i}\). Then, \(\varphi_{i}^{t}\) modulo \(\tau\) is already congruent to some element of \(\mathcal{A}_{i}\) for some \(0\leq t \leq m-1\). 

Consider the set \(\mathcal{V}\) of primes \(\tau \in \mathcal{S}\) such that at least one of the following is satisfied;
\begin{enumerate}
    \item For some \(1 \leq i \leq g\), some \(\gamma\in\mathcal{T}_{i}\), some \(\alpha\in\mathcal{B}_{i}\) and some \(0 \leq t \leq n-1\), \(\varphi_{i}^{t}(\gamma) \equiv \alpha (\operatorname{mod} \tau)\). 

    \item For some \(1 \leq i \leq g\), some \(\alpha\in \mathcal{B}_{i}\) and \(\alpha^{\prime}\in\mathcal{A}_{i}\) and some \(1 \leq t \leq m\), \(\varphi_{i}^{t}(\alpha^{\prime}) \equiv \alpha (\operatorname{mod} \tau)\). 
\end{enumerate}
In \((1)\) above, since none of the \(\gamma\in\mathcal{T}_{i}\) are pre-periodic and each of the \(\alpha\) are fixed-points, we have that \(\varphi_{i}^{t}(\gamma) \neq \alpha_{ij}\) for any $t \geq 0$. Therefore, there are only finitely many primes in \(\mathcal{S}\) satisfying this condition. Again, since \(\alpha\) in the second condition are fixed points whereas \(\alpha^{\prime}\) are non preperiodic under \(\varphi_{i}\), the set of primes in \(\mathcal{S}\) that satisfy this condition is also finite. Therefore, the set \(\mathcal{V}\) is finite and the set \(\mathcal{U} := \mathcal{S}\setminus\mathcal{V}\) has positive density. We claim that the conclusions of Theorem \ref{thm:main-intro} hold for the set \(\mathcal{U}\) of primes, positive integer \(m\) and the field \(L\).

For the sake of contradiction, assume that there is an \(1 \leq i \leq g\), a \(\gamma\in\mathcal{T}_{i}\), an \(\alpha\in\mathcal{A}_{i}\), a \(\tau\in\mathcal{U}\) and a $n \geq m$ such that \(\varphi_{i}^{n}(\gamma) \equiv \alpha (\operatorname{mod} \tau)\). Then, by the Claim 1 above, there is a \(0 \leq t \leq m-1\) and \(\alpha^{\prime} \in\mathcal{A}_{i}\) such that \(\varphi_{i}^{t}(\gamma) \equiv \alpha^{\prime}(\operatorname{mod} \tau)\). Then, by the way we defined \(\mathcal{U}\), \(\alpha^{\prime}\) cannot be \(\varphi_{i}\)-periodic. Now, \(\varphi_{i}^{n}(\gamma) \equiv \alpha (\operatorname{mod} \tau)\) and \(\varphi_{i}^{t}(\gamma) \equiv \alpha^{\prime} (\operatorname{mod} \tau)\) together imply that \(\varphi_{1}^{n-t-1}\big(\varphi_{1}(\alpha^{\prime})\big) \equiv \alpha (\operatorname{mod} \tau)\). Since \(-t \geq -m + 1\), we have that \(m - t - 1 \geq 0\), which along with Claim 1, gives that there exists \(0 \leq k \leq m-1\) such that \(\varphi_{1}^{k+1}(\alpha^{\prime}) \equiv \alpha(\operatorname{mod} \tau)\) for some \(\alpha\in\mathcal{A}_{1}\), contradicting the item (2) above, used in defining \(\mathcal{V}\) and then \(\mathcal{U}\). $\square$

\section{Application to dynamical Mordell-Lang Problems}\label{sec:DML}

As an application of Theorem \ref{thm:main-intro}, we establish the Conjecture \ref{con:DMLC} over number fields, in some special cases as stated in Theorem \ref{thm:DML}. 

\subsection{Proof of Theorem \ref{thm:DML}}\label{proof:DML} 
If some $x_{i}$ is $\varphi_{i}$-preperiodic, we can absorb the first finitely many iterates that may lie on $V$ into trivial arithmetic progressions $\{nk + \ell\}_{n \geq 0}$ with $k = 0$. Thus, we may assume that $x_{i}$ is $\varphi_{i}$-periodic, of some period $j \geq 1$. By restricting our attention to progressions $\{nk + \ell\}_{n \geq 0}$ with $j \mid k$, it suffices to assume a \(\varphi_{i}\)-periodic $x_{i}$ is fixed by $\varphi_{i}$. So, we may therefore assume that no $x_{i}$ is $\varphi_{i}$-preperiodic. 

Since the set \(\big\{(\varphi_{i}, \mathcal{S}_{i})\big\}_{i=1}^{g}\) forms an almost linearly-disjoint system, where \(\mathcal{S}_{i}\) are the set of all the critical points of \(\varphi_{i}\), Theorem \ref{thm:main-intro} implies that there is a positive \(M\) and a set of primes \(\mathcal{P}\) of \(K\) of positive density such that for every \(1 \leq i \leq g\) and every \(\mathfrak{p}\in\mathcal{P}\), \(\varphi_{i}^{m}(x_{i})\) is not equivalent modulo \(\mathfrak{p}\) to any critical point of \(\varphi_{i}\), \(\mathrm{char}{K_{\mathfrak{p}}} > \mathrm{deg}(\varphi_{i})\) and \(\varphi_{i}\) has good reduction at \(\mathfrak{p}\). Fix any such
prime $\mathfrak{p}$, and note that the derivative of the reduction $\varphi_{i,\mathfrak{p}}$ is
nontrivial, because $\varphi_{i}$ has good reduction and
$1 \le \deg \varphi_{i} < \operatorname{char} k_{\mathfrak{p}}$. Thus, \(\varphi_{i}'\bigl(\varphi_{i}^{m}(x_{i})\bigr),\)
or its appropriate analogue if $\varphi_{i}^{m}(x_{i})$ lies in the residue class at
$\infty$, is a $p$-adic unit for all $m \ge M$. It follows that
$\varphi_{i}^{m}(x_{i}) \not\equiv \gamma \pmod{\mathfrak{p}}$ for any attracting periodic
point $\gamma$ of $\varphi_{i}$. Applying Proposition \ref{prop:orbitAP} yields the desired conclusion. \(\square\)

We will present a more special case of the dynamical Mordell-Lang conjecture in the next section that follows unconditionally. 

\section{Augmentation of Sets by Elements of Forward Orbits}\label{sec:augmentation}
In this section, we will show that as a consequence of methods and proofs in this article, it is relatively straightforward to enhance our result to the case when \((\{\varphi_{i}\}_{i=1}^{g}, \{\mathcal{S}_{i}\}_{i=1}^{g})\) forms an almost linearly-disjoint and we augment the sets \(\mathcal{S}_{i}\) by elements of \(\mathcal{O}^{+}(\alpha_{ij})\) for any \(\alpha_{ij}\in\mathcal{S}_{i}\). Note that upon this augmentation, the almost linear-disjointness of the system is destroyed. We first present an augmentation of Proposition \ref{prop:derangement-finite-set-system}.  

\begin{proposition}\label{prop:augmented-derangement-system}
    Let $K$ be a number field, \(g \in\mathbb{N}\) and \(\varphi_{1}, \ldots, \varphi_{g}: \mathbb{P}^{1}_{K} \longrightarrow \mathbb{P}^{1}_{K}\) be finitely many rational functions, each of degree at least $2$. For every \(1 \leq i \leq g\), let \(\mathcal{S}_{i} = \big\{\alpha_{ij} \big\}_{j=1}^{n_{i}}\subset\mathbb{P}^{1}(K)\) be a set consisting entirely of points that are not \(\varphi_{i}\)-periodic. Furthermore, suppose that the set \(\big\{(\varphi_{i}, \mathcal{S}_{i})\big\}_{i=1}^{g}\) forms an almost linearly-disjoint system. 
    
    Now, for every \(1 \leq i \leq g\) and every \(1 \leq j \leq n_{i}\), take finitely many natural numbers \(r_{ij}^{(1)}, \ldots, r_{ij}^{(m_{ij})}\) and define \(\mathcal{A}_{i} := \mathcal{S}_{i} \bigcup \big\{ \varphi_{i}^{r_{ij}^{(\nu)}}\big(\alpha_{ij}\big) : 1 \leq \nu \leq m_{ij}, 1 \leq j \leq n_{i} \big\}\) for every \(1 \leq i \leq g\). Then, there is a finite extension \(E\) of \(K\) such that for every extension \(L\) of \(E\), there exists arbitrarily large positive \(m\), and extensions $L := L^{(m)}$ such that \(\mathrm{Gal}\Big(L_{m}(\{\varphi_{i}\}_{i=1}^{g}, \{\mathcal{A}_{i}\}_{i=1}^{g}))/L^{(m)}\Big)\) contains a conjugacy class of derangement of \(\bigcup_{i=1}^{g}\big( \bigcup_{\alpha\in\mathcal{A}_{i}} \varphi_{i}^{-m}(\alpha)\big)\). 
\end{proposition}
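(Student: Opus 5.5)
The plan is to reduce the augmented statement to Proposition \ref{prop:derangement-finite-set-system} through one containment of fields. Fix $i$, $j$ and an augmented point $\beta=\varphi_i^{r}(\alpha_{ij})$ with $r\ge 1$. Every $\varphi_i$-preimage of $\beta$ of level $m$ is the image under $\varphi_i^{r}$ of a preimage of $\alpha_{ij}$ of level $m$:
\[\varphi_i^{-m}(\beta)=\varphi_i^{r}\big(\varphi_i^{-m}(\alpha_{ij})\big).\]
This holds because $\varphi_i^{r}\circ\varphi_i^{m}=\varphi_i^{m}\circ\varphi_i^{r}$ and $\varphi_i^{r}$ is surjective on $\mathbb{P}^1(\overline{K})$. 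Surjectivity gives $\supseteq$: for $y\in\varphi_i^{-m}(\beta)$, pick $x$ with $\varphi_i^{r}(x)=y$; then $\varphi_i^{m}(x)$ is a preimage of $\beta$ under $\varphi_i^{r}$, but it need not equal $\alpha_{ij}$.

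That last point is the main obstacle. I would handle it differently. Since $\varphi_i^{-(m+r)}(\beta)\supseteq\varphi_i^{-m}(\alpha_{ij})$, the clean statement is the containment
\[L_m(\varphi_i,\beta)\subseteq L_{m+r}(\varphi_i,\beta),\]
but this runs the wrong way for our purposes. So I will instead argue directly at the level of Frobenius. Take $m$ large and $L=L^{(m+R)}$, where $R$ is the largest augmentation exponent $r_{ij}^{(\nu)}$. Proposition \ref{prop:derangement-finite-set-system}, applied with level $m+R$, supplies a Frobenius class $\mathcal{C}$ of positive-density primes $\mathfrak{q}$ (unramified, of good reduction) at which no $\varphi_i^{m+R}(x)-\alpha_{ij}$ has a root modulo $\mathfrak{q}$. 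Equivalently, modulo $\mathfrak{q}$ no $\alpha_{ij}$ has a $k_\mathfrak{q}$-rational $(m+R)$-th preimage.

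The key step is then to upgrade this to the augmented points. I need that modulo $\mathfrak{q}$ no $\varphi_i^{r}(\alpha_{ij})$ has a rational $m$-th preimage. A rational $m$-th preimage $y$ of $\varphi_i^{r}(\alpha_{ij})$ does not by itself produce a rational deep preimage of $\alpha_{ij}$. So the plan is to exploit the structure of the proof of Lemma \ref{lem:nonper-modp} rather than its bare statement. At a fixed good prime $\mathfrak{p}$ where all $\alpha_{ij}$ are non-periodic, their forward orbits modulo $\mathfrak{p}$ are non-periodic as well. Hence every augmented point $\varphi_i^{r}(\alpha_{ij})$ is also non-periodic modulo $\mathfrak{p}$, because a point whose image is periodic need not be periodic, but a forward iterate of a strictly preperiodic point with tail length at least $r+1$ stays non-periodic. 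To secure this, I will replace $\mathfrak{p}$ by one for which each $\alpha_{ij}$ has tail length exceeding $R$ modulo $\mathfrak{p}$. Theorem \ref{thm:multimultiperiodic}, applied to the non-augmented system after first shifting $\alpha_{ij}$ to $\varphi_i^{R}(\alpha_{ij})$, provides such primes: for those points the non-periodicity is known once the shifted system is checked to be almost linearly-disjoint, or it can be deduced from Corollary \ref{cor:nonper-red} applied pointwise combined with the chosen $\mathfrak{p}$. Proposition \ref{prop:same-ram-index}, applied with $\mathcal{A}=\mathcal{A}_i$ and taking $\beta$ to be first-entry preimages, then gives unramified inertia degree greater than $1$ for all new-entry preimages at a single prime.

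With that in hand, the final step is as follows. The Frobenius at $\tilde{\mathfrak{p}}$ in $\mathrm{Gal}\big(L_m(\{\varphi_i\},\{\mathcal{A}_i\})/L\big)$ fixes no preimage of any point of $\mathcal{A}_i$. Preimages that pass through another point of $\mathcal{A}_i$ are handled by induction along the chain down to the first entry. This single Frobenius element is the desired derangement, and its conjugacy class consists of derangements. The almost linear disjointness of the original system is used only to obtain the common prime in Theorem \ref{thm:multimultiperiodic}.
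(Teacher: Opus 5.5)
\textbf{There is a genuine gap: the common prime your endgame needs is never produced.} Write $R_{ij}=\max_\nu r_{ij}^{(\nu)}$ and $R=R_{ij}$ for short. Everything after ``the key step'' rests on one prime $\mathfrak{p}$ of $K$ (good reduction, residue characteristic $>\deg\varphi_i$) at which every point of every $\mathcal{A}_i$ is non-periodic modulo $\mathfrak{p}$. Equivalently, each top iterate $\varphi_i^{R_{ij}}(\alpha_{ij})$ must be non-periodic modulo $\mathfrak{p}$. Given such a prime, Proposition \ref{prop:same-ram-index} at a single $\tilde{\mathfrak{p}}$ for every $i$ does yield a Frobenius derangement. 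Neither of your sources delivers it:
\begin{itemize}
\item Theorem \ref{thm:multimultiperiodic} for the original system only makes each $\alpha_{ij}$ non-periodic modulo $\mathfrak{p}$, i.e.\ tail length $\geq 1$, not $>R$.
\item Applying it to the shifted system needs $\{(\varphi_i,\{\varphi_i^{R_{ij}}(\alpha_{ij})\}_j)\}$ to be almost linearly-disjoint, which you leave unchecked. It does not follow from the hypothesis. Linear disjointness descends only to subfields, and $L_m(\varphi_i,\varphi_i^{R}(\alpha_{ij}))$ is in general not contained in any $L_{m'}(\varphi_i,\alpha_{ij})$: it is generated by images of preimages of all points of $\varphi_i^{-R}(\varphi_i^{R}(\alpha_{ij}))$, not just of $\alpha_{ij}$. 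This is exactly the obstruction you spotted yourself; asking for tail length $>R$ relocates it rather than resolving it.
\item The pointwise route fails outright. Corollary \ref{cor:nonper-red} gives one positive-density set of primes per point, and such sets need not intersect. A common prime is the whole content of Theorem \ref{thm:augmented-multimultiperiodic}, which the paper derives from this very proposition, so you cannot cite it.
\end{itemize}
Your closing sentence is therefore where the argument breaks: the original almost linear disjointness gives a common prime for the wrong points.

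\textbf{Comparison with the paper.} The paper does not pass to primes here. It applies Proposition \ref{prop:derangement-finite-set-system} directly to the top iterates $\mathcal{T}_i$. Its final step amounts to observing that $\varphi_i^{R-r}$ maps $\varphi_i^{-m}(\varphi_i^{r}(\alpha_{ij}))$ Galois-equivariantly into $\varphi_i^{-m}(\varphi_i^{R}(\alpha_{ij}))$. Hence every lift of a derangement of the top-level sets deranges all of $\bigcup_{\alpha\in\mathcal{A}_i}\varphi_i^{-m}(\alpha)$, which handles your chain argument without Proposition \ref{prop:same-ram-index}. So the paper needs the same input you are missing, namely almost linear disjointness of $\{(\varphi_i,\mathcal{T}_i)\}$, and it asserts this via \eqref{eq:field-containment}. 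That containment puts the $\mathcal{S}_i$-fields inside the $\mathcal{T}_i$-fields, which is the direction you yourself flagged as useless, so quoting it will not close your gap.

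\textbf{A second issue: preperiodic points.} Your plan tacitly requires the $\alpha_{ij}$ to be non-preperiodic, which the statement omits. Tail lengths can only drop under reduction, so if some $\alpha_{ij}$ is strictly preperiodic with tail length at most $R_{ij}$, no prime of your kind exists. In that case the proposition itself fails: take $\varphi(x)=x^2$, $\mathcal{S}_1=\{-1\}$, $r=1$; then $1\in\varphi^{-m}(1)$ is fixed by all of Galois. Non-preperiodicity is also what makes every first-entry time at least $M$ once $m$ is large, which your final step silently uses.
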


\begin{proof}
We will show that the result follows straightforwardly from Proposition \ref{prop:derangement-finite-set-system}. For every \(1 \leq i \leq g\) and \(1 \leq j \leq n_{i}\), we choose the largest \(0 \leq \nu_{ij} \leq m_{ij}\) such that \(\varphi_{i}^{r_{ij}^{(\nu_{ij})}}(\alpha_{ij}) \in\mathcal{A}_{i}\). Then, for every \(1 \leq i \leq g\), define \(\mathcal{T}_{i}\) to be the set that contains each of these unique \(\varphi_{i}^{r_{ij}^{(\nu_{ij})}}(\alpha_{ij})\) as \(1 \leq j \leq n_{i}\).  Now, we note the elementary fact that for every \(1 \leq i \leq g\), \(1 \leq j \leq n_{i}\) and \(1 \leq \nu \leq m_{ij}\), every root of \(\varphi^{m}(x) - \alpha_{ij}\) is also a root of \(\varphi^{m+r_{ij}^{(\nu)}}(x) - \varphi_{i}^{r_{ij}^{(\nu)}}(\alpha_{ij})\)  and hence for every field \(L\), we have 
\begin{equation}\label{eq:field-containment}
L_{m}(\varphi_{i}, \mathcal{A}_{i}) \subseteq L_{m+r_{ij}^{(\nu_{ij})}}(\varphi_{i}, \mathcal{T}_{i}).
\end{equation}
Since \(\big\{(\varphi_{i}, \mathcal{S}_{i})\big\}_{i=1}^{g}\) forms an almost linearly-disjoint system, we also have that \(\big\{(\varphi_{i}, \mathcal{T}_{i})\big\}_{i=1}^{g}\) forms an almost linearly-disjoint system by \eqref{eq:field-containment}. Now, we apply Proposition \ref{prop:derangement-finite-set-system} for the maps \(\varphi_{i}\) and sets \(\mathcal{T}_{i}\) gives the required conclusion for maps \(\{\varphi_{i}\}_{i=1}^{g}\) and the sets \(\{\mathcal{T}_{i}\}_{i=1}^{g}\). In particular, for every finite extension \(E\) of \(K\), there exists arbitrarily large \(m\) and corresponding extensions \(L := L^{(m)}\) such that \(\mathrm{Gal}\Big(L_{m}(\{\varphi_{i}\}_{i=1}^{g}, \{\mathcal{T}_{i}\}_{i=1}^{g}))/L^{(m)}\Big)\) contains a conjugacy class of derangement of \[\bigcup_{i=1}^{g}\big( \bigcup_{\alpha\in\mathcal{T}_{i}} \varphi_{i}^{-m}(\alpha)\big) = \bigcup_{i=1}^{g} \bigg( \bigcup_{j=1}^{n_{i}}  \varphi_{i}^{-m}\big( \varphi^{r_{ij}^{(\nu_{ij})}}(\alpha_{ij})\big) \bigg).\] Now we have the result since every derangement of the above set is also a derangement of \(\bigcup_{i=1}^{g}\big( \bigcup_{\alpha\in\mathcal{A}_{i}} \varphi_{i}^{-m}(\alpha)\big)\), since every root of \(\varphi^{m}(x) - \alpha_{ij}\) is also a root of \(\varphi^{m+r}(x) - \varphi_{i}^{r}(\alpha_{ij})\) for every \(m, r\) and \(i, j\). 
\end{proof}

Then, Theorem \ref{thm:multimultiperiodic} and Theorem \ref{thm:main-intro} both admit a similar enhancement. Let us first state the same for Theorem \ref{thm:multimultiperiodic}. 

\begin{theorem}\label{thm:augmented-multimultiperiodic}
    Let $K$ be a number field, \(g \in\mathbb{N}\) and \(\varphi_{1}, \ldots, \varphi_{g}: \mathbb{P}^{1}_{K} \longrightarrow \mathbb{P}^{1}_{K}\) be finitely many rational functions, each of degree at least $2$. For every \(1 \leq i \leq g\), let \(\mathcal{S}_{i} = \big\{\alpha_{ij} \big\}_{j=1}^{n_{i}}\subset\mathbb{P}^{1}(K)\) be a set consisting entirely of points that are not \(\varphi_{i}\)-periodic. Furthermore, suppose that the set \(\big\{(\varphi_{i}, \mathcal{S}_{i})\big\}_{i=1}^{g}\) forms an almost linearly-disjoint system. 

    Now, for every \(1 \leq i \leq g\) and every \(1 \leq j \leq n_{i}\), take finitely many natural numbers \(r_{ij}^{(1)}, \ldots, r_{ij}^{(m_{ij})}\) and define \(\mathcal{A}_{i} := \mathcal{S}_{i} \bigcup \big\{ \varphi_{i}^{r_{ij}^{(\nu)}}\big(\alpha_{ij}\big) : 1 \leq \nu \leq m_{ij}, 1 \leq j \leq n_{i} \big\}\) for every \(1 \leq i \leq g\). Then, there exists a positive density subset of primes $\mathfrak{p}$ of $K$ at which each of the \(\varphi_{i}\) has good reduction such that for every \(1 \leq i \leq g\), every \(\alpha\in\mathcal{A}_{i}\), \(\alpha\) is not \(\varphi_{i}\)-periodic modulo \(\mathfrak{p}\).
\end{theorem}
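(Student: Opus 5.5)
The plan is to run the proof of Theorem \ref{thm:multimultiperiodic} again, using Proposition \ref{prop:augmented-derangement-system} in place of Proposition \ref{prop:derangement-finite-set-system}. There is also a more elementary route, which I would use as a cross-check: if $\alpha$ is not periodic modulo $\mathfrak{p}$ and $\varphi_{i}$ has good reduction at $\mathfrak{p}$, then no point $\varphi_{i}^{r}(\alpha)$ can be periodic modulo $\mathfrak{p}$ either. The reason is that the eventual cycle of $\alpha \bmod \mathfrak{p}$ under $(\varphi_{i})_{\mathfrak{p}}$ contains $\varphi_{i}^{r}(\alpha) \bmod \mathfrak{p}$ only for $r$ large. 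So this "easy" reduction fails, and we genuinely need the derangement statement for the augmented sets. I will therefore follow the Galois route.

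\textbf{Step 1.} Apply Proposition \ref{prop:augmented-derangement-system} to the data $\{(\varphi_{i},\mathcal{S}_{i})\}$ and the integers $r_{ij}^{(\nu)}$. This produces a finite extension $E$ of $K$. Fix $L=E$. The proposition then gives arbitrarily large $m$, a finite extension $L^{(m)}$, and a conjugacy class $\mathcal{C}$ in
\[
G=\mathrm{Gal}\big(L^{(m)}_{m}(\{\varphi_{i}\},\{\mathcal{A}_{i}\})/L^{(m)}\big)
\]
consisting of derangements of $\bigcup_{i}\bigcup_{\alpha\in\mathcal{A}_{i}}\varphi_{i}^{-m}(\alpha)$.

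\textbf{Step 2.} By Theorem \ref{thm:chebotarev}, the primes $\mathfrak{q}$ of $L^{(m)}$ with Frobenius class $\mathcal{C}$ form a set of positive density. From this set, discard the finitely many primes where one of the following holds:
\begin{itemize}
\item some $\varphi_{i}$ has bad reduction;
\item the polynomials $\varphi_{i}^{m}(x)-\alpha$, $\alpha\in\mathcal{A}_{i}$, have bad reduction;
\item the extension is ramified.
\end{itemize}
For each remaining $\mathfrak{q}$, the Frobenius fixes no root of any $\varphi_{i}^{m}(x)-\alpha$. Hence no such polynomial has a root in the residue field $\mathfrak{o}_{L^{(m)}}/\mathfrak{q}$. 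This uses the standard Dedekind--Kummer correspondence between fixed roots and residue-field roots at unramified primes of good reduction. Consequently, for each $\alpha\in\mathcal{A}_{i}$, the point $\alpha \bmod \mathfrak{q}$ is not in the image of $(\varphi_{i})_{\mathfrak{q}}^{m}$.

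\textbf{Step 3.} Non-periodicity follows from Step 2. If $\alpha$ were $(\varphi_{i})_{\mathfrak{q}}$-periodic of period $n$, then $\alpha\equiv\varphi_{i}^{nm}(\alpha)=\varphi_{i}^{m}\big(\varphi_{i}^{(n-1)m}(\alpha)\big)$ would exhibit $\alpha$ as lying in the image of $(\varphi_{i})^{m}_{\mathfrak{q}}$, a contradiction. Finally, restrict to primes $\mathfrak{p}=\mathfrak{q}\cap\mathfrak{o}_{K}$. As observed in Subsection \ref{subsec:finite-extension}, this set still has positive density in $K$.

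There is one remaining issue: the argument gives non-periodicity modulo $\mathfrak{q}$, and we need it modulo $\mathfrak{p}$. To handle this, I would restrict to degree-one primes $\mathfrak{q}$, i.e.\ those with $k_{\mathfrak{q}}=k_{\mathfrak{p}}$. This is harmless, because degree-one primes carry full density, so the density of the Chebotarev set is unaffected.

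The main obstacle is Step 1, specifically making sure Proposition \ref{prop:augmented-derangement-system} is actually available in the needed form. Its proof replaces each $\mathcal{S}_{i}$ by the set $\mathcal{T}_{i}$ of "topmost" iterates and claims that almost linear disjointness transfers. If that transfer looks doubtful, I would instead argue directly, and would try this first. The fields $L_{m}(\varphi_{i},\varphi_{i}^{r}(\alpha_{ij}))$ contain $L_{m-r}(\varphi_{i},\alpha_{ij})$. Using Proposition \ref{prop:derangement-finite-set-system} at level $m'$, a Frobenius deranging $\varphi_{i}^{-m'}(\alpha_{ij})$ shows that $\alpha_{ij}$ has no $(m')$-th preimage modulo $\mathfrak{q}$. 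It then suffices to note that if $\varphi_{i}^{r}(\alpha_{ij})$ were periodic modulo $\mathfrak{q}$, then so would be every point that maps into its cycle in the way needed, which would force $\alpha_{ij}$ into the image of $(\varphi_{i})^{m'}_{\mathfrak{q}}$. The delicate point is this last implication, since periodicity of a forward iterate does not by itself force $\alpha_{ij}$ to be periodic. For that reason I would rely on the derangement statement for the augmented set as given in Proposition \ref{prop:augmented-derangement-system}.
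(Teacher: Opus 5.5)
You are following the paper's own route. Its proof of this theorem is the single instruction to rerun the proof of Theorem~\ref{thm:multimultiperiodic} with Proposition~\ref{prop:augmented-derangement-system} in place of Proposition~\ref{prop:derangement-finite-set-system}, and your Steps~2--3 are a sound, more careful version of that rerun. The gap is Step~1. You located the suspicious step in the proof of Proposition~\ref{prop:augmented-derangement-system} and then relied on it anyway. That proposition fails, and so does the theorem as stated, as soon as some $\alpha_{ij}$ is strictly preperiodic, which the hypotheses allow. Take $K=\mathbb{Q}$, $g=1$, $\varphi(x)=x^{2}-1$ and $\mathcal{S}_{1}=\{1\}$. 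Then $1\mapsto 0\mapsto -1\mapsto 0$, so $1$ is not periodic. Since $1$ is not exceptional, its backward orbit is infinite and of bounded height, so by Northcott $E(\varphi^{-m}(1))\neq E$ for every finite $E/\mathbb{Q}$ and all large $m$. Taking $L=E$, this shows $\{(\varphi,\{1\})\}$ is almost linearly disjoint, because conditions (2) and (3) are vacuous for one point and one map. With $r=1$ we get $\mathcal{A}_{1}=\{1,0\}$, and $0\mapsto -1\mapsto 0$ holds modulo every prime, so no prime satisfies the conclusion. Correspondingly, $\varphi^{-m}(0)$ always contains the rational point $0$ or $-1$, so the derangements promised by Proposition~\ref{prop:augmented-derangement-system} never exist.

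That proposition's proof has two defects. First, the set $\mathcal{T}_{i}$ of topmost iterates need not consist of non-periodic points, which Proposition~\ref{prop:derangement-finite-set-system} requires; this is what breaks above. Second, even when every $\alpha_{ij}$ is wandering, \eqref{eq:field-containment} points the wrong way. The fields attached to $\mathcal{T}_{i}$ are the larger ones: $L_{m+r}(\varphi_{i},\varphi_{i}^{r}(\alpha_{ij}))$ is the compositum of the fields $L(\varphi_{i}^{-m}(\alpha'))$ over all $\alpha'\in\varphi_{i}^{-r}(\varphi_{i}^{r}(\alpha_{ij}))$, not only $\alpha'=\alpha_{ij}$. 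Linear disjointness passes to subfields, not to overfields, so almost linear disjointness of $\{(\varphi_{i},\mathcal{T}_{i})\}$ does not follow from the hypotheses.

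The elementary reduction that does work is the converse of the one you tested in your cross-check. If $\varphi_{i}^{r}(\alpha)$ is not periodic modulo $\mathfrak{p}$, then neither is $\varphi_{i}^{r'}(\alpha)$ for any $0\le r'\le r$, because the image of a periodic point is periodic. Hence the conclusion for $\mathcal{A}_{i}$ is equivalent to the conclusion for $\mathcal{T}_{i}$. It then follows directly from Theorem~\ref{thm:multimultiperiodic} applied to the $\mathcal{T}_{i}$, with no augmented derangement statement at all, once one adds two hypotheses: each $\mathcal{T}_{i}$ consists of non-periodic points, and $\{(\varphi_{i},\mathcal{T}_{i})\}$ is almost linearly disjoint. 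From the stated hypotheses alone, neither your argument nor the paper's closes the gap.

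Separately, your worry about $\mathfrak{q}$ versus $\mathfrak{p}$ is not an issue for periodicity, since the orbit of $\alpha\in\mathbb{P}^{1}(K)$ modulo $\mathfrak{q}$ already lies in $\mathbb{P}^{1}(k_{\mathfrak{p}})$. What the restriction to degree-one primes really buys is the transfer of positive density down to $K$.
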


\begin{proof}
    The proof is exactly the same as the proof of Theorem \ref{thm:multimultiperiodic} that appears in Section \ref{sec:Proofs-Main-Results}, except that one needs to employ Proposition \ref{prop:augmented-derangement-system} where Proposition \ref{prop:derangement-finite-set-system} was used.
\end{proof}
\begin{remark}\label{rem:exceptional-point}
    One can easily see the above density result also holds if we replace the sets \(\mathcal{A}_{i}\) in Theorem \ref{thm:augmented-multimultiperiodic} by \((\mathcal{A}_{i}\cup E_{i})\), where \(E_{i}\) is the non-periodic exceptional points of \(\varphi_{i}\). This is because a non-periodic exceptional point \(\alpha\) is pre-periodic, and hence \(\{\varphi^{m}(\alpha) - \alpha\}_{m\geq 1}\) is a finite set that is divisible by only finitely many primes of \(K\).
\end{remark}
Now, we state the analogous augmentation for Theorem \ref{thm:main-intro}, whose proof is also exactly the same as in Section \ref{sec:proof-main-intro}, except that Theorem \ref{thm:augmented-multimultiperiodic} in place of Theorem \ref{thm:multimultiperiodic}. 

\begin{theorem}\label{thm:augmented-main-intro}
    Let \(K\) be a number field, let \(\varphi_{1},\ldots,\varphi_{g}:\mathbb{P}_{K}^{1}\longrightarrow\mathbb{P}_{K}^ {1}\) be rational maps of degree at least 2, and let \({\mathcal{A}}_{1},\ldots,{\mathcal{A}}_{g}\) be finite subsets of \(\mathbb{P}^{1}(K)\). Let \(\mathcal{B}_{i} \subseteq \mathcal{A}_{i}\) be the subset consisting of all the points in \(\mathcal{A}_{i}\) that are not \(\varphi_{i}\)-preperiodic, if there are any. Furthermore, suppose that the set \(\big\{(\varphi_{i}, \mathcal{B}_{i})\big\}_{i=1}^{g}\) forms an almost linearly-disjoint system. 

    Now, for every \(1 \leq i \leq g\) and every \(\alpha\in\mathcal{B}_{i}\), choose finitely many positive integers \(r_{1}, r_{2}, \ldots, r_{\nu}\) (which can vary for different \(i\) and \(\alpha\)) and add all the elements \(\varphi^{r_{1}}(\alpha), \varphi^{r_{2}}(\alpha), \ldots, \varphi^{r_{\nu}}(\alpha)\) to \(\mathcal{A}_{i}\). 
    Let \({\mathcal{T}}_{1},\ldots,{\mathcal{T}}_{g}\) be finite subsets of \(\mathbb{P}^{1}(K)\) such that no \({\mathcal{T}}_{i}\) contains any \(\varphi_{i}\)-preperiodic points. Then, there is a positive integer \(M\) and a set of primes \({\mathcal{P}}\) of \(K\) with positive density such that for any \(i=1,\ldots,g\), any \(\gamma\in{\mathcal{T}}_{i}\), any \(\alpha\in{\mathcal{A}}_{i}\), any \(\mathfrak{p}\in{\mathcal{P}}\), and any \(m\geq M\), \[\varphi_{i}^{m}(\gamma)\not\equiv\alpha\pmod{\mathfrak{p}}.\]
\end{theorem}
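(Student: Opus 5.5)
The plan is to rerun the argument of Section~\ref{sec:proof-main-intro}, substituting Theorem~\ref{thm:augmented-multimultiperiodic} for Theorem~\ref{thm:multimultiperiodic}. Write $\mathcal{A}_i'$ for the augmented set and $\mathcal{B}_i'$ for its non-preperiodic part. Since forward iterates of a non-preperiodic point are again non-preperiodic, $\mathcal{B}_i'$ is exactly $\mathcal{B}_i$ together with the added points $\varphi_i^{r}(\alpha)$, $\alpha\in\mathcal{B}_i$. This is precisely the shape of data that Theorem~\ref{thm:augmented-multimultiperiodic} handles.

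First, I perform the two reductions from Section~\ref{sec:proof-main-intro}.
\begin{enumerate}
\item Pass to a finite extension of $K$. This is harmless by Remark~\ref{rem:linearly-disjoint-1} and by the density argument in Subsection~\ref{subsec:finite-extension}.
\item Replace each $\varphi_i$ by $\varphi_i^{b}$, each $\alpha\in\mathcal{A}_i'$ by $\varphi_i^{a}(\alpha)$, and enlarge $\mathcal{T}_i$ by its first $b-1$ iterates, so that all preperiodic points of $\mathcal{A}_i'$ become fixed points.
\end{enumerate}
The key check is that the augmented structure survives step (2). The non-preperiodic part of the new data is $\{\varphi_i^{a}(\beta):\beta\in\mathcal{B}_i'\}$. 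Every element of this set is a forward $\varphi_i$-iterate of some $\alpha\in\mathcal{B}_i$. Whether it is also of the form $(\varphi_i^b)^r(\alpha')$ for a base set $\{\alpha'\}$ that is almost $\varphi_i^b$-linearly disjoint requires care. I would handle this by choosing as the new base set $\{\varphi_i^{a+s}(\alpha): \alpha\in\mathcal{B}_i,\ 0\le s<b\}$, or more simply by arguing directly at the level of fields. Since $L_m(\varphi_i^b,\varphi_i^{c}(\alpha))\subseteq L_{bm+c}(\varphi_i,\varphi_i^{c}(\alpha))$ and the roots of $\varphi_i^{bm}(x)-\varphi_i^c(\alpha)$ include those of $\varphi_i^{bm-c}(x)-\alpha$, the derangement construction of Proposition~\ref{prop:augmented-derangement-system} applies verbatim to the iterate. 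Concretely, I would pick for each $\alpha\in\mathcal{B}_i$ its largest augmented iterate, and use the containment \eqref{eq:field-containment}.

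Next, I apply Theorem~\ref{thm:augmented-multimultiperiodic}. It gives a positive-density set of primes $\mathfrak{p}$ with the following properties: each $\varphi_i$ has good reduction at $\mathfrak{p}$, $\operatorname{char}k_\mathfrak{p}>\deg\varphi_i$, the ramification indices at the fixed points are preserved modulo $\mathfrak{p}$ (only finitely many primes are excluded), and no element of $\mathcal{B}_i'$ is periodic modulo $\mathfrak{p}$. So $\mathcal{A}_i'$ satisfies the hypotheses of Proposition~\ref{prop:same-ram-index}. Fix such a $\tilde{\mathfrak{p}}$ and obtain fields $E_i$ from Proposition~\ref{prop:same-ram-index}, and let $L$ be their compositum. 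For large $m$, let $M$ be the Galois extension of $L$ generated by the ``first-entry'' preimages $\beta$ with $\varphi_i^m(\beta)\in\mathcal{A}_i'$. The Frobenius at $\mathfrak{q}=\tilde{\mathfrak{p}}\cap\mathfrak{o}_L$ is then a derangement on these $\beta$, and the Chebotarev density theorem (Theorem~\ref{thm:chebotarev}) gives a positive-density set $\mathcal{S}$ of primes $\tau$ of $L$ with the same property.

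Finally, I conclude using Claim~1 and the finite exceptional set $\mathcal{V}$ defined in Section~\ref{sec:proof-main-intro}, now with $\mathcal{B}_i'$ in place of $\mathcal{B}_i$. The set $\mathcal{V}$ must exclude the primes at which some $\varphi_i^t(\gamma)$, or some $\varphi_i^t(\alpha')$ with $\alpha'\in\mathcal{A}_i'$, is congruent to a fixed point of $\mathcal{A}_i'$ for small $t$. These congruences are finitely many nonzero differences, because non-preperiodic points never land on fixed points, so $\mathcal{V}$ is finite. The contradiction argument then runs unchanged.

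I expect the main obstacle to be the middle step: checking that the almost linear-disjointness hypothesis, which is stated only for $\mathcal{B}_i$ and for $\varphi_i$, transfers to the rescaled data after the fixed-point reduction (iterate $\varphi_i^b$, shift by $a$) and still feeds Theorem~\ref{thm:augmented-multimultiperiodic} and Proposition~\ref{prop:same-ram-index}. I would first try to bypass the rescaling for the non-preperiodic part. Non-periodicity modulo $\mathfrak{p}$ under $\varphi_i$ implies non-periodicity under $\varphi_i^b$, and non-periodicity of $\alpha$ modulo $\mathfrak{p}$ implies non-periodicity of its iterates modulo $\mathfrak{p}$ up to finitely many primes. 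So I would obtain the primes from Theorem~\ref{thm:augmented-multimultiperiodic} for the original data, and only afterwards apply the rescaling.
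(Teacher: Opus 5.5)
Your outline is the paper's own: rerun Section~\ref{sec:proof-main-intro} with Theorem~\ref{thm:augmented-multimultiperiodic} in place of Theorem~\ref{thm:multimultiperiodic}. However, the last step does not ``run unchanged'', and this is exactly where augmentation matters.

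With your $\mathcal{V}$, which excludes only congruences to fixed points, the chase goes as follows. Claim~1 gives $\varphi_i^t(\gamma)\equiv\alpha'\pmod{\tau}$ with $t\le m-1$ and $\alpha'$ non-preperiodic. Then either $n-t\le m$, or Claim~1 applied to $\varphi_i(\alpha')$ gives $\varphi_i^{s}(\alpha')\equiv\alpha''\pmod{\tau}$ with $1\le s\le m$ and $\alpha''\in\mathcal{A}_i'$. Nothing in your $\mathcal{V}$ forbids this when $\alpha''$ is non-preperiodic, so no contradiction arises. You also cannot just enlarge $\mathcal{V}$ to include such congruences. Augmentation produces exact identities among points of $\mathcal{B}_i'$ (e.g.\ $\alpha''=\varphi_i^{s}(\alpha')$ when $b=1$), and these hold modulo every prime. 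For the contradiction in Section~\ref{sec:proof-main-intro} to close, item (2) of $\mathcal{V}$ must cover all targets in $\mathcal{A}_i$. That is harmless only when no point of $\mathcal{B}_i$ is a forward iterate of another, and the paper's one-line proof does not address this either.

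The repair is as follows.
\begin{itemize}
\item Let $\mathcal{V}$ be the finitely many $\tau$ at which some congruence $\varphi_i^t(\gamma)\equiv\alpha$ ($0\le t\le m-1$) or $\varphi_i^s(\alpha')\equiv\alpha$ ($1\le s\le m$, $\alpha'\in\mathcal{B}_i'$), with $\alpha\in\mathcal{A}_i'$, holds although the corresponding equality fails in $\mathbb{P}^1(L)$.
\item Then every step of the chase is an exact equality. It produces strictly increasing exact hitting times $T$ with $\varphi_i^{T}(\gamma)\in\mathcal{A}_i'$, and it can only stop with $\varphi_i^{n}(\gamma)=\alpha$ exactly.
\item Since $\gamma$ is not preperiodic, these times form a finite set. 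Choosing $m$ (and $M=m$) beyond all of them gives the contradiction.
\end{itemize}

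Separately, your proposed bypass for the rescaling rests on a false implication. Non-periodicity of $\alpha$ modulo $\mathfrak{p}$ does not imply non-periodicity of $\varphi_i^{a}(\alpha)$ modulo $\mathfrak{p}$, even outside a finite set; only the converse holds. For $\varphi(x)=x^2$ and $\alpha=2$, the point $2$ is not periodic modulo $p=3,11,19$ while $4$ is, because $\operatorname{ord}_p(2)\equiv 2\pmod 4$.

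The obstacle you were worried about is not real, though. Theorem~\ref{thm:augmented-multimultiperiodic} allows any finite set of forward iterates, so you can put the shifted points $\varphi_i^{a}(\beta)$, $\beta\in\mathcal{B}_i'$, into the augmentation. Alternatively, do not shift the non-preperiodic points at all, since only the preperiodic ones need $\varphi_i^{a}$. Non-periodicity modulo $\mathfrak{p}$ under $\varphi_i$ implies it under $\varphi_i^{b}$. That, together with finitely many excluded primes, is all Proposition~\ref{prop:same-ram-index} needs, so no transfer of almost linear-disjointness to $\varphi_i^{b}$ is required.
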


Finally, such an augmentation can also be done on critical points of functions, in the context of the dynamical Mordell-Lang conjecture. 

\begin{theorem}\label{thm:augmented-DML}
Let $K$ be a number field, \(g \in\mathbb{N}\), \(V \subset (\mathbb{P}^{1})^{g}\) be a subvariety defined over \(K\), let \(x = (x_{1}, \ldots, x_{g})\in(\mathbb{P}^{1})^{g}(K)\). Moreover, let \(\Phi := (\varphi_{1}, \ldots, \varphi_{g})\) act on \((\mathbb{P}^{1})^{g}\) coordinatewise, where each \(\varphi_{i}\in K(t)\) is a rational function of degree \(\geq 2\). Let \(\mathcal{T}_{i}\) be the set of critical points of \(\varphi_{i}\) and assume that for every \(1 \leq i \leq g\), there exists a subset \(\mathcal{S}_{i}\subseteq\mathcal{T}_{i}\) such that the set \(\big\{(\varphi_{i}, \mathcal{S}_{i})\big\}_{i=1}^{g}\) forms an almost linearly-disjoint system and every element in \(\mathcal{T}_{i}\setminus\mathcal{S}_{i}\) lies in the forward orbit of some element in \(\mathcal{S}_{i}\). Then, the set of integers $n \in \mathbb{N}$ such that $\Phi^{n}(x) \in V(\overline{K})$ is a union of finitely many arithmetic progressions $\{nk + \ell\}_{n \in \mathbb{N}}$, where $k,\ell \ge 0$ are nonnegative integers.  
\end{theorem}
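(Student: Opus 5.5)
The plan is to repeat the proof of Theorem \ref{thm:DML} given in Subsection \ref{proof:DML}, with Theorem \ref{thm:augmented-main-intro} used in place of Theorem \ref{thm:main-intro}.

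\emph{Preliminary reductions.} These are carried out exactly as in Subsection \ref{proof:DML}. If some $x_i$ is $\varphi_i$-preperiodic, first discard finitely many initial iterates; these contribute progressions with $k=0$. Then restrict to progressions whose common difference is a multiple of the period, which lets us assume such $x_i$ are fixed points. Coordinates that are fixed play no role in the residue-class condition of Proposition \ref{prop:orbitAP}, since we only need to control orbits of non-preperiodic $x_i$. So we may assume that no $x_i$ is $\varphi_i$-preperiodic.

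\emph{Setting up Theorem \ref{thm:augmented-main-intro}.} Let $\mathcal{A}_i:=\mathcal{T}_i$ be the full set of critical points, after passing to a finite extension of $K$ over which all critical points are rational (allowed by Subsection \ref{subsec:finite-extension}). Let $\mathcal{B}_i$ be the non-preperiodic critical points. By hypothesis, every element of $\mathcal{T}_i\setminus\mathcal{S}_i$ has the form $\varphi_i^{r}(\alpha)$ with $\alpha\in\mathcal{S}_i$. Hence $\mathcal{T}_i$ is obtained from $\mathcal{S}_i$ by the augmentation allowed in Theorem \ref{thm:augmented-main-intro}, together with possibly some preperiodic critical points.

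The step needing care is matching the hypotheses of that theorem. The base set of non-preperiodic points must form an almost linearly-disjoint system, and the preperiodic elements are handled separately by the reduction-to-fixed-points argument in Section \ref{sec:proof-main-intro}. So I would take the base system to be the non-preperiodic elements of $\mathcal{S}_i$. A subset of an almost linearly-disjoint system is again almost linearly-disjoint, since linear disjointness passes to subfamilies and the proper-containment condition (1) is inherited. Preperiodic elements of $\mathcal{S}_i$, together with their forward images, are absorbed as in the proof of Theorem \ref{thm:main-intro}. A forward image of a non-preperiodic point is itself non-preperiodic and is covered by the augmentation.

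Applying Theorem \ref{thm:augmented-main-intro} with $\mathcal{T}_i:=\{x_i\}$ then gives $M$ and a positive-density set $\mathcal{P}$ of primes with the following property: for $\mathfrak{p}\in\mathcal{P}$ and $m\ge M$, the point $\varphi_i^m(x_i)$ is not congruent to any critical point of $\varphi_i$. Removing finitely many primes, we may also assume good reduction and $\operatorname{char}k_\mathfrak{p}>\deg\varphi_i$.

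\emph{Concluding.} Fix $\mathfrak{p}\in\mathcal{P}$. The reduced map has nonvanishing derivative, so $\varphi_i'(\varphi_i^m(x_i))$ (or its chart analogue at $\infty$) is a $\mathfrak{p}$-adic unit for all $m\ge M$. Therefore the orbit tail $\{\varphi_i^m(x_i)\}_{m\ge M}$ avoids the residue classes of attracting periodic points of $\varphi_i$, because such a class contains a reduced critical point in the orbit of the cycle. Applying Proposition \ref{prop:orbitAP} to the point $\Phi^M(x)$ over $\mathbb{C}_p$ shows that the return set is a finite union of arithmetic progressions. Shifting by $M$ and adding the finitely many $n<M$ as singleton progressions completes the proof.

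I expect the main obstacle to be the bookkeeping in the middle step. One must check that preperiodic critical points, and critical points in forward orbits of preperiodic elements of $\mathcal{S}_i$, fit the hypotheses of Theorem \ref{thm:augmented-main-intro}. If they do not fit directly, I would redo the fixed-point reduction of Section \ref{sec:proof-main-intro} with those points added to $\mathcal{A}_i\setminus\mathcal{B}_i$.
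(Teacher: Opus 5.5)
Your proposal is correct and matches the paper's proof, which just reruns the argument for Theorem~\ref{thm:DML} with Theorem~\ref{thm:augmented-main-intro} in place of Theorem~\ref{thm:main-intro}; your extra bookkeeping makes explicit what the paper leaves implicit, namely taking $\mathcal{B}_i$ to be the non-preperiodic part of $\mathcal{S}_i$ (which inherits almost linear-disjointness), placing the preperiodic critical points in $\mathcal{A}_i\setminus\mathcal{B}_i$, and applying Proposition~\ref{prop:orbitAP} to $\Phi^M(x)$ rather than to $x$. One small correction: a fixed coordinate $x_i$ is not irrelevant to the hypothesis of Proposition~\ref{prop:orbitAP}, since an attracting fixed $x_i$ would violate it, so the reduction to non-preperiodic $x_i$ should instead be justified by replacing $V$ with its slice at $y_i=x_i$ and dropping that coordinate.
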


\begin{proof}
    The proof is exactly analogous to the proof of Theorem \ref{thm:DML} in Section \ref{proof:DML} except that we employ Theorem \ref{thm:augmented-main-intro} where we previously used Theorem \ref{thm:main-intro}. 
\end{proof}

The following very specific case of the dynamical Mordell-Lang conjecture that follows unconditionally is worth stating. 

\begin{theorem}\label{thm:bicritical}
   Let $K$ be a number field, \(g \in\mathbb{N}\), \(V \subset (\mathbb{P}^{1})^{g}\) be a subvariety defined over \(K\), let \(x = (x_{1}, \ldots, x_{g})\in(\mathbb{P}^{1})^{g}(K)\). Moreover, let \(\Phi := (f, f, \ldots, f)\) act on \((\mathbb{P}^{1})^{g}\) coordinatewise, where \(f\in K[t]\) is a polynomial, all of whose critical points lie in the forward orbit of a single critical point. Then, the set of integers $n \in \mathbb{N}$ such that $\Phi^{n}(x) \in V(\overline{K})$ is a union of finitely many arithmetic progressions $\{nk + \ell\}_{n \in \mathbb{N}}$, where $k,\ell \ge 0$ are nonnegative integers.   
\end{theorem}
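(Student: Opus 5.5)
Theorem \ref{thm:bicritical} should follow as a special case of Theorem \ref{thm:augmented-DML}, with $\varphi_{1}=\cdots=\varphi_{g}=f$. Let $\mathcal{T}$ be the set of critical points of $f$, and let $c\in\mathcal{T}$ be the critical point whose forward orbit contains all the others. For each $i$ take $\mathcal{S}_{i}=\{c\}$, so that $\mathcal{T}\setminus\mathcal{S}_{i}\subseteq\mathcal{O}^{+}(c)$. The augmentation hypothesis of Theorem \ref{thm:augmented-DML} then holds. What remains is to check that $\{(f,\{c\})\}_{i=1}^{g}$ forms an almost linearly-disjoint system, and to deal with the point at infinity and with the case where $c$ is preperiodic.

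\textbf{Critical points.} Since $f$ is a polynomial, $\infty$ is a totally invariant critical point. Near it, $f$ is superattracting, and its residue class at any prime of good reduction is the residue class of $\infty$. I will treat $\infty$ separately. If $x_{i}=\infty$, that coordinate is fixed and can be absorbed. If $x_{i}\neq\infty$, then at a prime of good reduction the orbit of $x_{i}$ consists of $\mathfrak{p}$-integral points, so it never meets the residue class of $\infty$. Hence only the finite critical points matter, and these all lie in $\mathcal{O}^{+}(c)$. If $c$ is $f$-preperiodic, then every finite critical point is preperiodic and $\mathcal{B}_{i}=\emptyset$. In that case Theorem \ref{thm:augmented-main-intro} applies with the almost linear-disjointness condition vacuous, and I would argue this directly, since conditions (1)--(3) of Definition \ref{def:linearly-disjoint-system} hold trivially for empty $\mathcal{S}_{i}$.

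\textbf{Main obstacle: condition (3) for repeated copies.} Conditions (1) and (2) are easy for a single point $c$ that is not preperiodic. Linear disjointness of a single field is trivial. For (1) I would take $L=E$ and use the fact that $E_{m}(f,c)\neq E$ for large $m$. This holds because $f^{m}(x)-c$ has no root in $E$ for suitable $m$ by Proposition \ref{prop:derangement-in-large-enough-extensions}, or simply because $E$ is finite and the backward orbit of $c$ is infinite.

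The real difficulty is condition (3) when $g\ge2$. It asks that the $g$ identical fields $L_{m}(f,c)$ be linearly disjoint over $L$, which fails unless $L_{m}(f,c)=L$, contradicting (1). So Theorem \ref{thm:augmented-DML} cannot be applied literally with $g$ copies.

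My fix is to run the argument of Section \ref{sec:DML} with one map. The proof of Theorem \ref{thm:DML} only needs a positive-density set of primes $\mathfrak{p}$ with the following property: for each $i$, the orbit $f^{m}(x_{i})$ with $m\ge M$ avoids the critical residue classes. This is a statement about the single map $f$, with $\mathcal{T}=\{x_{1},\dots,x_{g}\}$ (after removing preperiodic $x_{i}$) and $\mathcal{A}=\mathcal{T}_{\mathrm{crit}}$. So I apply Theorem \ref{thm:augmented-main-intro} with $g=1$, $\varphi_{1}=f$, $\mathcal{B}_{1}=\{c\}$ augmented by the forward iterates of $c$ that are critical, and the set $\mathcal{T}_{1}$ collecting all the $x_{i}$. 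That theorem allows an arbitrary finite $\mathcal{T}_{1}$, so no disjointness is needed across coordinates.

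\textbf{Concluding the argument.} This yields $M$ and a positive-density set $\mathcal{P}$ of primes with three properties: each orbit $\{f^{m}(x_{i})\}_{m\ge M}$ misses every finite critical point modulo $\mathfrak{p}$; $f$ has good reduction at $\mathfrak{p}$; and $\operatorname{char}k_{\mathfrak{p}}>\deg f$. Then $f'(f^{m}(x_{i}))$ is a $\mathfrak{p}$-adic unit, so the orbit avoids the residue classes of attracting periodic points. Proposition \ref{prop:orbitAP}, applied to the tail $\Phi^{M}(x)$ with the first $M$ iterates absorbed as singleton progressions, gives the conclusion.
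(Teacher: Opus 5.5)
Your argument is correct, but it is not the deduction the paper intends. The paper gives no proof of Theorem \ref{thm:bicritical}; it presents it as the unconditional special case of Theorem \ref{thm:augmented-DML} obtained by taking $\mathcal{S}_i=\{c\}$ for every $i$. As you note, that hypothesis cannot hold for $g\ge 2$. With identical data, condition (3) of Definition \ref{def:linearly-disjoint-system} asks that $g$ copies of $L_m(f,c)$ be linearly disjoint over $L$. That forces $L_m(f,c)=L$, which contradicts condition (1).

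Your repair is what actually makes the deduction valid. You rerun Section \ref{sec:DML}, but produce the primes from the $g=1$ case of Theorem \ref{thm:augmented-main-intro}: take $\mathcal{A}_1=\mathcal{B}_1=\{c\}$, augment by its critical iterates, and put all non-preperiodic $x_i$ into the single set $\mathcal{T}_1$. The coordinates then interact only through the arbitrary finite set $\mathcal{T}_1$, and for a singleton the disjointness hypothesis reduces to condition (1). The paper's route would give a one-line corollary uniform with the distinct-map case; yours gives one whose hypotheses can actually be verified.

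A few points to tighten:
\begin{itemize}
\item Your ``infinite backward orbit'' justification of condition (1) needs Northcott. Points of $f^{-m}(c)$ have canonical height $\hat{h}(c)/d^{m}$ and are distinct across levels since $c$ is not periodic. So a fixed number field $E$ contains none of them once $m$ is large.
\item Your separate treatment of $\infty$ is right, and it covers something the paper leaves implicit. Integrality of the orbit requires excluding the finitely many $\mathfrak{p}$ at which $x_i$ is not $\mathfrak{p}$-integral.
\item You should also note that $c\in K$ when $c$ is not preperiodic. Each conjugate $\sigma(c)$ is a critical point, hence equals some $f^{r}(c)$. Since $\hat{h}(\sigma(c))=\hat{h}(c)>0$ while $\hat{h}(f^{r}(c))=d^{r}\hat{h}(c)$, we get $r=0$. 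Alternatively, simply enlarge $K$.
\end{itemize}

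Finally, you can bypass the paper's new machinery entirely. Let $R$ be the largest exponent with $f^{R}(c)$ critical. At a prime of good reduction, $f^{m}(x_i)\equiv f^{r}(c)\pmod{\mathfrak{p}}$ implies $f^{m+R-r}(x_i)\equiv f^{R}(c)\pmod{\mathfrak{p}}$, with $m+R-r\ge m$. So Theorem \ref{thm:BGHKST}, applied to the single non-preperiodic target $f^{R}(c)$ (together with the fixed point $\infty$), already supplies $M$ and $\mathcal{P}$. This is exactly the forward propagation underlying Section \ref{sec:augmentation}.
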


\section{Implications for Arboreal Galois Representations}\label{sec:Arboreal}
Let  \(\varphi : \mathbb{P}^{1}_{K} \longrightarrow \mathbb{P}^{1}_{K}\) be a rational map of degree \(d \geq 2\), \(\alpha\in\mathbb{P}^{1}_{K}\) and $\varphi^{-n}(\alpha)$ be the set of roots of $\varphi^{n}(x) - \alpha$ for \(n \geq 0\). One can define a graph on the set of vertices $T_{\varphi} = \bigsqcup_{n\geq 0} \varphi^{-n} (\alpha)$, with the rule that a vertex $B$ is a child of another vertex $A$ if and only if $\varphi(B) = A$. This graph is a complete rooted $d$-ary tree $T_{\varphi}$ if \(\alpha\) is not in the post-critical set of \(\varphi\), which we assume implicitly throughout this section. Then, the Galois group of the algebraic closure $\overline{K}/K$ acts on and preserves the tree-structure of $T_{\varphi}$. Therefore, one can define a morphism \[ \Phi_{\varphi} : \mathrm{Gal}\big(\overline{K}/K\big) \longrightarrow \mathrm{Aut}(T_{\varphi}) \] that is known as the \textit{arboreal Galois representation associated with \(\varphi\) and \(\alpha\)}. The image of this arboreal Galois representation, denoted by $G_{\infty}(\varphi, \alpha)$ can be seen as inverse limits of Galois groups \[G_{\infty}(\varphi, \alpha) := \mathrm{Gal}\big(K(\varphi^{-n}(\alpha))\big)/K.\] The study of arboreal Galois representations goes back to Odoni and Stoll in \cite{Odoni85, Odoni88, Odoni97, OdoniIterates, Stoll92} and readers can consult \cite{JonesSurvey} for a comprehensive survey regarding arboreal representations. 

The results in this article give rise to a pleasant interpretation and provides further insights in terms of \textit{multi-trees}, which are generalization of the \(d\)-ary rooted tree that are defined in the context of \textit{arboreal Galois representations}. Multi-tree interpretation of related results also appears in \cite{HanTucker} and we follow analogous definitions. First, we define these multi-trees  that arise from a single rational map and multiple elements. Suppose \(\varphi\) is of degree \(d \geq 2\) and \(\mathcal{A} = \{\alpha_{i}\}_{i=1}^{k}\in\mathbb{P}^{1}(K)\). For every \(n \geq 1\), let us define \[ \mathcal{M}_{n}(\varphi, \mathcal{A}) := \bigcup_{i=1}^{k} \Big( \bigcup_{i=0}^{n} \varphi^{-i}(\alpha_{i}) \Big) \quad \text{ and } \mathcal{G}_{n}(\varphi, \mathcal{A}) := \mathrm{Gal}\Big(\mathcal{M}_{n}(\varphi, \mathcal{A}) / K\Big). \] \(\mathcal{M}_{n}(\varphi, \mathcal{A}))\) can be naturally seen as the disjoint-union of \(k\) different trees, rooted at each \(\alpha_{i}\). Then, we analogously define \( \mathcal{M}_{\infty}(\varphi, \mathcal{A}) \) and \(\mathcal{G}_{\infty}(\varphi, \mathcal{A})\) respectively as the direct limit of \(\mathcal{M}_{n}(\varphi, \mathcal{A})\) and inverse limit of \(\mathcal{G}_{n}(\varphi, \mathcal{A})\), just as in the single-element case. Then, since each \(\mathcal{G}_{n}(\varphi, \mathcal{A})\) acts faithfully on each \(\mathcal{M}_{n}(\varphi, \mathcal{A})\), one has injections \(\mathcal{G}_{n}(\varphi, \mathcal{A}) \hookrightarrow \mathrm{Aut}\big(\mathcal{M}_{n}(\varphi, \mathcal{A})\big) \) for every \(n\) and hence an injection \(\mathcal{G}_{\infty}(\varphi, \mathcal{A}) \hookrightarrow \mathrm{Aut}\big(\mathcal{M}_{\infty}(\varphi, \mathcal{A})\big) \).  Then, Proposition \ref{prop:derangement-finite-set} has an interpretation as follows:

\begin{proposition}\label{prop:derangements-multitree}
    Let \(K\) be a number field, let \(\varphi:\mathbb{P}^{1}_{K}\longrightarrow\mathbb{P}^{1}_{K}\) be a rational function of degree at least 2, and let \(\mathcal{A} = \{\alpha_{i}\}_{i=1}^{k}\subset\mathbb{P}^{1}(K)\) be a subset consisting entirely of non-periodic points. Furthermore, assume that the set \(\mathcal{A}\) is almost \(\varphi\)-linearly disjoint. Then, there exists a positive \(n\) such that for every \(m \geq n\), the groups \(\mathcal{G}_{m}(\varphi, \mathcal{A})\) consists of a conjugacy class worth of derangements on the \(m^{th}\)-level \(\mathcal{M}_{m}(\varphi, \mathcal{A})\) of the infinite multi-tree \(\mathcal{M}_{\infty}(\varphi, \mathcal{A})\).
\end{proposition}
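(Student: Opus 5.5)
The plan is to translate Proposition \ref{prop:derangement-finite-set} into the language of the multi-tree, and then to make the conjugacy class live over \(K\) itself rather than over an auxiliary extension \(L^{(m)}\). Recall that \(\mathcal{G}_{m}(\varphi,\mathcal{A})\) is the Galois group of \(K_{m}(\varphi,\mathcal{A})=\prod_{i}K_{m}(\varphi,\alpha_{i})\) over \(K\). This group acts faithfully on \(\mathcal{M}_{m}(\varphi,\mathcal{A})\), and its action on the top level \(\bigcup_{i}\varphi^{-m}(\alpha_{i})\) is exactly the Galois action on the roots of \(\prod_{i}(\varphi^{m}(x)-\alpha_{i})\). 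So a derangement on the \(m\)-th level is precisely an element of \(\mathcal{G}_{m}(\varphi,\mathcal{A})\) that fixes no root of \(\varphi^{m}(x)-\alpha_{i}\), for any \(i\).

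\textbf{Step 1 (existence of a derangement over \(K\)).} Apply Theorem \ref{thm:multiperiodic}, which is a consequence of Proposition \ref{prop:derangement-finite-set}. It gives a positive density set of primes \(\mathfrak{p}\) of good reduction at which no \(\alpha_{i}\) is \(\varphi_{\mathfrak{p}}\)-periodic. We fix one such \(\mathfrak{p}\), chosen with \(\operatorname{char}k_{\mathfrak{p}}>\deg\varphi\) and unramified in the relevant fields.

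Since \(\alpha_{i}\) is not periodic modulo \(\mathfrak{p}\) and \(\mathbb{P}^{1}(k_{\mathfrak{p}})\) is finite, the backward orbit of \(\alpha_{i}\) inside \(\mathbb{P}^{1}(k_{\mathfrak{p}})\) terminates. Hence there is an \(n_{i}\) such that \(\varphi_{\mathfrak{p}}^{m}(x)=\alpha_{i}\) has no solution in \(\mathbb{P}^{1}(k_{\mathfrak{p}})\) for all \(m\geq n_{i}\). Indeed, a solution at level \(m\) yields solutions at every lower level, and if solutions existed at arbitrarily high levels, pigeonhole would force a cycle through \(\alpha_{i}\). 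Set \(n=\max_{i}n_{i}\).

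\textbf{Step 2 (from non-solvability to a derangement).} For \(m\geq n\), Lemma \ref{lem:nonper-modp}, applied with \(L=E\) and after enlarging \(n\) if necessary, shows that \(\mathfrak{p}\) is unramified in each \(K(\beta)\) with \(\varphi^{m}(\beta)=\alpha_{i}\). Alternatively, we discard the finitely many primes ramified in \(K_{m}(\varphi,\mathcal{A})\) by choosing \(\mathfrak{p}\) among the density-many primes supplied by Theorem \ref{thm:multiperiodic}, avoiding those dividing the relevant discriminants. The Frobenius \(\sigma\) at a prime above \(\mathfrak{p}\) in \(K_{m}(\varphi,\mathcal{A})/K\) fixes a root \(\beta\) exactly when \(\beta\) reduces to a \(k_{\mathfrak{p}}\)-rational solution. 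By Step 1 there is no such solution, so \(\sigma\) is a derangement of the \(m\)-th level, and so is its entire conjugacy class, since derangement is conjugation invariant.

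\textbf{The main obstacle} is uniformity in \(m\). The ramification locus of \(K_{m}(\varphi,\mathcal{A})\) grows with \(m\), so a single fixed \(\mathfrak{p}\) could ramify at some level. I would handle this in one of two ways. The first option is to restrict to primes \(\mathfrak{p}\) of good reduction that do not divide the (finitely many) post-critical data relevant to \(\mathcal{A}\), using the standard fact that ramification in \(K_{\infty}(\varphi,\alpha)\) occurs only at primes of bad reduction or primes where \(\alpha\) meets the critical orbit modulo \(\mathfrak{p}\). The alternative is to let \(\mathfrak{p}\) depend on \(m\): for each \(m\geq n\), pick any \(\mathfrak{p}\) from the positive-density set that is unramified in \(K_{m}(\varphi,\mathcal{A})\). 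This costs only finitely many primes, and the bound \(n\) from Step 1 must then be checked for that particular \(\mathfrak{p}\). To control this, I would instead take \(n\) from a single fixed good prime \(\mathfrak{p}_{0}\) and observe that a derangement at level \(n\) lifts to a derangement at every level \(m\geq n\). Any lift in \(\mathcal{G}_{m}\) of a derangement of level \(n\) is a derangement of level \(m\), because a fixed point at level \(m\) maps under \(\varphi^{m-n}\) to a fixed point at level \(n\). This lifting argument removes the uniformity issue entirely and is the version I would write up.
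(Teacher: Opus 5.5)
There is one genuine gap, in Step 2. Your Step 1 is correct: \(\mathfrak{p}_0\) comes from Theorem \ref{thm:multiperiodic}, and the pigeonhole bound \(n\le N\mathfrak{p}_0+1\) holds. Your lifting observation is also correct: a lift to \(\mathcal{G}_m\) of a level-\(n\) derangement is a level-\(m\) derangement. The problem is that Step 2 uses ``the Frobenius at \(\mathfrak{p}_0\)'' in \(K_n(\varphi,\mathcal{A})/K\), which needs \(\mathfrak{p}_0\) unramified there, and none of your justifications deliver this.

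\begin{itemize}
\item Lemma \ref{lem:nonper-modp} only says \(\mathfrak{r}\) is unramified over \(\mathfrak{q}\) in \(L(\beta)/L\) for \(L\supseteq E\). The auxiliary field \(E\) exists precisely to absorb the ramification of \(\mathfrak{p}\) at low levels, so nothing follows for \(K(\beta)/K\).
\item Non-periodicity modulo \(\mathfrak{p}\) does not prevent ramification. Take \(\varphi=x^{2}+1\), \(\alpha=8\), \(\mathfrak{p}=7\). The reduction \(\bar\alpha=1\) is not periodic (\(1\mapsto 2\mapsto 5\mapsto 5\)), yet \(K_{1}(\varphi,8)=\mathbb{Q}(\sqrt{7})\) ramifies at \(7\), and hence so does every \(K_{m}(\varphi,8)\).
\item Discarding primes that divide the level-\(n\) discriminants is circular. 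The level \(n\) is produced from \(\mathfrak{p}_0\), and changing the prime changes \(n\).
\item The ``finitely many post-critical data'' in your first option is not finite unless \(\varphi\) is post-critically finite. So the primes at which \(\alpha\) meets a critical orbit modulo \(\mathfrak{p}\) cannot simply be thrown away.
\item The lifting argument does not touch this issue. It removes uniformity in \(m\), but you still need one derangement at level \(n\).
\end{itemize}

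The missing idea is that unramifiedness is unnecessary. Take any prime \(\mathfrak{P}\) of \(K_{n}(\varphi,\mathcal{A})\) above \(\mathfrak{p}_0\). Take any \(\sigma\) in the decomposition group \(D(\mathfrak{P}/\mathfrak{p}_0)\) inducing \(x\mapsto x^{N\mathfrak{p}_0}\) on the residue field; such \(\sigma\) exists because \(D\) surjects onto the residue Galois group. Suppose \(\sigma(\beta)=\beta\) for some \(\beta\in\varphi^{-n}(\alpha_i)\). Reducing modulo \(\mathfrak{P}\) on \(\mathbb{P}^{1}\) gives \(\bar\beta=\bar\beta^{N\mathfrak{p}_0}\), so \(\bar\beta\in\mathbb{P}^{1}(k_{\mathfrak{p}_0})\). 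Good reduction then gives \(\varphi_{\mathfrak{p}_0}^{n}(\bar\beta)=\bar\alpha_i\), contradicting Step 1. In the example, the level-\(2\) roots reduce to square roots of \(-1\) in \(\mathbb{F}_{49}\setminus\mathbb{F}_{7}\), and every such \(\sigma\) moves them.

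With this one-line replacement your argument is complete, and it is genuinely different from the paper's. The paper gives no separate proof; it reads the statement off Proposition \ref{prop:derangement-finite-set}. There the derangements live in \(\mathrm{Gal}\big((L^{(m)})_{m}(\varphi,\mathcal{A})/L^{(m)}\big)\), over auxiliary fields and only for arbitrarily large \(m\). Reaching \(\mathcal{G}_m\) over \(K\) requires restricting those elements to \(K_m(\varphi,\mathcal{A})\), and reaching every \(m\ge n\) requires exactly your lifting step. Your route uses almost linear disjointness only through Theorem \ref{thm:multiperiodic}, gives an explicit \(n\), and shows the proposition is essentially equivalent to that theorem.
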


A similar result also follows analogously from Proposition \ref{prop:derangement-finite-set-system} in the context of what we call \textit{aboreal forest} associated to multiple rational maps and multiple points. Let \(\varphi_{1}, \ldots, \varphi_{g}: \mathbb{P}^{1}_{K} \longrightarrow \mathbb{P}^{1}_{K}\) be finitely many rational maps, each of degree at least $2$ and \(\mathcal{A}_{i} = \big\{\alpha_{ij} \big\}_{j=1}^{n_{i}}\subset\mathbb{P}^{1}(K)\) be finite subsets consisting entirely of points that are not \(\varphi_{i}\)-periodic, for every \(1 \leq i \leq g\). We define the \(n^{th}\)-level, \(\mathcal{F}_{n}\big(\{\varphi_{i}\}_{i=1}^{g}, \{\mathcal{A}_{i}\}_{i=1}^{g}\big)\), of the \textit{arboreal forest} associated to these given maps and sets as the disjoint union \[\bigsqcup_{i=1}^{g} \mathcal{M}_{n}(\varphi_{i}, \mathcal{A}_{i}) \] of multi-trees associated to respective \(\varphi_{i}\) and \(\mathcal{A}_{i}\). Then, we analogously define the infinite arboreal forest \(\mathcal{F}_{\infty}\big(\{\varphi_{i}\}_{i=1}^{g}, \{\mathcal{A}_{i}\}_{i=1}^{g}\big)\) as the direct limit of \(\mathcal{F}_{n}\big(\{\varphi_{i}\}_{i=1}^{g}, \{\mathcal{A}_{i}\}_{i=1}^{g}\big)\).  Since we have an injection \(\mathcal{G}_{\infty}(\varphi_{i}, \mathcal{A}_{i}) \hookrightarrow \mathrm{Aut}\big(\mathcal{M}_{\infty}(\varphi_{i}, \mathcal{A}_{i})\big) \) for every \(1 \leq i \leq g\), if \(\big\{(\varphi_{i}, \mathcal{A}_{i})\big\}_{i=1}^{g}\) forms an almost linearly-disjoint system then we also have injection \[ \prod_{i=1}^{g} \mathcal{G}_{n}(\varphi_{i}, \mathcal{A}_{i}) \hookrightarrow \prod_{i=1}^{g} \mathrm{Aut}\big(\mathcal{M}_{n}(\varphi_{i}, \mathcal{A}_{i})\big) \text{ for arbitrarily large } n. \] 

\begin{proposition}
    Let $K$ be a number field, \(g \in\mathbb{N}\) and \(\varphi_{1}, \ldots, \varphi_{g}: \mathbb{P}^{1}_{K} \longrightarrow \mathbb{P}^{1}_{K}\) be finitely many rational functions, each of degree at least $2$. For every \(1 \leq i \leq g\), let \(\mathcal{S}_{i} = \big\{\alpha_{ij} \big\}_{j=1}^{n_{i}}\subset\mathbb{P}^{1}(K)\) be a set consisting entirely of points that are not \(\varphi_{i}\)-periodic. Furthermore, suppose that the set \(\big\{(\varphi_{i}, \mathcal{S}_{i})\big\}_{i=1}^{g}\) forms an almost linearly-disjoint system. Then, there exists a positive \(n\) such that for every \(m \geq n\), the groups \(\mathcal{G}_{m}(\varphi_{i}, \mathcal{A}_{i})\) contains a conjugacy class of derangements on the \(m^{th}\) level \(\mathcal{F}_{m}\big(\{\varphi_{i}\}_{i=1}^{g}, \{\mathcal{A}_{i}\}_{i=1}^{g}\big)\) of the infinite arboreal forest \(\mathcal{F}_{\infty}\big(\{\varphi_{i}\}_{i=1}^{g}, \{\mathcal{A}_{i}\}_{i=1}^{g}\big)\)
\end{proposition}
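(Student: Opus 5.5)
The plan is to read this statement as a translation of Proposition \ref{prop:derangement-finite-set-system} into the language of arboreal forests. I will do the translation level by level and then deal with the quantifier ``for every \(m \geq n\)'', which is the one place where the statement asks for more than that proposition gives. Throughout, the relevant group is the Galois group of the compositum \(K_{m}(\{\varphi_{i}\}_{i=1}^{g}, \{\mathcal{S}_{i}\}_{i=1}^{g})\) over a suitable base, acting on \(\mathcal{F}_{m}\). A derangement is required on the top level \(\bigsqcup_{i}\bigsqcup_{j}\varphi_{i}^{-m}(\alpha_{ij})\). Any element deranging the top level also moves every vertex of lower level \(t<m\) that lies on a path to the top, since Galois elements commute with each \(\varphi_{i}\). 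So it suffices to work with the top level \(\varphi_{i}^{-m}(\alpha_{ij})\), exactly as in the earlier propositions.

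First, apply Proposition \ref{prop:derangement-finite-set-system}. It gives a finite extension \(E\) of \(K\) and, for arbitrarily large \(m\), extensions \(L=L^{(m)}\) of \(E\) over which three things hold.
\begin{enumerate}
\item The fields \(L_{m}(\varphi_{i},\mathcal{S}_{i})\) are linearly disjoint over \(L\).
\item The Galois group decomposes as the product \eqref{eq:direct-product-system}.
\item The group contains a conjugacy class \(\mathcal{C}_{1}\times\cdots\times\mathcal{C}_{g}\) of derangements of \(\bigcup_{i,j}\varphi_{i}^{-m}(\alpha_{ij})\).
\end{enumerate}
The injection \(\prod_{i}\mathcal{G}_{m}(\varphi_{i},\mathcal{S}_{i})\hookrightarrow\prod_{i}\mathrm{Aut}(\mathcal{M}_{m}(\varphi_{i},\mathcal{S}_{i}))\) recorded just before the statement identifies this product group with a subgroup of \(\mathrm{Aut}(\mathcal{F}_{m})\). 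The action is compatible with this identification, because the \(i\)-th factor acts only on the \(i\)-th multi-tree. This settles the statement for those \(m\) (with the group read over \(L^{(m)}\), as in Proposition \ref{prop:derangements-multitree}).

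Second, extend from ``arbitrarily large \(m\)'' to ``every \(m\ge n\)''. The key observation is that being a derangement passes upward along the tree. Suppose \(\sigma\in\mathrm{Gal}(\overline{K}/L)\) restricts to a derangement of level \(m\), and \(m'\ge m\). If \(\sigma\) fixed some \(\beta\in\varphi_{i}^{-m'}(\alpha_{ij})\), it would fix \(\varphi_{i}^{m'-m}(\beta)\in\varphi_{i}^{-m}(\alpha_{ij})\), a contradiction. So any lift of \(\sigma\) to the level-\(m'\) field deranges level \(m'\). The lifts of a conjugacy class form a union of conjugacy classes, and each of them consists of derangements. Hence it is enough to fix one good \(m_{0}\) and put \(n=m_{0}\). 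For \(m\ge n\), restriction is surjective from \(\mathrm{Gal}(L_{m}/L)\) onto \(\mathrm{Gal}(L_{m_{0}}/L)\), and the preimage of \(\mathcal{C}_{1}\times\cdots\times\mathcal{C}_{g}\) contains a conjugacy class of derangements. It is not necessary that level \(m\) itself be a product level.

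The main obstacle is the base field. Derangements exist over \(L^{(m_{0})}\), while the statement speaks of \(\mathcal{G}_{m}\) over \(K\). I would handle this by using the Galois group over \(L^{(m_{0})}\), regarded as a subgroup of \(\mathcal{G}_{m}\) over \(K\) after passing to Galois closures. An element that deranges level \(m\) is still a derangement when viewed inside the larger group, and its \(\mathcal{G}_{m}\)-conjugacy class again consists of derangements, since conjugation preserves the property of having no fixed points. This yields a conjugacy class of derangements in \(\mathcal{G}_{m}\) itself.
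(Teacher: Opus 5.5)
Your argument is correct and follows the route the paper intends. The paper gives no separate proof; it only says the statement follows from Proposition \ref{prop:derangement-finite-set-system}. Your two extra steps supply exactly what that citation leaves out, namely the passage from ``arbitrarily large \(m\), over \(L^{(m)}\)'' to ``every \(m\ge n\), over \(K\)'':
\begin{enumerate}
\item A derangement at one level \(m_0\) lifts to a derangement at every level \(m\ge m_0\), because Galois elements commute with each \(\varphi_i\).
\item Restriction \(\mathrm{Gal}(L_m/L)\hookrightarrow\mathrm{Gal}(K_m/K)\), with \(L_m=LK_m\), is injective and preserves the action on the top level. Conjugates of derangements are again derangements.
\end{enumerate}

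One correction concerns your opening claim that an element deranging the top level also moves every lower-level vertex. That is backwards: the implication only runs upward, from level \(m_0\) to higher levels. In fact each root \(\alpha_{ij}\in\mathbb{P}^{1}(K)\) is fixed by every Galois element. Consequently no element deranges all of \(\mathcal{F}_m\) as literally defined (levels \(0\) through \(m\)). The statement has to be read, as you in effect do, as asking for derangements of the top level \(\bigsqcup_{i,j}\varphi_i^{-m}(\alpha_{ij})\). With that reading, the false sentence should simply be dropped; nothing else in your argument depends on it.
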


\section{Future Directions}\label{sec:future}
There are non-periodic elements \(\alpha_{1}, \alpha_{2}\) for which the methods in this article are currently not sufficient to establish non-periodicity of both \(\alpha_{1}, \alpha_{2}\) modulo a positive density of primes. For example, if we take \(\varphi(x) = x^{2}\) and \(\alpha_{1} = 2, \alpha_{2} = 8\) over \(K = \mathbb{Q}\). Then, for every \(m \geq 1\), we have \[\mathbb{Q}_{m}(\varphi, 8) = \mathbb{Q}(\mu_{2^{m}}, 8^{1/2^{m}}) \subseteq \mathbb{Q}(\mu_{2^{m}}, 2^{1/2^{m}}) = \mathbb{Q}_{m}(\varphi, 2).\] In the language of derangements, in such cases, one would need to establish an analogue of Proposition \ref{prop:derangement-finite-set-system} that \(\mathrm{Gal}\big(L_{m}(\varphi, 2)/L_{m}\big)\) contains derangements of \(\varphi^{-m}(2) \cup \varphi^{-m}(8)\), for arbitrary large \(m\). Of course, in this specific case, an elementary examination of Galois groups yields the desired result. In particular, \textit{power maps}, \textit{Chebyshev maps} and \textit{Latt\'es maps} constitute somewhat \textit{special} cases in this regard. These maps arise as quotients of endomorphisms of one dimensional connected algebraic groups (see \cite[Chapter 6]{SilBook}) and that they are the only rational maps that have multipliers contained within a given number field (see \cite{Huguin}). 

The assumption of almost linear disjointness used in this article should be viewed as an \textit{idealized} expectation that apart from above special maps, \(L_{m}(\varphi, \alpha_{1}) \subseteq L_{m}(\varphi, \alpha_{2})\) can hold for all large enough \(m\) and infinitely many extensions \(L\) of \(K\) only because of a dynamical reason. In particular, either one of the \(\alpha_{1}, \alpha_{2}\) is exceptional or one lies in the forward orbit of the other. And, we have demonstrated in Section \ref{sec:augmentation} that all the results in this article can be obtained when the sets associated with an almost linearly-disjoint system are augmented in both the conditions. 

\singlespacing
\section*{Acknowledgements}
The author is very grateful to Thomas J. Tucker for reading an earlier draft of some results in this article and directing him to \cite{Fak14}, to Rafe Jones for an illuminating conversation regarding arboreal representations and directing him to \cite{HanTucker}, and to Robert L. Benedetto for an interesting conversation. The author is also grateful to the anonymous reviewer for several suggestions that enhanced the overall presentation of this article and for directing him to \cite{DMLBook, Xie17}.

\bibliographystyle{amsplain}
\bibliography{local_global}

@article{Perucca,
 author = {Perucca, Antonella},
 title = {Prescribing valuations of the order of a point in the reductions of abelian varieties and tori},
 fjournal = {Journal of Number Theory},
 journal = {J. Number Theory},
 issn = {0022-314X},
 volume = {129},
 number = {2},
 pages = {469--476},
 year = {2009},
 language = {English},
 doi = {10.1016/j.jnt.2008.07.004},
 zbMATH = {5505561},
 Zbl = {1166.14028}
}

@book{SilBook,
 author = {Silverman, Joseph H.},
 title = {The arithmetic of dynamical systems},
 fseries = {Graduate Texts in Mathematics},
 series = {Grad. Texts Math.},
 issn = {0072-5285},
 volume = {241},
 isbn = {978-0-387-69903-5},
 year = {2007},
 publisher = {New York, NY: Springer},
 language = {English},
 zbMATH = {5134076},
 Zbl = {1130.37001}
}

@article{Huguin,
 author = {Huguin, Valentin},
 title = {Rational maps with rational multipliers},
 fjournal = {Journal de l'{\'E}cole Polytechnique -- Math{\'e}matiques},
 journal = {J. {\'E}c. Polytech., Math.},
 issn = {2429-7100},
 volume = {10},
 pages = {591--599},
 year = {2023},
 language = {English},
 doi = {10.5802/jep.227},
 zbMATH = {7680768},
 Zbl = {1517.37094}
}

@article{Odoni85,
 author = {Odoni, R. W. K.},
 title = {On the prime divisors of the sequence {{\(w_{n+1}=1+w_ 1{{\dots}} w_ n\)}}},
 fjournal = {Journal of the London Mathematical Society. Second Series},
 journal = {J. Lond. Math. Soc., II. Ser.},
 issn = {0024-6107},
 volume = {32},
 pages = {1--11},
 year = {1985},
 doi = {10.1112/jlms/s2-32.1.1},
 zbMATH = {3916364},
 Zbl = {0574.10020}
}

@article{Odoni88,
 author = {Odoni, R. W. K.},
 title = {Realising wreath products of cyclic groups as {Galois} groups},
 fjournal = {Mathematika},
 journal = {Mathematika},
 issn = {0025-5793},
 volume = {35},
 number = {1},
 pages = {101--113},
 year = {1988},
 doi = {10.1112/S002557930000632X},
 zbMATH = {4081685},
 Zbl = {0662.12010}
}

@article{Odoni97,
 author = {Odoni, R. W. K.},
 title = {On the {Galois} groups of iterated generic additive polynomials},
 fjournal = {Mathematical Proceedings of the Cambridge Philosophical Society},
 journal = {Math. Proc. Camb. Philos. Soc.},
 issn = {0305-0041},
 volume = {121},
 number = {1},
 pages = {1--6},
 year = {1997},
 doi = {10.1017/S0305004196001168},
 zbMATH = {1066712},
 Zbl = {0917.12001}
}

@article{Juul,
 author = {Juul, Jamie},
 title = {Backward orbits of critical points},
 fjournal = {Journal of Number Theory},
 journal = {J. Number Theory},
 issn = {0022-314X},
 volume = {256},
 pages = {23--36},
 year = {2024},
 doi = {10.1016/j.jnt.2023.09.006},
 zbMATH = {7794603},
 Zbl = {1535.37112}
}

@incollection{JonesSurvey,
 author = {Jones, Rafe},
 title = {Galois representations from pre-image trees: an arboreal survey},
 booktitle = {Actes de la conf\'erence ``Th\'eorie des nombres et applications''},
 isbn = {978-2-84867-472-8},
 pages = {107--136},
 year = {2013},
 publisher = {Besan{\c{c}}on: Presses Universitaires de Franche-Comt{\'e}},
 url = {pmb.univ-fcomte.fr/2013/Jones.pdf},
 zbMATH = {6308079},
 Zbl = {1307.11069}
}

@misc{HanTucker,
 author = {Han, Minsik and Tucker, Thomas J.},
 title = {Finite index theorems for iterated {Galois} groups of preperiodic points for unicritical polynomials},
 year = {2025},
 howpublished = {Preprint, {arXiv}:2508.00266 [math.{NT}] (2025)},
 url = {https://arxiv.org/abs/2508.00266},
 arXiv = {arXiv:2508.00266}
}

@article{BGKT12,
 author = {Benedetto, Robert L. and Ghioca, Dragos and Kurlberg, P{\"a}r and Tucker, Thomas J.},
 title = {A case of the dynamical {Mordell}-{Lang} conjecture, With an appendix by {Umberto Zannier}},
 fjournal = {Mathematische Annalen},
 journal = {Math. Ann.},
 issn = {0025-5831},
 volume = {352},
 number = {1},
 pages = {1--26},
 year = {2012},
 doi = {10.1007/s00208-010-0621-4},
 zbMATH = {6005026},
 Zbl = {1285.37021}
}

@article{BGHKST,
 author = {Benedetto, Robert L. and Ghioca, Dragos and Hutz, Benjamin and Kurlberg, P{\"a}r and Scanlon, Thomas and Tucker, Thomas J.},
 title = {Periods of rational maps modulo primes},
 fjournal = {Mathematische Annalen},
 journal = {Math. Ann.},
 issn = {0025-5831},
 volume = {355},
 number = {2},
 pages = {637--660},
 year = {2013},
 doi = {10.1007/s00208-012-0799-8},
 zbMATH = {6133902},
 Zbl = {1317.37111}
}

@article{Fak14,
 author = {Fakhruddin, Najmuddin},
 title = {The algebraic dynamics of generic endomorphisms of {{\(\mathbb{P}^n\)}}},
 fjournal = {Algebra \& Number Theory},
 journal = {Algebra Number Theory},
 issn = {1937-0652},
 volume = {8},
 number = {3},
 pages = {587--608},
 year = {2014},
 doi = {10.2140/ant.2014.8.587},
 zbMATH = {6322089},
 Zbl = {1317.37116}
}

@book{FrJar,
 author = {Fried, M. D. and Jarden, M.},
 title = {Field arithmetic},
 edition = {2nd revised and enlarged ed.},
 fseries = {Ergebnisse der Mathematik und ihrer Grenzgebiete. 3. Folge},
 series = {Ergeb. Math. Grenzgeb., 3. Folge},
 issn = {0071-1136},
 volume = {11},
 isbn = {3-540-22811-X},
 year = {2005},
 publisher = {Berlin: Springer},
 zbMATH = {2120947},
 Zbl = {1055.12003}
}

@article{GT09,
    author = {Ghioca, Dragos and Tucker, Thomas J.},
    title = {Periodic points, linearizing maps, and the dynamical {M}ordell-{L}ang problem},
    journal = {J. Number Theory},
    volume = {129},
    number = {6},
    pages = {1392--1403},
    year = {2009}
}

@article{GTZ07,
    author = {Guralnick, Robert M. and Tucker, Thomas J. and Zieve, Michael E.},
    title = {Exceptional covers and bijections on rational points},
    journal = {Int. Math. Res. Not. IMRN},
    number = {1},
    pages = {Art. ID rnm004, 20},
    year = {2007}
}

@article{GTZ08,
    author = {Ghioca, Dragos and Tucker, Thomas J. and Zieve, Michael E.},
    title = {Intersections of polynomial orbits, and a dynamical {M}ordell-{L}ang conjecture},
    journal = {Invent. Math.},
    volume = {171},
    number = {2},
    pages = {463--483},
    year = {2008}
}

@article{Jon08,
    author = {Jones, Rafe},
    title = {The density of prime divisors in the arithmetic dynamics of quadratic polynomials},
    journal = {J. Lond. Math. Soc. (2)},
    volume = {78},
    number = {2},
    pages = {523--544},
    year = {2008}
}

@article{MS94,
    author = {Morton, Patrick and Silverman, Joseph H.},
    title = {Rational periodic points of rational functions},
    journal = {Internat. Math. Res. Notices},
    number = {2},
    pages = {97--110},
    year = {1994}
}

@article{Stoll92,
 author = {Stoll, Michael},
 title = {Galois group over {{\(\mathbb{Q}\)}} of some iterated polynomials},
 fjournal = {Archiv der Mathematik},
 journal = {Arch. Math.},
 issn = {0003-889X},
 volume = {59},
 number = {3},
 pages = {239--244},
 year = {1992},
 doi = {10.1007/BF01197321},
 zbMATH = {119856},
 Zbl = {0758.11045}
}

@article{OdoniIterates,
    author = {Odoni, R. W. K.},
    title = {The {G}alois theory of iterates and composites of polynomials},
    journal = {Proc. London Math. Soc. (3)},
    volume = {51},
    number = {3},
    pages = {385--414},
    year = {1985}
}

@article{Pin04,
    author = {Pink, Richard},
    title = {On the order of the reduction of a point on an abelian variety},
    journal = {Math. Ann.},
    volume = {330},
    number = {2},
    pages = {275--291},
    year = {2004}
}

@article{Sil93,
    author = {Silverman, Joseph H.},
    title = {Integer points, {D}iophantine approximation, and iteration of rational maps},
    journal = {Duke Math. J.},
    volume = {71},
    number = {3},
    pages = {793--829},
    year = {1993}
}

@book{Sil09,
 author = {Silverman, Joseph H.},
 title = {The arithmetic of elliptic curves},
 edition = {2nd ed.},
 fseries = {Graduate Texts in Mathematics},
 series = {Grad. Texts Math.},
 issn = {0072-5285},
 volume = {106},
 isbn = {978-0-387-09493-9; 978-0-387-09494-6},
 year = {2009},
 publisher = {New York, NY: Springer},
 doi = {10.1007/978-0-387-09494-6},
 zbMATH = {5549721},
 Zbl = {1194.11005}
}

@article{SL96,
    author = {Stevenhagen, Peter and Lenstra, Hendrik W., Jr.},
    title = {{C}hebotarev and his density theorem},
    journal = {Math. Intelligencer},
    volume = {18},
    number = {2},
    pages = {26--37},
    year = {1996}
}

@article{Faltings,
 author = {Faltings, G.},
 title = {Finiteness theorems for abelian varieties over number fields.},
 fjournal = {Inventiones Mathematicae},
 journal = {Invent. Math.},
 issn = {0020-9910},
 volume = {73},
 pages = {349--366},
 year = {1983},
 doi = {10.1007/BF01388432},
 url = {https://eudml.org/doc/143051},
 zbMATH = {3944027},
 Zbl = {0588.14026}
}

@article{McQuillan,
 author = {McQuillan, Michael},
 title = {Division points on semi-abelian varieties},
 fjournal = {Inventiones Mathematicae},
 journal = {Invent. Math.},
 issn = {0020-9910},
 volume = {120},
 number = {1},
 pages = {143--159},
 year = {1995},
 doi = {10.1007/BF01241125},
 url = {https://eudml.org/doc/144272},
 zbMATH = {746072},
 Zbl = {0848.14022}
}

@article{Vojta,
 author = {Vojta, Paul},
 title = {Integral points of subvarieties of semiabelian varieties. {II}},
 fjournal = {American Journal of Mathematics},
 journal = {Am. J. Math.},
 issn = {0002-9327},
 volume = {121},
 number = {2},
 pages = {283--313},
 year = {1999},
 doi = {10.1353/ajm.1999.0014},
 url = {muse.jhu.edu/journals/american_journal_of_mathematics/toc/ajm121.2.html},
 zbMATH = {1275702},
 Zbl = {1018.11027}
}

@book{Xie17,
 author = {Xie, Junyi},
 title = {The dynamical {Mordell}-{Lang} conjecture for polynomial endomorphisms of the affine plane},
 fseries = {Ast{\'e}risque},
 series = {Ast{\'e}risque},
 issn = {0303-1179},
 volume = {394},
 isbn = {978-2-85629-869-5},
 year = {2017},
 publisher = {Paris: Soci{\'e}t{\'e} Math{\'e}matique de France (SMF)},
 language = {English},
 zbMATH = {6832236},
 Zbl = {1509.37136}
}

@book{DMLBook,
 author = {Bell, Jason P. and Ghioca, Dragos and Tucker, Thomas J.},
 title = {The dynamical {Mordell}-{Lang} conjecture},
 fseries = {Mathematical Surveys and Monographs},
 series = {Math. Surv. Monogr.},
 issn = {0076-5376},
 volume = {210},
 isbn = {978-1-4704-2408-4; 978-1-4704-2908-9},
 year = {2016},
 publisher = {Providence, RI: American Mathematical Society (AMS)},
 language = {English},
 doi = {10.1090/surv/210},
 zbMATH = {6583248},
 Zbl = {1362.11001}
}

@article{Amerik11,
 author = {Amerik, Ekaterina},
 title = {Existence of non-preperiodic algebraic points for a rational self-map of infinite order},
 fjournal = {Mathematical Research Letters},
 journal = {Math. Res. Lett.},
 issn = {1073-2780},
 volume = {18},
 number = {2},
 pages = {251--256},
 year = {2011},
 language = {English},
 doi = {10.4310/MRL.2011.v18.n2.a5},
 zbMATH = {6026553},
 Zbl = {1241.14011}
}

@article{etaleDML,
 author = {Bell, J. P. and Ghioca, D. and Tucker, T. J.},
 title = {The dynamical {Mordell}-{Lang} problem for {\'e}tale maps},
 fjournal = {American Journal of Mathematics},
 journal = {Am. J. Math.},
 issn = {0002-9327},
 volume = {132},
 number = {6},
 pages = {1655--1675},
 year = {2010},
 language = {English},
 doi = {10.1353/ajm.2010.0014},
 zbMATH = {5834745},
 Zbl = {1230.37112}
}

\end{document}